\documentclass[10pt]{amsart}
\usepackage[margin=1.5in]{geometry}
\usepackage{xcolor}
\usepackage{xcolor}
\usepackage{commath}
\newtheorem{theorem}{Theorem}[section]
\newtheorem{lemma}[theorem]{Lemma}
\newtheorem{proposition}{Proposition}[section]

\theoremstyle{definition}
\newtheorem{definition}[theorem]{Definition}

\usepackage{cite}

\theoremstyle{remark}
\newtheorem{remark}[theorem]{Remark}

\numberwithin{equation}{section}



\begin{document}
\title[The Shakhov model near a global Maxwellian]{The Shakhov model near a global Maxwellian}

\author{Gi-Chan Bae}
\address{Research institute of Mathematics, Seoul National University, Seoul 08826, Republic of Korea }
\email{gcbae02@snu.ac.kr}

\author{Seok-Bae Yun}
\address{Department of mathematics, Sungkyunkwan University, Suwon 16419, Republic of Korea }
\email{sbyun01@skku.edu}



\keywords{}

\begin{abstract}
	Shakhov model is a relaxation approximation of the Boltzmann equation proposed to overcome the deficiency of the original BGK model, namely, the incorrect production of the Prandtl number.
	In this paper, we address the existence and the asymptotic stability of the Shakhov model when the initial data is a small perturbation of global equilibrium. 
	We derive a dichotomy in the coercive estimate of the linearized relaxation operator between zero and non-zero Prandtl number, and observe that the linearized relaxation operator is more degenerate in the former case. To remove such degeneracy and recover the full coercivity, we consider a micro-macro system that involves an additional non-conservative quantity related to the heat flux.
\end{abstract}
\maketitle
\section{Introduction}
\subsection{The Shakhov model}
The fundamental model describing the dynamics of rarefied gases at the mesoscopic level is the Boltzmann equation. 
But the complicated structure and the high dimensionality have long hindered the practical application of the Boltzmann equation.
In this regard, the model equation introduced in \cite{BGK,MR0062041},  has been popularly used to study various flow problems in place of the Boltzmann equation. 
However, it was soon revealed that this model, which goes by the BGK model, cannot achieve the correct Navier-Stokes limit in
that the Prandtl number computed in the hydrodynamic limit is incorrect.

There have been two major remedies to overcome this drawback.
The first such effort goes back to Holway \cite{MR0204022}, who extended the local Maxwellian into an ellipsoidal Gaussian 
to obtain an additional degree of freedom in computing the transport coefficients.
On the other hand, Shakhov \cite{shakhov1968generalization} suggested a way to get the correct Prandtl number by multiplying the Maxwellian with
an extra term that adjusts the heat flux while leaving the collision invariant untouched. The price to pay for this adjustment is that the H-theorem holds only when the flow remains close to the fluid regime. Even with such apparent defect of the model, the Shakhov model has been
widely used in various fields of rarefied gas dynamics, since it reproduces satisfactory qualitative features of the Boltzmann dynamics in many important flow problems \cite{cai2012nr,MR3389025,MR2327060,MR3943439,MR3925938,zheng2005ellipsoidal}. 
Especially, due to the non-triviality of the heat flux compared to that of the original BGK, it is reported that the Shakhov
model works better for non-isothermal flows \cite{MR3925938,MR3389025}.  To the best of the authors' knowledge, however, the existence of this important kinetic model has never been studied in the mathematical literature, which is the main motivation of the current work.

More precisely, we consider in this paper the following initial value problem of the Shakhov model:
\begin{align}\label{Shakhov}
\begin{split}
\partial_tF+v\cdot \nabla_xF &= \frac{1}{\tau}(\mathcal{S}_{Pr}(F)-F), \cr
F(x,v,0) &= F_0(x,v).
\end{split}
\end{align}
The unknown $F:\mathbb{T}^3\times\mathbb{R}^3\times\mathbb{R}_+\rightarrow\mathbb{R}$ is called the velocity distribution function where $F(x,v,t)$ is the number density of molecules in the phase space on the phase point $(x,v)$ at time $t$. We define the macroscopic density $\rho$, bulk velocity $U$, temperature $T$, stress tensor $\Theta$, and the heat flux $q$ as follows:
\begin{align}\label{macro quantity}
\begin{split}
\rho(x,t)&=\int_{\mathbb{R}^3}F(x,v,t)dv,\cr
\rho(x,t) U(x,t)&=\int_{\mathbb{R}^3}F(x,v,t)vdv,\cr
3\rho(x,t) T(x,t)&=\int_{\mathbb{R}^3}F(x,v,t)|v-U(x,t)|^2dv,\cr
\rho\Theta_{ij}(x,t)&=\int_{\mathbb{R}^3}F(x,v,t)(v_i-U_i(x,t))(v_j-U_j(x,t)) dv,\cr
q(x,t) &=\int_{\mathbb{R}^3}F(x,v,t)(v-U(x,t))|v-U(x,t)|^2dv.
\end{split}
\end{align}
The Shakhov operator is defined as
\begin{align}\label{Shakhov operator}
	\begin{split}
		\mathcal{S}_{Pr}(F)(x,v,t)=\mathcal{M}(F)\left[1+\frac{1-Pr}{5}\frac{q(x,t)\cdot (v-U(x,t))}{\rho(x,t) T(x,t)^2}\left(\frac{|v-U(x,t)|^2}{2T(x,t)}-\frac{5}{2}\right)\right],
	\end{split}
\end{align}
where $Pr$ is the Prandtl number, $\mathcal{M}(F)$ is the standard local Maxwellian:
\begin{align*}
	\mathcal{M}(F)(x,v,t)= \frac{\rho(x,t)}{\sqrt{2\pi T(x,t)}^3}\exp\left(-\frac{|v-U(x,t)|^2}{2T(x,t)}\right).
\end{align*}
Although the stress tensor $\Theta$ does not explicitly appears in the definition (\ref{Shakhov operator}), except through the relation:
\begin{align}\label{Ttheta}
3T=\sum_{1\leq i \leq 3 }\Theta_{ii},
\end{align}
we listed it in (\ref{macro quantity}) since it will be crucially used in the analysis later.
The relaxation time  $\tau$ takes the following form \cite{MR2327060,cai2012nr,pantazis2010heat,MR2779616}:
\begin{align}\label{tau}
\frac{1}{\tau} = \frac{1}{\tau_0}\rho^{\eta} T^w,
\end{align}
for some positive constant $\tau_0$ and $\eta\geq 0$, $w\in\mathbb{R}$.

The Shakhov operator satisfies the following identities by construction (See Appendix.):
\begin{align*}
\int_{\mathbb{R}^3} \mathcal{S}_{Pr}(F)(x,v,t)\left(\begin{array}{c}1\cr v \cr |v|^2\end{array}\right)dv = \int_{\mathbb{R}^3} F(x,v,t)\left(\begin{array}{c}1\cr v \cr |v|^2\end{array}\right)dv,
\end{align*}
which implies the conservation laws of the total mass, momentum, and energy:
\begin{align}\label{conserv}
\begin{split}
&\frac{d}{dt}\int_{\mathbb{T}^3\times\mathbb{R}^3} F(x,v,t)dvdx = 0, \cr
&\frac{d}{dt}\int_{\mathbb{T}^3\times\mathbb{R}^3} F(x,v,t)vdvdx= 0, \cr
&\frac{d}{dt}\int_{\mathbb{T}^3\times\mathbb{R}^3} F(x,v,t)|v|^2dvdx= 0.
\end{split}
\end{align}
The $H$-theorem is proved only when the $F$ is sufficiently close to the equilibrium in \cite{shakhov1968generalization} (See Appendix.): 
\begin{align}\label{Hthm}
\frac{d}{dt}\int_{\mathbb{T}^3\times\mathbb{R}^3} F\ln F dvdx\leq 0.
\end{align}

We note that the Shakhov relaxation operator satisfies
\begin{align}\label{rq}
\int_{\mathbb{R}^3}(\mathcal{S}_{Pr}(F)-F)(v_i-U_i)|v-U|^2dv = -Pr q_i(x,t).
\end{align}

This additional cancellation property  explains why the Shakhov model has a bigger degeneracy in this case in the vanishing Prantl number regime (See Proposition \ref{dichotomy}). \\
Finally, we mention that the Shakhov model is a generalization of the BGK model in the sense that it reduces to the original BGK model when $Pr=1$.

\subsection{Main results}

In this paper, we consider the existence and asymptotic behavior of (\ref{Shakhov}) when the initial data is close enough to the normalized global Maxwellian:
\begin{align}\label{global}
m(v)= \frac{1}{\sqrt{(2\pi)^3}}e^{-\frac{|v|^2}{2}}.
\end{align}
We define the perturbation $f$ by $F = m + \sqrt{m}f$, and derive following equation for $f$ from \eqref{Shakhov}:
\begin{align}\label{perturb1}
\begin{split}
\partial_tf+v\cdot \nabla_xf &= \frac{1}{\tau_0}L_{Pr}f+\Gamma(f), \cr
f(x,v,0)&=f_0(x,v).
\end{split}
\end{align}
Here, $f_0(x,v)= (F_0(x,v)-m) /\sqrt{m}$,  $L_{Pr}$, and $\Gamma(f)$ are the linear part and the non-linear part of the linearized relaxation operator (See Section 2 for precise definitions). 

We define the energy functional:
\begin{align}\label{energy}
\mathcal{E}(f)(t) = \frac{1}{2}\sum_{|\alpha|+|\beta|\leq N }\|\partial^{\alpha}_{\beta}f(t)\|_{L^2_{x,v}}^2+\sum_{|\alpha|+|\beta|\leq N}\int_{0}^{t}\|\partial^{\alpha}_{\beta}f(s)\|^2_{L^2_{x,v}}ds,
\end{align}
where $\|\cdot\|_{L^2_{x,v}}$ is the standard $L^2$ norm:
\begin{align*} \|f\|^2_{L^2_{x,v}}=\int_{\mathbb{T}^3\times\mathbb{R}^3}|f(x,v)|^2dvdx,
\end{align*}
and the multi-indices notation was employed for the differential operator: 
\[\partial^{\alpha}_{\beta} = \partial_t^{\alpha_0}\partial_{x_1}^{\alpha_1}\partial_{x_2}^{\alpha_2}\partial_{x_3}^{\alpha_3}\partial_{v_1}^{\beta_1}\partial_{v_2}^{\beta_2}\partial_{v_3}^{\beta_3},\]
with $\alpha =(\alpha_0,\alpha_1,\alpha_2,\alpha_3)$,  $\beta =(\beta_1,\beta_2,\beta_3)$.

We now state the main result of this paper.

\begin{theorem}\label{theorem} Let $N\geq 3$, and fix $Pr\geq0$. Assume  the initial data satisfies $F_0(x,v)=m+\sqrt{m}f_0(x,v)\geq 0 $, and shares the same total mass, momentum and energy with the global equilibrium $m(v)$:
\begin{align*}
\int_{\mathbb{T}^3\times\mathbb{R}^3} F_0(x,v)\left(\begin{array}{c}1\cr v \cr |v|^2\end{array}\right)dvdx = \int_{\mathbb{T}^3\times\mathbb{R}^3} m(v)\left(\begin{array}{c}1\cr v \cr |v|^2\end{array}\right)dvdx.
\end{align*}
In the case of $Pr=0$, we assume further that the total third moment of the initial data vanishes:
\[\int_{\mathbb{T}^3\times\mathbb{R}^3} F_0(x,v)v|v|^2dvdx= 0.\] 
Then, there exists $M$ such that if $\mathcal{E}(f_0) \leq M$, then there exists the unique global-in-time classical solution $f$ to \eqref{perturb1} satisfying 
\begin{enumerate}
\item The distribution function $F$ and the Shakhov operator are non-negative for $t \geq 0$: 
\begin{align*}
F(x,v,t)=m+\sqrt{m}f(x,v,t) \geq 0, \quad \mathcal{S}_{Pr}(F)(x,v,t)\geq 0,
\end{align*}
and satisfies the conservation laws \eqref{conservf}.
\item The energy functional is uniformly bounded: 
\begin{align*}
\sup_{t \geq 0}\mathcal{E}(f)(t) \leq C \mathcal{E}(f_0).
\end{align*}
\item The perturbation $f$ decays exponentially fast: 
\begin{align*}
\sum_{|\alpha|+|\beta|\leq N }\|\partial^{\alpha}_{\beta}f(t)\|_{L^2_{x,v}} \leq Ce^{-\delta t},
\end{align*}
for some positive constant $C>0$ and $\delta>0$.
\item Let $f$ and $\bar{f}$ be solutions corresponding to initial data $f_0$ and $\bar{f}_0$, respectively satisfying $\mathcal{E}(f_0)\leq M$ and $\mathcal{E}(\bar{f}_0)\leq M$. Then there exists positive constant $C$  satisfying the following $L^2$ stability estimate:
\begin{align*}
\|f(t)-\bar{f}(t)\|_{L^2_{x,v}} \leq C\|f_0-\bar{f}_0\|_{L^2_{x,v}}.
\end{align*}
\end{enumerate}
\end{theorem}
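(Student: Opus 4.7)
The plan is to apply a Guo-style nonlinear energy method, adapted to the Shakhov relaxation structure, and built around the dichotomy already announced in Proposition~\ref{dichotomy}. First I would set up local existence for \eqref{perturb1} through an iteration scheme
\begin{align*}
\partial_t f^{n+1} + v\cdot\nabla_x f^{n+1} = \tfrac{1}{\tau_0}L_{Pr}f^{n+1} + \Gamma(f^n),
\end{align*}
proving propagation of the energy bound $\mathcal{E}(f^n) \le 2M$ and contraction in a lower-order norm. Positivity of $F$ and of $\mathcal{S}_{Pr}(F)$ is then obtained from a mild formulation with the loss term $F/\tau$ absorbed into an integrating factor; because the bracket in \eqref{Shakhov operator} is a smooth perturbation of $1$, smallness of $\mathcal{E}(f)$ forces $\rho, T$ to stay close to $1$ and the bracket to stay bounded away from $0$, which keeps $\mathcal{S}_{Pr}(F)\ge 0$ throughout the iteration.

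The heart of the argument is the a priori estimate. For $Pr>0$, one expects the usual coercivity $-\langle L_{Pr}f,f\rangle \gtrsim \|(I-P)f\|^2_{L^2_v}$, where $P$ is the projection onto $\mathrm{span}\{\sqrt m, v\sqrt m, |v|^2\sqrt m\}$. The identity \eqref{rq} shows that when $Pr=0$ the collision operator gains an extra null direction in the heat-flux direction, so the kernel of $L_{0}$ enlarges to include the mode $(|v|^2-5)v\sqrt m$; this forces the micro-macro decomposition in the $Pr=0$ case to carry a fourth macroscopic coefficient $d(x,t)$ attached to this heat-flux mode, i.e.
\begin{align*}
Pf = \bigl\{a + b\cdot v + c\,|v|^2 + d\cdot v(|v|^2-5)\bigr\}\sqrt m.
\end{align*}
Taking moments of \eqref{perturb1} against $1, v, |v|^2, v(|v|^2-5)$ produces local balance laws for $(a,b,c)$ together with a (non-conservative) evolution equation for $d$; the microscopic dissipation, after integration by parts in $x$ and $t$ together with the transport term $v\cdot\nabla_x$, controls $\int_0^t\|\nabla_x(a,b,c,d)\|_{L^2_x}^2\,ds$. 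The zero-momentum and zero-energy assumptions together with the extra hypothesis $\int F_0 v|v|^2\,dvdx = 0$ in the $Pr=0$ case yield zero spatial averages of $(a,b,c,d)$, so Poincaré upgrades $\|\nabla_x(a,b,c,d)\|$ to $\|(a,b,c,d)\|$, completing the coercive control of $Pf$.

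With coercivity in hand I would run the standard $\partial^\alpha_\beta$ energy estimates for $|\alpha|+|\beta|\le N$, combining the microscopic dissipation from $L_{Pr}$ with the macroscopic dissipation above, and absorbing the nonlinear term via $|\langle \partial^\alpha\Gamma(f),\partial^\alpha f\rangle| \lesssim \sqrt{\mathcal{E}(f)}\,\mathcal{D}(f)$, where $\mathcal{D}$ denotes the full dissipation. A continuity argument with $\sqrt{\mathcal{E}(f_0)}$ small closes the bound $\sup_t \mathcal{E}(f)\le C\mathcal{E}(f_0)$; since $\mathcal{D} \gtrsim \|f\|^2_{H^N_{x,v}}$ once the macroscopic piece is recovered, a Gr\"onwall argument yields the exponential decay. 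The $L^2$ stability estimate is a clean repetition of the linear energy identity applied to $f-\bar f$, using the smoothness of $\mathcal{S}_{Pr}$ in $F$ to get Lipschitz control of the nonlinearity.

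The step I expect to be the main obstacle is the $Pr=0$ macroscopic estimate for the heat-flux coefficient $d$. Unlike $(a,b,c)$, it is not conserved, and the moment equation for $d$ receives contributions both from the flux $\int \Gamma(f) v(|v|^2-5)\,dv$ and from the transport coupling with $(I-P)f$, so one has to argue carefully that the non-dissipative right-hand side does not dominate the dissipation extracted from $\|(I-P)f\|$; the vanishing third-moment hypothesis is used precisely here to ensure $\int_{\mathbb{T}^3}d\,dx\equiv 0$ and thereby to apply Poincar\'e. Managing the nonlinear operator $\Gamma(f)$, whose explicit form includes rational expressions in $\rho$ and $T$ coming from the factor $q/(\rho T^2)$ in \eqref{Shakhov operator}, also requires a bootstrap to keep $\rho$ and $T$ uniformly bounded away from zero, but this follows from Sobolev embedding once $N\ge 3$.
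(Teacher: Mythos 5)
Your overall strategy (iteration for local existence, micro--macro decomposition with the extra heat-flux coefficient $d$ when $Pr=0$, recovery of full coercivity, closing by a Guo-type energy estimate) is the same as the paper's, and you correctly identify the enlarged kernel and the role of the vanishing third moment. However, there is a genuine gap at exactly the point you single out as the main obstacle: you claim that the hypotheses yield $\int_{\mathbb{T}^3} d\,dx\equiv 0$, so that Poincar\'e applies to $d$ just as to $(a,b,c)$. This is false. One has $d_i\propto\int f v_i(|v|^2-5)\sqrt m\,dv$, and while the momentum part is conserved, the total third moment is not: even at $Pr=0$ the cancellation \eqref{rq} only kills the \emph{centered} moment, and the computation leading to \eqref{intv|v|^2} shows that $\frac{d}{dt}\int_{\mathbb{T}^3\times\mathbb{R}^3}Fv_i|v|^2\,dv\,dx=\frac{1}{\tau}\int_{\mathbb{T}^3}\big(2U_i\rho T-2\sum_j\rho U_j\Theta_{ij}\big)dx$, which is quadratic in the perturbation but generically nonzero. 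The vanishing-third-moment hypothesis therefore gives $\int_{\mathbb{T}^3}d_i\,dx=0$ only at $t=0$; for $t>0$ one must integrate the evolution law \eqref{dd} in time and bound the result by $C\int_0^t\|f\|_{L^2_{x,v}}^2\,ds\le C\mathcal{E}(t)$, which is precisely why the energy functional \eqref{energy} contains the time-integrated production term (see the remark following Theorem~\ref{theorem}). Consequently the Poincar\'e step only yields $\|d\|_{L^2_x}\le\|\nabla_x d\|_{L^2_x}+C\mathcal{E}(t)$, and the coercivity one can actually prove for $Pr=0$ is the \emph{modified} estimate of Theorem~\ref{full coer}, with an extra $+C\mathcal{E}^2(t)$ on the right-hand side (Proposition~\ref{abcd}, Step 3), not the clean coercivity your Gr\"onwall/decay step presupposes; this error term must then be absorbed when closing the nonlinear energy estimate and when deriving the exponential decay.

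A secondary weak point is the positivity argument: the bracket in \eqref{Shakhov operator} is \emph{not} bounded away from zero for small $\mathcal{E}$, since it contains a cubic polynomial in $v-U$ whose sign depends on the direction of $q$, so for any $q\neq0$ the bracket becomes negative for large velocities. The paper instead iterates on $F$ with the gain term $\mathcal{S}_{Pr}(F^n)$ explicit and only the loss term implicit, and controls the heat-flux correction by absorbing the polynomial growth into the Gaussian factor via $|x^m e^{-x^2/2T}|\le CT^{m/2}$, reducing positivity to smallness of $|q_n|/(\rho_n T_n^{3/2})$; your argument as stated would need to be repaired along these lines.
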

\begin{remark}
The study of the vanishing Prandtl number is not only mathematically interesting but also physically relevant even since it can be an approximate model for fluids with a very small Prandtl number. For example, in some thermoacoustic engines, a gas with a low Prandtl number plays a critical role because of its high heat diffusivity. For example, the authors in \cite{mixpr,mixprmin} observed that some mixtures of light and heavy noble gases produce low Prandtl numbers. 
Besides, zero-Prandtl-number limit is considered in \cite{prconvection} to study the convection with extremely small Prandtl number such as 
the convection zone of the sun ($Pr \approx 10^{-8}$).
\end{remark}
\begin{remark}
In the case of $Pr>0$, the instant energy functional without the production term is sufficient to close the energy estimate as in \cite{MR4096124}. But when $Pr=0$, the production term must be incorporated into the energy functional. (See details in Proposition \ref{abcd} step 3.) 
\end{remark}

\subsection{Novelties $\&$ difficulties} To close the energy estimate to extend the local solution into the global one, it is important to identify the dissipative nature of the linear term $L_{Pr}$. In this regard, we observe the following dichotomy  in the degeneracy of the dissipative estimate: When $Pr>0$, we have
\begin{align}\label{P_c coer}
\langle L_{Pr}f,f \rangle_{L^2_v}\leq -\min\{Pr,1\} \|(I-P_c)f \|_{L^2_v}^2,
\end{align}
where $P_c$ is a projection operator on the linear space spanned by $\{\sqrt{m},v\sqrt{m},|v|^2\sqrt{m}\}$.
On the other hand,  the dissipation becomes more degenerate in the case $Pr=0$:
\begin{align}\label{P_{nc} coer}
\langle L_{0}f,f \rangle_{L^2_{x,v}} = -\|(I-P_c-P_{nc})f\|_{L^2_{x,v}}^2,
\end{align}
where $P_{nc}f$ is a projection operator on the linear space spanned by non-conservative basis $\{v|v|^2\sqrt{m}\}$, so that $P_c+P_{nc}$ constitutes
a projection operator on a wider space spanned by 8 bases: 
$\{\sqrt{m},v\sqrt{m},|v|^2\sqrt{m},v|v|^2\sqrt{m} \}$.

This additional degeneracy in the vanishing Prandtl number regime is due to the following cancellation property unobserved in the original BGK model or the Boltzmann equation:
\begin{align*}
\int_{\mathbb{R}^3}(\mathcal{S}_{0}(F)-F)(v_i-U_i)|v-U|^2dv = 0,
\end{align*}
which is obtained by putting $Pr=0$ in \eqref{rq}.
The larger kernel \eqref{P_{nc} coer} indicates that the degeneracy is stronger in the vanishing Prantl number regime since the dissipativity of $L_{Pr}$ stops operating on the null space.
In the case $Pr>0$, the full dissipativity of $L_{Pr}$ can be recovered by the standard argument \cite{MR1908664,MR2095473,MR3357626}: the derivatives of the kernels are estimated using the micro-macro equations that govern the evolution of the degenerate part, and the lowest order estimates are derived by combining the derivative estimates and the Poincar\'{e} inequality with the vanishing moments of the perturbation up to the second order. 
On the other hand, when the Prandtl number vanishes, a novel difficulty unobserved in the previous literature arises: The micro-macro system now involves a non-conservative quantity, namely the heat flux part. This leads to a more complicated micro-macro system, and more seriously, the lowest order estimate of these new terms can not be treated in a similar manner as in the previous case, since the third-order moment of the perturbation related to the heat flux does not vanish. To overcome this, we assume that the total third moment of the initial data vanishes:
\[\int_{\mathbb{T}^3\times\mathbb{R}^3} F_0(x,v)v_i|v|^2dvdx= 0.\]
and use the evolution law for the third moment:

\begin{align*}
\frac{d}{dt}\int_{\mathbb{T}^3\times\mathbb{R}^3} fv_i(|v|^2-5) \sqrt{m}dvdx
= \frac{1}{\tau} \bigg( \int_{\mathbb{T}^3}2U_i\rho T - \sum_{1\leq j \leq 3}2\rho U_j\Theta_{ij}dx\bigg),
\end{align*} 
to derive the following modified dissipation estimate: 
\begin{align*}
\sum_{|\alpha|\leq N}\langle L_{Pr}\partial^{\alpha}f, \partial^{\alpha}f\rangle_{L^2_{x,v}} \leq -\delta \sum_{|\alpha|\leq N}\|\partial^{\alpha}f\|_{L^2_{x,v}}^2+C\mathcal{E}^{2}(t),
\end{align*}
for some positive constant $\delta>0$, which enables one to construct the global-in-time classical solution.

\subsection{Brief history}
We briefly overview mathematical results on various BGK models. \newline
\noindent(1) Original BGK model:
The first existence result goes back to \cite{MR1023307} where Perthame established the existence of global weak solutions of the BGK model. 
The uniqueness was guaranteed in a more stringent weighted $L^{\infty}$ space in \cite{MR1245074} (See \cite{MR2370243} for $L^p$ extension). The strong convergence to the Maxwellian is obtained by Desvillettes in \cite{MR1031086}. For the stationary case, Ukai constructs the existence theorem with a large boundary data in a $1$-dimensional bounded interval.
The global-in-time classical solution of the BGK  model near equilibrium can be found in \cite{MR1969901,MR2779616}. For the particle system immersed in a fluid described by the coupled equation of the Navier-Stokes equation and
the BGK model, we refer to \cite{CY-NHM,CY-NS,CY-NS-Strong}.
For the convergence analysis of numerical schemes of the BGK model, see \cite{Issau,RSY}. 
\newline
\noindent(2) ES-BGK model: The revival of interest in this model was brought due to the proof of the H-theorem provided in \cite{MR1803856}. A systematic derivation of this model was suggested in \cite{BS}.
The existence of a classical solution in the weighted $L^{\infty}$ space was obtained in \cite{MR3397316}.
In \cite{MR3357626}, the asymptotic stability of global Maxwellians is considered. 
The entropy-entropy production estimate is derived in \cite{MR3541542,Kim-Lee-Yun}. 
Convergence analysis of a fully discretized scheme for ES-BGK model was made in \cite{RY}. 
For the mathematical results on the polyatomic version of the ES-BGK model, we refer to \cite{PY,PY2,PY3,MR3912760}.
\newline
\noindent (3) Shakhov model: 
The mathematical research of the Shakhov model is in the initial state.
In \cite{MR3662471}, the author considered the existence of the stationary $1$-dimensional steady state. Latyshev and Yushkanov analytically solved a stationary boundary value problem of the Shakhov type equation in \cite{MR2028190}.
For the numeric scheme of the Shakhov model, we refer to \cite{cai2012nr,pantazis2010heat,MR2327060,MR3943439}. 

A brief review of the literature that compares the ES-BGK model and the Shakhov model is in order.
\cite{graur2009comparison} illustrates some numerical examples which indicate that the ES-BGK model works better than the Shakhov model for heat transfer problems. On the other hand, it is reported in \cite{MR3389025} that the Shakhov model produces more accurate results under tough conditions such as the shock structure or some particular boundary conditions in the transition regime \cite{MR3389025}. Besides, the Shakhov model can capture the velocity slip and the temperature jump near the wall more accurately, and shows good accuracy in predicting the non-equilibrium flow in transition regime \cite{MR3925938}. See \cite{zheng2005ellipsoidal} for an organized comparison between these models.

The general mathematical and physical review of the Boltzmann and the BGK equation can be found in \cite{MR1803856,MR1215852,MR1313028,MR1307620,MR0258399,MR1014927,MR1379589,MR0156656,MR363332,MR1942465}.
\newline

\subsection{Notations:}
The following notations, conventions, and definitions will be fixed throughout the paper.
\begin{itemize}
	\item The constant $C$ in an estimate denotes a generically defined constant. 
	\item $(x_1,\cdots,x_n)$ is understood as an $n$-dimensional column vector.
	\item $\mathbb{I}_n$ is the $n$-tuple of $1$: $(1,\cdots,1)\in\mathbb{R}^n$.
	\item  $0^n$ stands for the $n$-dimension zero vector. For example $(1,0^3)=(1,0,0,0)$. 
	\item We use the standard $L^2_{v}$ and $L^2_{x,v}$ inner product on $\mathbb{R}^3_v$ and $\mathbb{T}^3_x\times\mathbb{R}^3_v$, respectively.
	\begin{align*}
	\langle f,g \rangle_{L^2_v}=\int_{\mathbb{R}^3}f(v)g(v)dv, \quad \langle f,g \rangle_{L^2_{x,v}}=\int_{\mathbb{T}^3\times\mathbb{R}^3}f(x,v)g(x,v)dvdx.
	\end{align*}
	\item We use the standard $L^2_v$ norm and $L^2_{x,v}$ norm on $\mathbb{R}^3_v$ and $\mathbb{T}^3_x\times\mathbb{R}^3_v$, respectively.
	\begin{align*}
	\|f\|_{L^2_v}=\left(\int_{\mathbb{R}^3}|f(v)|^2dv\right)^{\frac{1}{2}}, \quad \|f\|_{L^2_{x,v}}=\left(\int_{\mathbb{T}^3\times\mathbb{R}^3}|f(x,v)|^2dvdx\right)^{\frac{1}{2}}.
	\end{align*}
	\item We use the multi-indices notations $\alpha =(\alpha_0,\alpha_1,\alpha_2,\alpha_3)$,  $\beta =(\beta_1,\beta_2,\beta_3)$, and differential operator: 
	\[\partial^{\alpha}_{\beta} = \partial_t^{\alpha_0}\partial_{x_1}^{\alpha_1}\partial_{x_2}^{\alpha_2}\partial_{x_3}^{\alpha_3}\partial_{v_1}^{\beta_1}\partial_{v_2}^{\beta_2}\partial_{v_3}^{\beta_3}.\]
	\item Throughout this paper, we fix $\gamma$ to denote pure temporal derivative:
	\[
	\gamma=(\gamma_0,0,0,0),
	\]
	so that
	\[
	\partial^{\gamma}f=\partial^{\gamma_0}_tf.
	\]
\end{itemize}

This paper is organized as follows: In Section 2, we linearize the Shakhov model near a global Maxwellian. 
Section 3 is devoted to proving some estimates of the macroscopic fields and the non-linear term to construct the local-in-time solution. In Section 4 and Section 5, we establish the coercivity estimate for $Pr>0$ and $Pr=0$, respectively. 
From the coercivity estimate, we establish the existence of the global-in-time classical solution in the last section. 

\section{Linearization}
\subsection{Linearization of the Shakhov operator}
In this section, we linearize the Shakhov operator $\mathcal{S}_{Pr}(F)$ near the global Maxwellian. We start with the definition of the Shakhov projection operator.
\begin{definition}\label{P_{Pr}f} We define the  $8$-dimensional macroscopic projection operator as follows:
\begin{align*}
P_{Pr}f&=P_cf+ (1-Pr) P_{nc}f,
\end{align*}
where  $P_c$ and $P_{nc}$ are projections on the conservative space and non-conservative space respectively:
\begin{align*}
P_cf&=\left(\int_{\mathbb{R}^3} f\sqrt{m}dv\right)\sqrt{m}+\left(\int_{\mathbb{R}^3} fv\sqrt{m}dv\right)\cdot v\sqrt{m}+\left(\int_{\mathbb{R}^3} f\frac{|v|^2-3}{\sqrt{6}}\sqrt{m}dv\right)\frac{|v|^2-3}{\sqrt{6}}\sqrt{m}, \cr
P_{nc}f&=\left(\int_{\mathbb{R}^3} f\frac{v(|v|^2-5)}{\sqrt{10}} \sqrt{m}dv\right)\cdot\frac{v(|v|^2-5)}{\sqrt{10}}\sqrt{m}.
\end{align*}
\end{definition}

\begin{definition}\label{GH} We define $G_{ij}$ and $H_i$ $(1\leq i,j\leq 3)$ by
\begin{align}\label{G}
G_{ij}=\begin{cases}\frac{1}{2}\left(\rho \Theta_{ii}+\rho U_i^2-\rho\right), ~ &\textit{if} \quad i=j,	\cr
\rho\Theta_{ij}+\rho U_iU_j, ~ &\textit{if} \quad i\neq j,\end{cases}
\end{align}
for $1\leq i,j \leq 3$, and
\begin{align}\label{H}
H_i = \frac{1}{\sqrt{10}}\bigg(q_i +\sum_{1\leq j \leq 3}2\rho U_j\Theta_{ij}+ \rho U_i|U|^2 + \rho U_i (\Theta_{11}+\Theta_{22}+\Theta_{33}) -5\rho U_i\bigg),
\end{align}
for $i=1,\cdots,3$, where $\Theta$ is the stress tensor defined in (\ref{macro quantity}). Since $G$(or $\Theta$) is symmetric $3 \times 3$ matrix, we view it as a component of $\mathbb{R}^6$:
\begin{align}\label{order}
	G= \left\{G_{11},G_{22},G_{33},G_{12},G_{23},G_{31}\right\},
\end{align}
for simplicity.
\end{definition}

\begin{proposition}\label{linear} Let $F=m+\sqrt{m}f$. Then the Shakhov operator is linearized into the following form: 
\begin{align*}
\mathcal{S}_{Pr}(F) = m+P_{Pr}f\sqrt{m} + \sum_{1 \leq i,j \leq 13} \int_0^1 \left\{D^2_{(\rho_{\theta},\rho_{\theta} U_{\theta},G_{\theta},H_{\theta})}\mathcal{S}_{Pr}(\theta)\right\}_{ij}(1-\theta)d\theta \langle f, e_i \rangle_{L^2_v}\langle f, e_j \rangle_{L^2_v},
\end{align*}
where 
\begin{align*}
\mathcal{S}_{Pr}(\theta)=\frac{\rho_{\theta}}{\sqrt{2\pi T_{\theta}}^3}\exp\left(-\frac{|v-U_{\theta}|^2}{2T_{\theta}}\right)\left[1+\frac{1-Pr}{5}\frac{q_{\theta}\cdot (v-U_{\theta})}{\rho_{\theta} T_{\theta}^2}\left(\frac{|v-U_{\theta}|^2}{2T_{\theta}}-\frac{5}{2}\right)\right],
\end{align*}
and the transitional  macroscopic fields $\rho_{\theta}$, $U_{\theta}$, $\Theta_{\theta}$ and $q_{\theta}$ are given by 
\begin{align}\label{transmacro}
\rho_{\theta} &= \theta \rho +(1-\theta), \quad
\rho_{\theta}U_{\theta} = \theta \rho U, \quad
G_{\theta}= \theta G, \quad
H_{\theta}=\theta H.
\end{align}
The $13$-basis $\{e_i\}_{1\leq i \leq 13}$ denote 
\begin{align*}
	e_1&=\sqrt{m}, \qquad e_{i+1}=v_i\sqrt{m}, \qquad e_{i+4}=\frac{v_i^2-1}{2}\sqrt{m},  \cr
	e_{8}&=v_1v_2\sqrt{m}, \qquad e_{9}=v_2v_3\sqrt{m} \qquad e_{10}=v_1v_3\sqrt{m},
\end{align*}
and
\begin{align*}
	e_{i+10} = \frac{v_i|v|^2\sqrt{m}-5v_i\sqrt{m}}{\sqrt{10}},
\end{align*}
for $i=1,2,3$.
\end{proposition}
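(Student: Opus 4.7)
The approach is a second-order Taylor expansion of $\mathcal{S}_{Pr}(F)$ viewed as a function of the thirteen macroscopic moments $(\rho,\rho U,G,H)$. Since the Shakhov operator \eqref{Shakhov operator} depends on $F$ only through $\rho,U,T,q$, and all four can be reconstructed smoothly from $(\rho,\rho U,G,H)$ on a neighborhood of the Maxwellian state $\mathbf{a}_0:=(1,0,0,0)$, one writes $\mathcal{S}_{Pr}(F)=\Phi(\rho,\rho U,G,H)$ for a smooth map $\Phi$. Setting $\mathbf{a}_\theta:=(1-\theta)\mathbf{a}_0+\theta(\rho,\rho U,G,H)$, which by inspection coincides with the transitional fields \eqref{transmacro}, Taylor's formula with integral remainder reads
\begin{align*}
\Phi(\mathbf{a})=\Phi(\mathbf{a}_0)+D\Phi(\mathbf{a}_0)\cdot(\mathbf{a}-\mathbf{a}_0)+\int_0^1(1-\theta)\,D^2\Phi(\mathbf{a}_\theta)[\mathbf{a}-\mathbf{a}_0,\mathbf{a}-\mathbf{a}_0]\,d\theta,
\end{align*}
and $\Phi(\mathbf{a}_0)=m$ because at $\rho=1,U=0,T=1,q=0$ the bracket in \eqref{Shakhov operator} equals $1$ and $\mathcal{M}=m$. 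A preliminary moment computation, obtained by substituting $F=m+\sqrt{m}f$ into \eqref{macro quantity} and using $\rho=1$, $U=0$, $T=1$, $q=0$, $\Theta=I$ for $m$, identifies each component of $\mathbf{a}-\mathbf{a}_0$ with an inner product $\langle f,e_i\rangle_{L^2_v}$: $\rho-1=\langle f,e_1\rangle$, $\rho U_i=\langle f,e_{i+1}\rangle$, $G_{ii}=\tfrac{1}{2}\int f(v_i^2-1)\sqrt{m}\,dv=\langle f,e_{i+4}\rangle$, the off-diagonal $G_{ij}=\int fv_iv_j\sqrt{m}\,dv=\langle f,e_k\rangle$ for $k=8,9,10$ in the order of \eqref{order}, and, by expanding $v|v|^2-5v$ about $w=v-U$ and matching \eqref{H}, $H_i=\langle f,e_{10+i}\rangle$.

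The principal task, and the main obstacle, is identifying $D\Phi(\mathbf{a}_0)\cdot(\mathbf{a}-\mathbf{a}_0)$ with $P_{Pr}f\sqrt{m}$. I would split the differential into its conservative block in $(\rho,\rho U,G)$ and its non-conservative block in $H$. Because the bracket in \eqref{Shakhov operator} minus $1$ is linear in $q$ and $q=0$ at $\mathbf{a}_0$, the conservative derivatives of $\Phi$ there reduce to those of $\mathcal{M}(F)$ at $m$; a direct expansion yields $\partial_\rho\Phi|_{\mathbf{a}_0}=m$, $\partial_{(\rho U)_i}\Phi|_{\mathbf{a}_0}=v_im$, $\partial_{G_{ii}}\Phi|_{\mathbf{a}_0}=\tfrac{|v|^2-3}{3}m$ (via $\partial_{G_{ii}}T|_{\mathbf{a}_0}=\tfrac{2}{3}$), and $\partial_{G_{ij}}\Phi|_{\mathbf{a}_0}=0$ for $i\neq j$, which after contraction with the moments $\langle f,e_i\rangle$ for $i=1,\ldots,10$ collapses to $P_cf\sqrt{m}$. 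For the $H$-block I use the chain rule through $q$: the identity \eqref{H} at $U=0$, $\Theta=I$ gives $q=\sqrt{10}H$, hence $\partial q_j/\partial H_i|_{\mathbf{a}_0}=\sqrt{10}\,\delta_{ij}$, while differentiating $\mathcal{S}_{Pr}$ in $q_j$ at $\mathbf{a}_0$ produces $\tfrac{1-Pr}{10}v_j(|v|^2-5)m$; composing and recognizing $\tfrac{v_i(|v|^2-5)}{\sqrt{10}}\sqrt{m}=e_{10+i}$ gives
\begin{align*}
\sum_{i=1}^{3}\partial_{H_i}\Phi(\mathbf{a}_0)\,H_i=(1-Pr)\sum_{i=1}^{3}\langle f,e_{10+i}\rangle_{L^2_v}\,e_{10+i}\sqrt{m}=(1-Pr)P_{nc}f\sqrt{m}.
\end{align*}
Adding the two blocks recovers $(P_c+(1-Pr)P_{nc})f\sqrt{m}=P_{Pr}f\sqrt{m}$ by Definition \ref{P_{Pr}f}.

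The quadratic remainder then matches the statement directly: $\mathcal{S}_{Pr}(\theta)$ written in the proposition is precisely $\Phi(\mathbf{a}_\theta)$, the Hessian entries labelled by $(\rho_\theta,\rho_\theta U_\theta,G_\theta,H_\theta)$ are the corresponding second partials of $\Phi$, and by the preliminary moment identification each product $(\mathbf{a}-\mathbf{a}_0)_i(\mathbf{a}-\mathbf{a}_0)_j$ equals $\langle f,e_i\rangle_{L^2_v}\langle f,e_j\rangle_{L^2_v}$, so the integral remainder reproduces the double sum over $1\leq i,j\leq 13$ in the statement. The smoothness of $\Phi$ along the path $\mathbf{a}_\theta$ needed to justify the Hessian follows from the positivity $\rho_\theta\geq\min\{\rho,1\}>0$ and a corresponding lower bound on the induced $T_\theta$, both of which are secured by the smallness hypothesis $\mathcal{E}(f_0)\leq M$ of Theorem \ref{theorem}.
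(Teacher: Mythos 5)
Your proposal is correct and follows essentially the same route as the paper: Taylor's formula with integral remainder along the transitional straight-line path in the thirteen moment variables $(\rho,\rho U,G,H)$, identification of the increments with the moments $\langle f,e_i\rangle_{L^2_v}$ via \eqref{rh}, \eqref{newG}, \eqref{newH}, and evaluation of the first differential at the Maxwellian state to produce $P_{Pr}f\sqrt{m}$ — the paper merely routes this last step through the explicit Jacobian of Lemma \ref{Jaco} and the derivatives of Lemma \ref{S diff}, whose inverse is diagonal at $\theta=0$. The one point your conservative-block shortcut should state explicitly is that the chain-rule contribution $\mathcal{M}\cdot(\text{coefficient of }q)\cdot\partial q/\partial(\rho,\rho U,G)$ (at fixed $H$) also vanishes at the base point — linearity in $q$ together with $q=0$ does not kill this term by itself; it vanishes because every non-$q$ term in \eqref{H} carries a factor of $\rho U$, which is zero at the Maxwellian state, and this is exactly what the diagonal structure of $J^{-1}$ at $\theta=0$ in \eqref{diag0} encodes.
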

\begin{proof}
We apply Taylor's theorem:
\begin{align}\label{taylor}
\mathcal{S}_{Pr}(1)=\mathcal{S}_{Pr}(0)+\mathcal{S}_{Pr}'(0)+\int_0^1 \mathcal{S}_{Pr}''(\theta)(1-\theta) d\theta.
\end{align}
We can easily see that
\begin{align*}
\mathcal{S}_{Pr}(0)= m, \qquad \textit{and} \qquad \mathcal{S}_{Pr}(1)&=\mathcal{S}_{Pr}(F).
\end{align*}
\noindent$\bullet$ Computation of $\mathcal{S}_{Pr}'(0)$: An explicit computation with a change of variable 
\begin{align}\label{changeofvar}
	(\rho,U,\Theta,q) \rightarrow (\rho,\rho U,G,H),
\end{align}
gives
\begin{align}\label{S'}
	\mathcal{S}_{Pr}'(0)&=\left(\frac{d (\rho_{\theta}, \rho_{\theta} U_{\theta}, G_{\theta}, H_{\theta})}{d \theta}\right)^{T} \left( \frac{\partial(\rho_{\theta},\rho_{\theta} U_{\theta},G_{\theta},H_{\theta})} {\partial(\rho_{\theta},U_{\theta},\Theta_{\theta},q_{\theta})} \right)^{-1}\nabla_{(\rho_{\theta},U_{\theta},\Theta_{\theta},q_{\theta})}\mathcal{S}_{Pr}(\theta)\bigg|_{\theta=0}.
\end{align}
To proceed further, we need the following two auxiliary lemmas.

\begin{lemma}\label{Jaco} 
	Let $J$ denote the Jacobian matrix:
	\[
	J \equiv \frac{\partial(\rho,\rho U,G,H)} {\partial(\rho,U,\Theta,q)}.
	\]
	Then we have\newline
\noindent(1) T $J$ is given by
\begin{align*}
\left[ {\begin{array}{ccccccccccccc}
1 & 0 & 0 & 0 & 0 & 0 & 0 & 0 & 0 & 0 & 0 & 0 & 0  \\
U_1 &&& & 0 & 0 & 0 & 0 & 0 & 0 & 0 & 0 & 0\\
U_2 && \rho I_3&&0 & 0 & 0 & 0 & 0 & 0& 0 & 0 & 0\\
U_3 & &&& 0 & 0 & 0 & 0 & 0 & 0 & 0 & 0 & 0  \\
(\Theta_{11}+U_1^2-1)/2 & \rho U_1 & 0 & 0 & &&& 0 & 0 & 0 & 0 & 0 & 0 \\
(\Theta_{22}+U_2^2-1)/2 & 0 & \rho U_2 & 0 & &\frac{\rho}{2}I_3& & 0 & 0 & 0 & 0 & 0 & 0\\
(\Theta_{33}+U_3^2-1)/2 & 0 & 0 & \rho U_3 & &&& 0 & 0 & 0 & 0 & 0& 0 \\
\Theta_{12}+U_1U_2 & \rho U_2 & \rho U_1 & 0 & 0 & 0 & 0 &&& & 0 & 0 & 0 \\
\Theta_{23}+U_2U_3 & 0 & \rho U_3 & \rho U_2 & 0 & 0 & 0 & &\rho I_3& & 0 & 0 & 0\\
\Theta_{31}+U_3U_1 & \rho U_3 & 0 & \rho U_1 & 0 & 0 & 0 & && & 0 & 0& 0 \\
 &  &  &  &  &  &  & \frac{2\rho U_2}{\sqrt{10}} & 0 & \frac{2\rho U_3}{\sqrt{10}} & && \\
A &  & B &  &  & C &  & \frac{2\rho U_1}{\sqrt{10}} & \frac{2\rho U_3}{\sqrt{10}} & 0 & &\frac{1}{\sqrt{10}}I_3&\\
 &  &  &  &  &  &  & 0 & \frac{2\rho U_2}{\sqrt{10}} & \frac{2\rho U_1}{\sqrt{10}}& &&
\end{array} } \right],
\end{align*}
where
\begin{align*}
A_i&=\frac{1}{\sqrt{10}}\left(\sum_{1\leq j \leq 3}2 U_j\Theta_{ij}+  U_i|U|^2 +  U_i (\Theta_{11}+\Theta_{22}+\Theta_{33}) -5\rho U_i\right),\cr
B_{ij}&=\frac{1}{\sqrt{10}}\left(2\rho\Theta_{ij}+2\rho U_iU_j +(\rho(\Theta_{11}+\Theta_{22}+\Theta_{33}) + \rho |U|^2-5\rho)\delta_{ij}\right),
\end{align*}
and
\begin{align*}
C_{ij}=\frac{1}{\sqrt{10}}\left(2\rho U_i\delta_{ij}+\rho U_j\right),
\end{align*}
for $i,j=1,2,3$. \newline
\noindent(2) The inverse of $J$ reads
\begin{align*}
\left[ {\begin{array}{ccccccccccccc}
1 & 0 & 0 & 0 & 0 & 0 & 0 & 0 & 0 & 0 & 0 & 0 & 0  \\
-\frac{U_1}{\rho} &&& & 0 & 0 & 0 & 0 & 0 & 0 & 0 & 0 & 0\\
-\frac{U_2}{\rho} && \frac{1}{\rho} I_3&&0 & 0 & 0 & 0 & 0 & 0& 0 & 0 & 0\\
-\frac{U_3}{\rho} & &&& 0 & 0 & 0 & 0 & 0 & 0 & 0 & 0 & 0  \\
\frac{-\Theta_{11}+U_1^2+1}{\rho} & -\frac{2U_1}{\rho} & 0 & 0 & &&& 0 & 0 & 0 & 0 & 0 & 0 \\
\frac{-\Theta_{22}+U_2^2+1}{\rho} & 0 & -\frac{2U_2}{\rho} & 0 & &\frac{2}{\rho}I_3& & 0 & 0 & 0 & 0 & 0 & 0\\
\frac{-\Theta_{33}+U_3^2+1}{\rho} & 0 & 0 & -\frac{2U_3}{\rho} & &&& 0 & 0 & 0 & 0 & 0& 0 \\
\frac{-\Theta_{12}+U_1U_2}{\rho} & -\frac{U_2}{\rho} & -\frac{U_1}{\rho} & 0 & 0 & 0 & 0 &&& & 0 & 0 & 0 \\
\frac{-\Theta_{23}+U_2U_3}{\rho} & 0 & -\frac{U_3}{\rho} & -\frac{U_2}{\rho} & 0 & 0 & 0 & &\frac{1}{\rho} I_3& & 0 & 0 & 0\\
\frac{-\Theta_{31}+U_3U_1}{\rho} & -\frac{U_3}{\rho} & 0 & -\frac{U_1}{\rho} & 0 & 0 & 0 & && & 0 & 0& 0 \\
 &  &  &  &  &  &  & -2U_2 & 0 & -2U_3 & && \\
A' &  & B' &  &  & C' &  & -2U_1 & -2U_3 & 0 & &\sqrt{10}I_3&\\
 &  &  &  &  &  &  & 0 & -2U_2 & -2U_1& &&
\end{array} } \right],
\end{align*}
where
\begin{align*}
A_i'&=2 \sum_{i\neq j}\left(U_j\Theta_{ij}-U_j^2U_i\right) - A_i\sqrt{10} +\frac{\sqrt{10}}{\rho}\sum_{1\leq j \leq 3}\left(U_jB_{ij}+C_{ij}(\Theta_{jj}-U_j^2-1) \right), \cr
B'_{ij}&=\frac{1}{5\rho}\big(10\rho U_i U_j + (|U|^2-2U_i^2)\delta_{ij} -5B_{ij}\sqrt{10}+10\sqrt{10}U_jC_{ij}\big),
\end{align*}
and
\begin{align*}
C'_{ij}=-\frac{2\sqrt{10}}{\rho}C_{ij},
\end{align*}
for $i,j=1,2,3$.
\end{lemma}
\begin{proof}
We omit it since it is straightforward and tedious.
\end{proof}
\begin{lemma}\label{S diff} We have
\begin{align*}
(1) ~ \frac{\partial \mathcal{S}_{Pr}(	\theta)}{\partial \rho_{\theta}}\bigg|_{\theta=0} &= m, \qquad \hspace{1.9cm} 
(2) ~ \frac{\partial \mathcal{S}_{Pr}(	\theta)}{\partial U_{\theta i}} \bigg|_{\theta=0} = v_im, \cr
(3) ~ \frac{\partial \mathcal{S}_{Pr}(	\theta)}{\partial \Theta_{\theta ii}}\bigg|_{\theta=0} &= \frac{|v|^2-3}{6}m, \qquad \qquad 
(4) ~ \frac{\partial \mathcal{S}_{Pr}(	\theta)}{\partial \Theta_{\theta ij}}\bigg|_{\theta=0} =0 \quad (\textit{for} \quad i\neq j), \cr
(5) ~ \frac{\partial \mathcal{S}_{Pr}(	\theta)}{\partial q_{\theta i}}\bigg|_{\theta=0} &= \left[\frac{1-Pr}{5}v_i\left(\frac{|v|^2}{2}-\frac{5}{2}\right)\right]m,
\end{align*}
for $i,j=1,2,3.$
\end{lemma}
\begin{proof}
All these identities follow from substituting 
\begin{align*}
(\rho_{\theta}, U_{\theta}, \Theta_{\theta} ,T_{\theta}, q_{\theta})|_{\theta=0} =(1,0^3,1^3,0^3,1,0^3).
\end{align*}
into the following identities:
\begin{align*}
\frac{\partial \mathcal{S}_{Pr}(F)}{\partial \rho}=\frac{1}{\rho}\mathcal{M}(F).
\end{align*}

\begin{align*}
\frac{\partial \mathcal{S}_{Pr}(F)}{\partial U_i}&=\frac{v_i-U_i}{T}\left[1+\frac{1-Pr}{5}\frac{q\cdot (v-U)}{\rho T^2}\left(\frac{|v-U|^2}{2T}-\frac{5}{2}\right)\right]\mathcal{M}(F) \cr
&-\left[\frac{1-Pr}{5}\frac{q_i}{\rho T^2}\left(\frac{|v-U|^2}{2T}-\frac{5}{2}\right)+\frac{1-Pr}{5}\frac{q\cdot (v-U)}{\rho T^2}\left(\frac{v_i-U_i}{T}\right)\right]\mathcal{M}(F),
\end{align*}

\begin{align*}
\frac{\partial \mathcal{S}_{Pr}(F)}{\partial \Theta_{ii}} &= \frac{\partial T}{\partial \Theta_{ii}}\frac{\partial \mathcal{S}_{Pr}(F)}{\partial T} \cr
&=\frac{1}{3}\left(-\frac{3}{2T}+\frac{|v-U|^2}{2T^2}\right)\left[1+\frac{1-Pr}{5}\frac{q\cdot (v-U)}{\rho T^2}\left(\frac{|v-U|^2}{2T}-\frac{5}{2}\right)\right]\mathcal{M}(F) \cr
&-\left[\frac{1-Pr}{5}\frac{q\cdot (v-U)}{\rho T^2}\left(\frac{|v-U|^2}{2T^2}-\frac{5}{3T}\right)\right]\mathcal{M}(F),
\end{align*}
\begin{align*}
\frac{\partial \mathcal{S}_{Pr}(F)}{\partial \Theta_{ij}}&=0,
\end{align*}
\begin{align*}
	\frac{\partial \mathcal{S}_{Pr}(F)}{\partial q_i}&=\left[\frac{1-Pr}{5}\frac{v_i-U_i}{\rho T^2}\left(\frac{|v-U|^2}{2T}-\frac{5}{2}\right)\right]\mathcal{M}(F).
\end{align*}
\end{proof}
Now we turn back to  \eqref{S'}, and evaluate each term in \ref{S'} at $\theta=0$. 
From the definition of the transitional macroscopic fields \eqref{transmacro}, we compute
\begin{align}\label{ddt}
\frac{d (\rho_{\theta}, \rho_{\theta} U_{\theta}, G_{\theta}, H_{\theta})}{d \theta}&=(\rho-1, \rho U,G, H). 
\end{align}
We express each term as a moment of $f$.
First, substituting $F=m+\sqrt{m}f$ into $\eqref{macro quantity}_1$ and $\eqref{macro quantity}_2$, we easily get 
\begin{align}\label{rh}
\rho-1=\int_{\mathbb{R}^3}\sqrt{m}fdv, \qquad \rho U=\int_{\mathbb{R}^3}\sqrt{m}fvdv.
\end{align}
For $G$ and $H$, we rewrite  $\eqref{macro quantity}_4$ and $\eqref{macro quantity}_5$ as
\begin{align}\label{viv2}
\begin{split}
\int_{\mathbb{R}^3}F(x,v,t)v_iv_jdv&= \rho \Theta_{ij}+\rho U_iU_j, \cr
\int_{\mathbb{R}^3}F(x,v,t)v_i|v|^2dv&=q_i +\sum_{1\leq j \leq 3}2\rho U_j\Theta_{ij}+ \rho U_i|U|^2 + U_i \rho  \sum_{1\leq i \leq 3 }\Theta_{ii},
\end{split}
\end{align}
so that
\begin{align*}
\begin{split}
\int_{\mathbb{R}^3}\sqrt{m}fv_iv_jdv&= \begin{cases}\rho \Theta_{ii}+\rho U_i^2 - 1, &\textit{if} \quad i=j, \cr \rho \Theta_{ij}+\rho U_iU_j,~ &\textit{if} \quad i\neq j,  \end{cases}
\end{split}
\end{align*}
and
\begin{align*}
\begin{split}
\int_{\mathbb{R}^3}\sqrt{m}fv_i|v|^2dv&=q_i +\sum_{1\leq j \leq 3}2\rho U_j\Theta_{ij}+ \rho U_i|U|^2 + U_i \rho  \sum_{1\leq i \leq 3 }\Theta_{ii}.
\end{split}
\end{align*}
Recalling the definition of $G$ and $H$ in \eqref{G} and \eqref{H}, these identities yield
\begin{align}\label{newG}
G_{ij}=\begin{cases}\frac{1}{2}\int_{\mathbb{R}^3}(v_i^2-1)f\sqrt{m} dv, ~ &\textit{if} \quad i=j,	\cr
\int_{\mathbb{R}^3} v_iv_j f\sqrt{m} dv , ~ &\textit{if} \quad i\neq j,\end{cases}
\end{align}
and
\begin{align}\label{newH}
H_i = \int_{\mathbb{R}^3}v_i\frac{|v|^2-5}{\sqrt{10}}f\sqrt{m} dv.
\end{align}
Replacing entries in R.H.S of (\ref{ddt}) with \eqref{rh}, \eqref{newG}, \eqref{newH}, we derive the follwing expression:
\begin{align}\label{rugh}
\begin{split}
\frac{d (\rho_{\theta}, \rho_{\theta} U_{\theta}, G_{\theta}, H_{\theta})}{d \theta} 
=\left(  \langle f, e_1  \rangle_{L^2_v}, \cdots, \langle f, e_{13} \rangle_{L^2_v}  \right).
\end{split}
\end{align}
The Jacobian term follows directly from Lemma \ref{Jaco} (2): 
\begin{align}\label{diag0}
\begin{split}
	\left( \frac{\partial(\rho_{\theta},\rho_{\theta} U_{\theta},G_{\theta},H_{\theta})} {\partial(\rho_{\theta},U_{\theta},\Theta_{\theta},q_{\theta})} \right)^{-1}\bigg|_{\theta=0} 
= diag\left(1,1,1,1,2,2,2,1,1,1,\sqrt{10},\sqrt{10},\sqrt{10}\right).
\end{split}
\end{align}
Finally, Lemma \ref{S diff} gives
\begin{align}\label{nablaa}
\nabla_{(\rho_{\theta},U_{\theta},\Theta_{\theta},q_{\theta})}\mathcal{S}_{Pr}(\theta)\,\Big|_{\theta=0}=	\bigg( 1,v,  \bigg(\frac{|v|^2-3}{6}\bigg)\,\mathbb{I}_3, 0^3, 
	\frac{1-Pr}{5}v\bigg(\frac{|v|^2}{2}-\frac{5}{2}\bigg)\bigg)m,
\end{align}
where $\mathbb{I}_3$ denotes $(1,1,1)$.\newline

Now, we substitute \eqref{rugh}, \eqref{diag0} and \eqref{nablaa} into \eqref{S'} to derive the following expression for 
$\mathcal{S}_{Pr}'(0)$.
\begin{align*}
\mathcal{S}_{Pr}'(0)&= \left(\int_{\mathbb{R}^3}f\sqrt{m} dv\right) m + \left(\int_{\mathbb{R}^3}fv\sqrt{m} dv\right) vm  +\sum_{1\leq i \leq 3}\left(\int_{\mathbb{R}^3}f\frac{v_i^2-1}{2}\sqrt{m} dv\right) \frac{|v|^2-3}{3}m \cr
&+\sum_{1\leq i \leq 3}\left(\int_{\mathbb{R}^3}fv_i\frac{|v|^2-5}{\sqrt{10}}\sqrt{m} dv \right)  (1-Pr)v_i\frac{|v|^2-5}{\sqrt{10}}m  \cr
&=P_cf\sqrt{m}+(1-Pr)P_{nc}f\sqrt{m}.\cr
\end{align*}

\noindent $\bullet$ Expression of the integral term: An explicit computation gives
\begin{align*}
\mathcal{S}_{Pr}''(\theta)&= \frac{d^2\mathcal{S}_{Pr}}{d\theta^2}(\rho_{\theta},\rho_{\theta} U_{\theta},G_{\theta},H_{\theta})	\cr
&=(\rho-1, \rho U, G, H)^T\left\{D^2_{(\rho_{\theta},\rho_{\theta} U_{\theta},G_{\theta},H_{\theta})}\mathcal{S}_{Pr}(\theta)\right\}(\rho-1, \rho U, G, H).
\end{align*}
Then, \eqref{rugh} yields 
\begin{align*}
\mathcal{S}_{Pr}''(\theta)&= \sum_{1 \leq i,j \leq 13} \left\{D^2_{(\rho_{\theta},\rho_{\theta} U_{\theta},G_{\theta},H_{\theta})}\mathcal{S}_{Pr}(\theta)\right\}_{ij}\langle f, e_i \rangle_{L^2_v}\langle f, e_j \rangle_{L^2_v}.
\end{align*}
This completes the proof.
\end{proof}
In the following lemma, we rewrite the second-order term $\mathcal{S}_{Pr}''(\theta)$ in a more tractable manner.
\begin{lemma}\label{non-linear poly} Each element of $D^2_{(\rho_{\theta},\rho_{\theta} U_{\theta},G_{\theta},H_{\theta})}\mathcal{S}_{Pr}(\theta)$ can be expressed in the following form:
\begin{align*}
\{D^2_{(\rho_{\theta},\rho_{\theta} U_{\theta},G_{\theta},H_{\theta})}\mathcal{S}_{Pr}(\theta)\}_{ij} =\frac{\mathcal{P}_{ij}(\rho_{\theta},U_{\theta},\Theta_{\theta},q_{\theta},(v_i-U_{\theta i}),1-Pr)}{\rho_{\theta}^{m}T_{\theta}^{n}}\mathcal{M}(\theta),
\end{align*}
where 
\begin{itemize}
\item $\mathcal{M}(\theta)$ is defined as 
\begin{align*}
\mathcal{M}(\theta)=\frac{\rho_{\theta}}{\sqrt{2\pi T_{\theta}}^3}\exp\left(-\frac{|v-U_{\theta}|^2}{2T_{\theta}}\right),
\end{align*}
with
\begin{align*}
3T_{\theta}=\sum_{1\leq i \leq 3 }\Theta_{\theta ii}.
\end{align*}
\item $\mathcal{P}$ is a generically defined polynomial of the following form : 
\begin{align*}
\mathcal{P}_{ij}(x_1,\cdots,x_n) = \sum_{k} a_kx_1^{k_1}\cdots x_n^{k_n},
\end{align*}
for a multi-index  $k=(k_1,\cdots k_n)$ where $k_i$ $(i=1,\cdots,n)$ is non-negative integers.
\item  $m$ and $n$ are non-negative integers that are not simultaneously zero at the same time.
\end{itemize}
\end{lemma}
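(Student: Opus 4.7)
The plan is to reduce the Hessian in the canonical coordinates $(\rho_\theta, \rho_\theta U_\theta, G_\theta, H_\theta)$ to derivatives in the physical coordinates $(\rho_\theta, U_\theta, \Theta_\theta, q_\theta)$, where $\mathcal{S}_{Pr}$ has the explicit closed-form expression displayed in Proposition \ref{linear}. Writing $y=(\rho_\theta, \rho_\theta U_\theta, G_\theta, H_\theta)$ and $x=(\rho_\theta, U_\theta, \Theta_\theta, q_\theta)$, the chain rule gives
\[
\frac{\partial^2 \mathcal{S}_{Pr}}{\partial y_i \partial y_j} = \sum_{k,\ell} (J^{-1})_{ki}(J^{-1})_{\ell j} \frac{\partial^2 \mathcal{S}_{Pr}}{\partial x_k \partial x_\ell} + \sum_k \frac{\partial^2 x_k}{\partial y_i \partial y_j} \frac{\partial \mathcal{S}_{Pr}}{\partial x_k}.
\]
By Lemma \ref{Jaco}(2), each entry of $J^{-1}$ is a polynomial in $\rho_\theta, U_\theta, \Theta_\theta$ divided by a power of $\rho_\theta$; differentiating once more (via the quotient rule) preserves this same type, so $\partial^2 x_k/\partial y_i \partial y_j$ is again of the form (polynomial in $\rho_\theta, U_\theta, \Theta_\theta$) over a power of $\rho_\theta$.

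Next I would verify that every first- and second-order partial of $\mathcal{S}_{Pr}(\theta)$ in the physical variables already fits the target template. Writing $\mathcal{S}_{Pr}(\theta) = \mathcal{M}(\theta)\cdot R(\theta)$ with
\[
R(\theta) = 1 + \frac{1-Pr}{5}\,\frac{q_\theta\cdot(v-U_\theta)}{\rho_\theta T_\theta^{2}}\left(\frac{|v-U_\theta|^{2}}{2T_\theta}-\frac{5}{2}\right),
\]
each partial of $\mathcal{M}(\theta)$ in $\rho_\theta, U_\theta, \Theta_\theta$ produces $\mathcal{M}(\theta)$ times a polynomial in $\rho_\theta, U_\theta, \Theta_\theta, v-U_\theta$ over a power of $\rho_\theta T_\theta$ (cf.\ Lemma \ref{S diff}), because derivatives in $U_\theta$ pull out factors of $(v-U_\theta)/T_\theta$, while derivatives in $\Theta_\theta$ act through $T_\theta=\tfrac13\sum\Theta_{\theta ii}$ and generate $1/T_\theta$ factors only. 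Similarly, every partial of $R(\theta)$ is a polynomial in $\rho_\theta, U_\theta, \Theta_\theta, q_\theta, v-U_\theta, 1-Pr$ over a power of $\rho_\theta T_\theta$. Applying the Leibniz rule to $D^2(\mathcal{M}\cdot R)$ then shows that every second partial of $\mathcal{S}_{Pr}$ in the physical variables has the required form $\mathcal{P}/(\rho_\theta^{m}T_\theta^{n})\cdot \mathcal{M}(\theta)$.

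Finally I would combine the two ingredients: substitute the structural forms of $D^2_x \mathcal{S}_{Pr}$ and $D_x \mathcal{S}_{Pr}$ into the chain-rule expansion, multiply by the $\rho_\theta$-rational entries of $J^{-1}$ (and by the analogously rational second partials $\partial^2 x/\partial y^2$), and collect the result over a common denominator. The outcome for each index pair $(i,j)$ is exactly of the advertised form $\mathcal{P}_{ij}/(\rho_\theta^{m}T_\theta^{n})\cdot\mathcal{M}(\theta)$. For the condition that $m$ and $n$ are not simultaneously zero, note that second differentiation must either hit the exponential/normalization of $\mathcal{M}$ (producing a $T_\theta$ factor from $T_\theta^{-3/2}e^{-|v-U_\theta|^2/2T_\theta}$) or hit the denominator of $R$ (producing a $\rho_\theta$ or $T_\theta$ factor), and the only surviving "polynomial-only" contribution, $\partial^2\mathcal{M}/\partial\rho_\theta^2=0$, vanishes; when only first derivatives of $\mathcal{S}_{Pr}$ appear, the accompanying $\partial^2 x/\partial y^2$ always brings at least a $1/\rho_\theta$.

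The main obstacle is purely combinatorial bookkeeping: there are $13\times13$ entries in the Hessian, built from products of rational functions in $(\rho_\theta,U_\theta,\Theta_\theta)$ and derivatives of $\mathcal{M}(\theta)\cdot R(\theta)$, and one must ensure that no differentiation step introduces a factor outside the allowed class (no inverse polynomials in $v$, no new exponentials, and no $(v-U_\theta)$ in a denominator). Once one observes that derivatives of $\mathcal{M}(\theta)$ preserve the Gaussian shape—they only pull out polynomials in $v-U_\theta$ and inverse powers of $T_\theta$—and that derivatives of $R(\theta)$ stay within rational functions of the stated type, the bookkeeping closes and the claimed uniform representation follows.
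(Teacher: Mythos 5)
Your proposal is correct and follows essentially the same route as the paper: reduce the Hessian in $(\rho_{\theta},\rho_{\theta}U_{\theta},G_{\theta},H_{\theta})$ to the physical variables $(\rho_{\theta},U_{\theta},\Theta_{\theta},q_{\theta})$ via the chain rule and the explicit $J^{-1}$ of Lemma \ref{Jaco}, then use the explicit derivatives of $\mathcal{S}_{Pr}(\theta)=\mathcal{M}(\theta)R(\theta)$ as in Lemma \ref{S diff}. The only difference is presentational: where the paper computes the representative entries $(1,1)$ and $(1,2)$ and declares the rest similar, you argue once and for all that the class of functions of the form $\mathcal{P}\,\rho_{\theta}^{-m}T_{\theta}^{-n}\mathcal{M}(\theta)$ (and the $\rho_{\theta}$-rational entries of $J^{-1}$) is closed under the relevant differentiations and products, which also covers the ``$m,n$ not both zero'' clause that the paper leaves implicit.
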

\begin{proof}
Applying \eqref{changeofvar} twice,
\begin{multline*}
D^2_{(\rho_{\theta},\rho_{\theta} U_{\theta},G_{\theta},H_{\theta})}\mathcal{S}_{Pr}(\theta)=  \left( \frac{\partial(\rho_{\theta},\rho_{\theta} U_{\theta},G_{\theta},H_{\theta})} {\partial(\rho_{\theta},U_{\theta},\Theta_{\theta},q_{\theta})} \right)^{-1} \cr
\times \nabla_{(\rho_{\theta},U_{\theta},\Theta_{\theta},q_{\theta})}\left[\left( \frac{\partial(\rho_{\theta},\rho_{\theta} U_{\theta},G_{\theta},H_{\theta})} {\partial(\rho_{\theta},U_{\theta},\Theta_{\theta},q_{\theta})} \right)^{-1} \nabla_{(\rho_{\theta},U_{\theta},\Theta_{\theta},q_{\theta})} \mathcal{S}_{Pr}(\theta)\right].
\end{multline*}
For simplicity, we only consider the $(1,1)$ and $(1,2)$ components of $D^2_{(\rho_{\theta},\rho_{\theta} U_{\theta},G_{\theta},H_{\theta})}\mathcal{S}_{Pr}(\theta)$.
Let us define the quantity in the second line as $B$:
\begin{align*}
B=\nabla_{(\rho_{\theta},U_{\theta},\Theta_{\theta},q_{\theta})}\left[\left( \frac{\partial(\rho_{\theta},\rho_{\theta} U_{\theta},G_{\theta},H_{\theta})} {\partial(\rho_{\theta},U_{\theta},\Theta_{\theta},q_{\theta})} \right)^{-1} \nabla_{(\rho_{\theta},U_{\theta},\Theta_{\theta},q_{\theta})} \mathcal{S}_{Pr}(\theta)\right].
\end{align*}
The $(1,1)$ component of $D^2_{(\rho_{\theta},\rho_{\theta} U_{\theta},G_{\theta},H_{\theta})}\mathcal{S}_{Pr}(\theta)$ is determined by inner product of first row of $J^{-1}$ and first column of $B$. As we can see in Lemma \ref{Jaco} (2), the components of the first row of $J^{-1}$ are all zeros except the first component. Thus we only need to compute the $(1,1)$ component of $B$:
\begin{align*}
(B)_{11} &= \frac{\partial}{\partial \rho_{\theta}}\left[\left( \frac{\partial(\rho_{\theta},\rho_{\theta} U_{\theta},G_{\theta},H_{\theta})} {\partial(\rho_{\theta},U_{\theta},\Theta_{\theta},q_{\theta})} \right)^{-1} \nabla_{(\rho_{\theta},U_{\theta},\Theta_{\theta},q_{\theta})} \mathcal{S}_{Pr}(\theta)\right]_1 =\frac{\partial}{\partial \rho_{\theta}}\left(\frac{\partial \mathcal{S}_{Pr}(\theta) }{\partial \rho_{\theta}} \right).
\end{align*}
Applying the computation in Lemma \ref{S diff} (1) gives 
\begin{align*}
(B)_{11} &= \frac{\partial}{\partial \rho_{\theta}}\left(\frac{1}{\rho_{\theta}}\mathcal{M}(\theta) \right)= 0.
\end{align*}
Thus we have
\begin{align*}
\left\{D^2_{(\rho_{\theta},\rho_{\theta} U_{\theta},G_{\theta},H_{\theta})}\mathcal{S}_{Pr}(\theta)\right\}_{11}&=0.
\end{align*}
Similarly $(1,2)$ component of $D^2_{(\rho_{\theta},\rho_{\theta} U_{\theta},G_{\theta},H_{\theta})}\mathcal{S}_{Pr}(\theta)$ is determined by the inner product of the first row of $J^{-1}$ and second column of $B$. Since the components of first row of $J^{-1}$ are all zeros except the first component, we only need to compute $(1,2)$ component of $B$,
\begin{align*}
(B)_{12} &= \frac{\partial}{\partial \rho_{\theta}}\left[\left( \frac{\partial(\rho_{\theta},\rho_{\theta} U_{\theta},G_{\theta},H_{\theta})} {\partial(\rho_{\theta},U_{\theta},\Theta_{\theta},q_{\theta})} \right)^{-1} \nabla_{(\rho_{\theta},U_{\theta},\Theta_{\theta},q_{\theta})} \mathcal{S}_{Pr}(\theta)\right]_2.
\end{align*}
The second component in the square brackets is inner product of second row of $J^{-1}$ and $\nabla_{(\rho_{\theta},U_{\theta},\Theta_{\theta},q_{\theta})} \mathcal{S}_{Pr}(\theta)$. Thus we have 
\begin{align*}
(B)_{12}&=\frac{\partial}{\partial \rho_{\theta}}\left[-\frac{U_{\theta 1}}{\rho_{\theta}}\frac{\partial \mathcal{S}_{Pr}(\theta)}{\partial \rho_{\theta}}+\frac{1}{\rho_{\theta}}\frac{\partial \mathcal{S}_{Pr}(\theta)}{\partial U_{\theta 1}} \right].
\end{align*}
Combining the above computations, we obtain
\begin{align*}
\left\{D^2_{(\rho_{\theta},\rho_{\theta} U_{\theta},G_{\theta},H_{\theta})}\mathcal{S}_{Pr}(\theta)\right\}_{12}
&= \frac{\partial}{\partial \rho_{\theta}}\left(-\frac{U_{\theta 1}}{\rho_{\theta}}\frac{\partial \mathcal{S}_{Pr}(\theta)}{\partial \rho_{\theta}}+\frac{1}{\rho_{\theta}}\frac{\partial \mathcal{S}_{Pr}(\theta)}{\partial U_{\theta 1}} \right).
\end{align*}
Applying Lemma \ref{S diff}, it is equal to 
\begin{align*}
\left\{D^2_{(\rho_{\theta},\rho_{\theta} U_{\theta},G_{\theta},H_{\theta})}\mathcal{S}_{Pr}(\theta)\right\}_{12}&=\frac{\partial}{\partial \rho_{\theta}}\left(-\frac{U_{\theta 1}}{\rho_{\theta}^2}\mathcal{M}(\theta)+\frac{1}{\rho_{\theta}}\frac{\partial \mathcal{S}_{Pr}(\theta)}{\partial U_{\theta 1}} \right) \cr
&=\left(\frac{U_{\theta 1}}{\rho_{\theta}^3}\mathcal{M}(\theta)-\frac{1}{\rho_{\theta}
^2}\frac{\partial \mathcal{S}_{Pr}(\theta)}{\partial U_{\theta 1}}+\frac{1}{\rho_{\theta}}\frac{\partial^2 \mathcal{S}_{Pr}(\theta)}{\partial \rho_{\theta}\partial U_{\theta 1}} \right),
\end{align*}
where
\begin{align*}
\frac{\partial \mathcal{S}_{Pr}(\theta)}{\partial U_{\theta 1}}&=\frac{v_i-U_{\theta i}}{T_{\theta}}\left[1+\frac{1-Pr}{5}\frac{q_{\theta}\cdot (v-U_{\theta})}{\rho_{\theta} T_{\theta}^2}\left(\frac{|v-U_{\theta}|^2}{2T_{\theta}}-\frac{5}{2}\right)\right]\mathcal{M}(\theta) \cr
&-\left[\frac{1-Pr}{5}\frac{q_{\theta i}}{\rho_{\theta} T_{\theta}^2}\left(\frac{|v-U_{\theta}|^2}{2T_{\theta}}-\frac{5}{2}\right)+\frac{1-Pr}{5}\frac{q_{\theta}\cdot (v-U_{\theta})}{\rho_{\theta} T_{\theta}^2}\left(\frac{v_i-U_{\theta i}}{T_{\theta}}\right)\right]\mathcal{M}(\theta),
\end{align*}
and 
\begin{align*}
\frac{\partial^2 \mathcal{S}_{Pr}(\theta)}{\partial \rho_{\theta}\partial U_{\theta 1}}&=\frac{v_i-U_{\theta i}}{\rho_\theta T_\theta}\mathcal{M}(\theta).
\end{align*}
This shows that $(1,2)$ component of $D^2_{(\rho_{\theta},\rho_{\theta} U_{\theta},G_{\theta},H_{\theta})}\mathcal{S}_{Pr}(\theta)$ follows the proposed form of this Lemma. Other terms are similar. We omit it.
\end{proof}

It remains to linearize the collision frequency.
\begin{lemma} The general collision frequency \eqref{tau} is linearized as follows: 
\begin{align*}
\frac{1}{\tau} =\frac{1}{\tau_0}\left( 1  + \int_0^1 A_1(\theta) d\theta \left(\int_{\mathbb{R}^3} f \sqrt{m} dv\right)+  \int_0^1 A_2(\theta) d\theta \left(\int_{\mathbb{R}^3} f \frac{|v|^2-3}{\sqrt{6}}\sqrt{m} dv\right)\right),
\end{align*}
where
\begin{align*}
A_1(\theta)&=\left(1+\frac{|U_{\theta}|^2-3T_{\theta}+3}{3\rho_{\theta}}\right)\eta\rho^{\eta-1}_{\theta} T^w_{\theta}, \quad 
A_2(\theta)= w\rho^{\eta}_{\theta} T^{w-1}_{\theta}.
\end{align*}
\end{lemma}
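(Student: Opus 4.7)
The plan is to apply the fundamental theorem of calculus along the interpolation path $\theta \mapsto (\rho_\theta, U_\theta, \Theta_\theta, q_\theta)$ introduced in~\eqref{transmacro}. Setting $h(\theta) = \rho_\theta^\eta T_\theta^w$ with $3T_\theta = \sum_i \Theta_{\theta ii}$, one has $h(0) = 1$ and $h(1) = \rho^\eta T^w$, so
$$
\rho^\eta T^w \;=\; 1 + \int_0^1 h'(\theta)\,d\theta,
$$
and the task reduces to rewriting $h'(\theta)$ as $A_1(\theta)\int f\sqrt m\,dv + A_2(\theta)\int f\tfrac{|v|^2-3}{\sqrt 6}\sqrt m\,dv$.

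By the chain rule, $h'(\theta) = \eta\rho_\theta^{\eta-1}T_\theta^w\,\rho'_\theta + w\rho_\theta^\eta T_\theta^{w-1}\,T'_\theta$. The derivative $\rho'_\theta = \rho - 1 = \int f\sqrt m\,dv$ is immediate from \eqref{transmacro} and \eqref{rh}. For $T'_\theta$ I would start from the defining relation $\rho_\theta\Theta_{\theta ii} = 2G_{\theta ii} - \rho_\theta U_{\theta i}^2 + \rho_\theta$ (i.e.\ \eqref{G} written in the transitional variables), sum over $i$ to obtain
$$
3\rho_\theta T_\theta + \rho_\theta|U_\theta|^2 - 3\rho_\theta \;=\; 2\sum_i G_{\theta ii},
$$
and differentiate in $\theta$ using $G_{\theta ii}=\theta G_{ii}$. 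The key input is the second-moment identity
$$
2\sum_i G_{ii} \;=\; 3\rho(T-1) + \rho|U|^2 \;=\; \sqrt 6\int f\tfrac{|v|^2-3}{\sqrt 6}\sqrt m\,dv,
$$
obtained by directly computing $\int f(|v|^2-3)\sqrt m\,dv$ from $F = m+\sqrt m f$ and subtracting $3(\rho-1)$. This identity turns the $\sum_i G_{ii}$-piece of $T'_\theta$ into the second integral appearing in the statement, producing the coefficient $A_2(\theta) = w\rho_\theta^\eta T_\theta^{w-1}$.

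The remaining pieces of $h'(\theta)$, namely $\eta\rho_\theta^{\eta-1}T_\theta^w(\rho-1)$ together with the $\rho_\theta|U_\theta|^2$-contributions to $T'_\theta$, must be consolidated into a single multiple of $(\rho-1)$. For this I would exploit the algebraic relation
$$
|U_\theta|^2 - 3T_\theta + 3 \;=\; 2|U_\theta|^2 - \frac{2\sum_i G_{\theta ii}}{\rho_\theta},
$$
obtained by eliminating $3T_\theta$ via the boxed identity above, and simplify the derivative $(\rho_\theta|U_\theta|^2)' = \theta\rho^2|U|^2(\rho_\theta+1)/\rho_\theta^2$ using the elementary identity $2\rho_\theta-\theta(\rho-1) = \rho_\theta+1$ (which comes from $\rho_\theta = 1+\theta(\rho-1)$). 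Matching the resulting expression with the claimed form yields the factor $\bigl(1+\tfrac{|U_\theta|^2-3T_\theta+3}{3\rho_\theta}\bigr)\eta\rho_\theta^{\eta-1}T_\theta^w$ as $A_1(\theta)$. The main obstacle is purely this bookkeeping: one must track how the $(\rho-1)$-pieces generated by differentiating $\rho_\theta|U_\theta|^2$ interact with the correction arising from the nontrivial dependence of $T_\theta$ on $\rho_\theta$, and verify that they combine exactly into the prefactor claimed for $A_1$.
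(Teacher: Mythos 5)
Your route is in substance the same as the paper's: both expand $\rho_\theta^\eta T_\theta^w$ by the fundamental theorem of calculus along the interpolation \eqref{transmacro} and compute the $\theta$-derivative by the chain rule; the paper organizes the bookkeeping through the inverse Jacobian of $(\rho_\theta,U_\theta,T_\theta)\mapsto(\rho_\theta,\rho_\theta U_\theta,K_\theta)$ (formulas \eqref{c0}--\eqref{c3}), while you differentiate the trace relation $3\rho_\theta T_\theta+\rho_\theta|U_\theta|^2-3\rho_\theta=2\sum_i G_{\theta ii}$ directly. Your intermediate identities ($\rho_\theta'=\rho-1$ from \eqref{rh}, the second-moment identity $2\sum_iG_{ii}=\sqrt6\int f\tfrac{|v|^2-3}{\sqrt6}\sqrt m\,dv$, and $(\rho_\theta|U_\theta|^2)'=\theta\rho^2|U|^2(\rho_\theta+1)/\rho_\theta^2$) are all correct.

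The gap is the final consolidation step. The quantity $(\rho_\theta|U_\theta|^2)'=(\rho_\theta+1)\,U_\theta\cdot\rho U/\rho_\theta$ is a linear functional of the first moment $\rho U=\int_{\mathbb{R}^3} fv\sqrt m\,dv$, not of $\rho-1=\int_{\mathbb{R}^3} f\sqrt m\,dv$, and it cannot be absorbed into a bounded multiple of $\rho-1$ (take $\rho=1$, $U\neq0$). Carrying your own computation through honestly gives the three-term expansion
\begin{align*}
\frac{d}{d\theta}\big(\rho_\theta^\eta T_\theta^w\big)
=\Big(\eta\rho_\theta^{\eta-1}T_\theta^w+\tfrac{|U_\theta|^2-3T_\theta+3}{3\rho_\theta}\,w\rho_\theta^{\eta}T_\theta^{w-1}\Big)\int_{\mathbb{R}^3} f\sqrt m\,dv
-\tfrac{2w\rho_\theta^{\eta}T_\theta^{w-1}}{3\rho_\theta}\,U_\theta\cdot\int_{\mathbb{R}^3} fv\sqrt m\,dv
+\tfrac{\sqrt6\,w\rho_\theta^{\eta}T_\theta^{w-1}}{3\rho_\theta}\int_{\mathbb{R}^3} f\tfrac{|v|^2-3}{\sqrt6}\sqrt m\,dv,
\end{align*}
in which the cross coefficient $\tfrac{|U_\theta|^2-3T_\theta+3}{3\rho_\theta}$ (it comes from $\partial T_\theta/\partial\rho_\theta$) multiplies $w\rho_\theta^\eta T_\theta^{w-1}$ rather than $\eta\rho_\theta^{\eta-1}T_\theta^w$, the $K$-coefficient carries the extra factor $\sqrt6/(3\rho_\theta)$, and an $\int fv\sqrt m\,dv$-term is unavoidable; the paper's own ingredients \eqref{c1}--\eqref{c3} produce exactly the same structure (the middle block of \eqref{c2} pairs the second entry of \eqref{c1} with a nonzero coefficient). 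So your assertion that ``the $\rho_\theta|U_\theta|^2$-contributions consolidate into a single multiple of $(\rho-1)$'' and that matching yields the displayed $A_1$, $A_2$ is precisely the step that would not verify; indeed the displayed identity, read literally, fails already for $\eta=0$, $w=1$, $\rho=1$, $U\neq0$, so the coefficients in the statement should be regarded as schematic. For the paper's later use (bounding $1/\tau-1/\tau_0$ by bounded, $\theta$-integrated coefficients times moments of $f$, via Lemma \ref{macrotheta}) your expansion with the extra velocity-moment term serves equally well, but you should state it in that corrected form rather than force the bookkeeping to close to $A_1$, $A_2$ as printed.
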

\begin{proof}
As in the proof of Proposition \ref{linear}, we first define the transition of the macroscopic fields:
\begin{align*}
\rho_{\theta} &= \theta \rho +(1-\theta), \quad
\rho_{\theta}U_{\theta} = \theta \rho U, \quad
K_{\theta}= \theta K,
\end{align*}
where 
\begin{align*}
K_{\theta} &= \frac{3\rho_{\theta} T_{\theta} + \rho_{\theta} |U_{\theta}|^2-3\rho_{\theta}}{\sqrt{6}},
\end{align*}
and the transitional collision frequency depending on $(\rho_{\theta},U_{\theta},T_{\theta})$:
\begin{align*}
A(\theta) = \frac{1}{\tau_0}\rho^{\eta}_{\theta} T^w_{\theta}.
\end{align*}
We then expand $A(\theta)$ as 
\begin{align*}
A(1) = A(0)+ \int_0^1 A'(\theta)d\theta.
\end{align*}
Then the chain rule gives 
\begin{align}\label{c0}
A'(\theta) = \frac{1}{\tau_0}\left(\frac{d \rho_{\theta}}{d\theta },\frac{d \rho_{\theta}U_{\theta}}{d\theta },\frac{d K_{\theta}}{d\theta }\right) \left[\frac{\partial(\rho_{\theta},\rho_{\theta}U_{\theta},K_{\theta})}{\partial(\rho_{\theta},U_{\theta},T_{\theta})}\right]^{-1}\nabla_{\left(\rho_{\theta},U_{\theta},T_{\theta}\right)}\rho^{\eta}_{\theta} T^w_{\theta},
\end{align}
where each component of \eqref{c0} can be computed by the following three equality:
\begin{align}\label{c1}
\begin{split}
\left(\frac{d \rho_{\theta}}{d\theta },\frac{d \rho_{\theta}U_{\theta}}{d\theta },\frac{d K_{\theta}}{d\theta }\right)
&= \left( \int_{\mathbb{R}^3} f \sqrt{m} dv,\int_{\mathbb{R}^3} f v\sqrt{m} dv, \int_{\mathbb{R}^3} f \frac{|v|^2-3}{\sqrt{6}}\sqrt{m} dv\right) ,
\end{split}
\end{align}
and
\begin{align}\label{c2}
\begin{split}
\left[\frac{\partial(\rho_{\theta},\rho_{\theta}U_{\theta},K_{\theta})}{\partial(\rho_{\theta},U_{\theta},T_{\theta})}\right]^{-1} = \left[ \begin{array}{ccc} 1 & 0 & 0 \\
-\frac{U_{\theta}}{\rho_{\theta}} & \frac{1}{\rho_{\theta}}I_3 & 0  \\
\frac{|U_{\theta}|^2-3T_{\theta}+3}{3\rho_{\theta}} & -\frac{2}{3}\frac{U_{\theta}}{\rho_{\theta}}& \sqrt{\frac{2}{3}}\frac{1}{\rho_{\theta}} \end{array} \right],
\end{split}
\end{align}
and
\begin{align}\label{c3}
\nabla_{\left(\rho_{\theta},U_{\theta},T_{\theta}\right)}\rho^{\eta}_{\theta} T^w_{\theta} = \left(\eta\rho^{\eta-1}_{\theta} T^w_{\theta}, 0^3, w\rho^{\eta}_{\theta} T^{w-1}_{\theta} \right).
\end{align}
Substituting \eqref{c1}-\eqref{c3} into \eqref{c0}, we get the desired result.
\end{proof}

\subsection{Linearized Shakhov model} We are ready to derive the linearized Shakhov model. We insert $F=m+\sqrt{m}f$ in \eqref{Shakhov} and apply Proposition \ref{linear} and Lemma \ref{non-linear poly}, to get
\begin{align}\label{perturb}
\begin{split}
\partial_tf+v\cdot \nabla_xf &= \frac{1}{\tau_0}L_{Pr}f+\Gamma(f) , \cr
f(x,v,0)&=f_0(x,v),
\end{split}
\end{align}
where $f_0(x,v)= (F_0(x,v)-m)/\sqrt{m}$. The linear operator $L_{Pr}$ is 
\begin{align*}
L_{Pr}f = P_{Pr}f-f,
\end{align*}
where the projection operator $P_{Pr}$ is defined in Definition \ref{P_{Pr}f}, and the non-linear term is decomposed as
\begin{align}\label{non linear}
\Gamma(f) = \sum_{i=1}^3 \Gamma_i(f),
\end{align}
where
\begin{align*}
\Gamma_1(f)&= \left(\frac{1}{\tau}-\frac{1}{\tau_0} \right)L_{Pr}f, \cr
\Gamma_2(f)&= \frac{1}{\tau_0}\frac{1}{\sqrt{m}}\sum_{1 \leq i,j \leq 13} \int_0^1 \left\{D^2_{(\rho_{\theta},\rho_{\theta} U_{\theta},G_{\theta},H_{\theta})}\mathcal{S}_{Pr}(\theta)\right\}_{ij}(1-\theta)d\theta \langle f, e_i \rangle_{L^2_v}\langle f, e_j \rangle_{L^2_v},\cr
\Gamma_3(f)&= \left(\frac{1}{\tau}-\frac{1}{\tau_0} \right)\frac{1}{\sqrt{m}}\sum_{1 \leq i,j \leq 13} \int_0^1 \left\{D^2_{(\rho_{\theta},\rho_{\theta} U_{\theta},G_{\theta},H_{\theta})}\mathcal{S}_{Pr}(\theta)\right\}_{ij}(1-\theta)d\theta \langle f, e_i \rangle_{L^2_v}\langle f, e_j \rangle_{L^2_v}.
\end{align*}

The conservation laws \eqref{conserv} are rewritten as follows:
\begin{align}\label{conservf}
\begin{split}
\int_{\mathbb{T}^3\times\mathbb{R}^3} f(x,v,t)\sqrt{m} dvdx &= \int_{\mathbb{T}^3\times\mathbb{R}^3} f_0(x,v)\sqrt{m} dvdx, \cr
\int_{\mathbb{T}^3\times\mathbb{R}^3} f(x,v,t)v\sqrt{m} dvdx &= \int_{\mathbb{T}^3\times\mathbb{R}^3} f_0(x,v)v\sqrt{m} dvdx, \cr
\int_{\mathbb{T}^3\times\mathbb{R}^3} f(x,v,t)|v|^2\sqrt{m} dvdx &= \int_{\mathbb{T}^3\times\mathbb{R}^3} f_0(x,v)|v|^2\sqrt{m} dvdx.
\end{split}
\end{align}

\subsection{Properties of the linear term}
In this section, we establish the coercivity of $L_{Pr}$. We observe the dichotomy in the dissipative nature of $L_{Pr}$ 
between the case $Pr>0$ and $Pr=0$. We first define the following basis: 
\begin{align}\label{bare}
\bar{e}_1&=\sqrt{m}, \quad \bar{e}_{i+1}=v_i\sqrt{m}, \quad \bar{e}_{5}=\frac{|v|^2-3}{\sqrt{6}}\sqrt{m}, \quad 
\bar{e}_{i+5} = \frac{v_i|v|^2-5v_i}{\sqrt{10}}\sqrt{m},
\end{align}
for $i=1,2,3$, and write $P_{Pr}f$ (See Definition \ref{P_{Pr}f}) as
\begin{align*}
P_{Pr}f&=P_cf+(1-Pr)P_{nc}f\cr
&=\sum_{1 \leq i \leq 5} \langle f, \bar{e}_i \rangle_{L^2_v} \bar{e}_i+ (1-Pr) \sum_{6 \leq i \leq 8} \langle f, \bar{e}_i \rangle_{L^2_v} \bar{e}_i.
\end{align*}
\begin{lemma}\label{P0Ps} The projection operator $P_c$ and $P_{nc}$ satisfy the following properties:  
\begin{enumerate}
\item $P_c$ and $P_{nc}$ are orthonormal projection.
\begin{align*}
P_c^2=P_c,\quad  P_{nc}^2=P_{nc}.
\end{align*}
\item $P_c$ and $P_{nc}$ are orthogonal.
\begin{align*}
P_c \perp P_{nc}.
\end{align*}
\end{enumerate}
\end{lemma}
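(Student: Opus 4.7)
The plan is to reduce both claims to the single computational fact that the eight functions $\{\bar e_i\}_{i=1}^{8}$ in \eqref{bare} form an orthonormal family in $L^2_v$. Once that is established, the definition in \eqref{bare} rewrites $P_c f = \sum_{i=1}^{5}\langle f,\bar e_i\rangle_{L^2_v}\bar e_i$ and $P_{nc} f = \sum_{i=6}^{8}\langle f,\bar e_i\rangle_{L^2_v}\bar e_i$, so $P_c$ and $P_{nc}$ become orthogonal projections onto disjoint subspaces of an orthonormal family, and all of Lemma \ref{P0Ps} is formal.

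First I would verify $\langle \bar e_i,\bar e_j\rangle_{L^2_v}=\delta_{ij}$ by Gaussian moment calculations against $m(v)$. The relevant moments are the standard ones $\int v_i v_j m\,dv=\delta_{ij}$, $\int |v|^2 m\,dv=3$, $\int |v|^4 m\,dv=15$, $\int v_iv_j|v|^2 m\,dv=5\delta_{ij}$ and $\int v_i^2|v|^4 m\,dv=35$, together with the vanishing of all odd moments by parity. These respectively handle: the diagonal normalizations of $\bar e_1,\dots,\bar e_4$; the normalization $\|\bar e_5\|_{L^2_v}^2=\frac{1}{6}(15-18+9)=1$; the cross terms $\langle \bar e_{i+1},\bar e_{j+5}\rangle_{L^2_v}\propto 5\delta_{ij}-5\delta_{ij}=0$; and $\|\bar e_{i+5}\|_{L^2_v}^2=\frac{1}{10}(35-50+25)=1$. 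The remaining cross inner products $\langle \bar e_1,\bar e_{i+5}\rangle_{L^2_v}$, $\langle \bar e_5,\bar e_{i+5}\rangle_{L^2_v}$, and $\langle \bar e_{i+5},\bar e_{j+5}\rangle_{L^2_v}$ for $i\neq j$ vanish directly by odd parity in the relevant variable.

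With the orthonormality in hand, for (1) I would compute
\begin{align*}
P_c^2 f=\sum_{i=1}^{5}\langle P_c f,\bar e_i\rangle_{L^2_v}\bar e_i
=\sum_{i,j=1}^{5}\langle f,\bar e_j\rangle_{L^2_v}\langle \bar e_j,\bar e_i\rangle_{L^2_v}\bar e_i
=\sum_{i=1}^{5}\langle f,\bar e_i\rangle_{L^2_v}\bar e_i=P_c f,
\end{align*}
and the same manipulation on the index range $6\leq i\leq 8$ gives $P_{nc}^2=P_{nc}$. For (2), using the same expansion,
\begin{align*}
\langle P_c f, P_{nc} g\rangle_{L^2_v}
=\sum_{i=1}^{5}\sum_{j=6}^{8}\langle f,\bar e_i\rangle_{L^2_v}\langle g,\bar e_j\rangle_{L^2_v}\langle \bar e_i,\bar e_j\rangle_{L^2_v}=0,
\end{align*}
since every pairing is across the two disjoint index sets.

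The only real ``work'' is bookkeeping the Gaussian moments; there is no conceptual obstacle. The choice of normalizing constants $\sqrt{6}$ and $\sqrt{10}$ in \eqref{bare} is exactly engineered so that the orthonormality holds, which is why it is important to work with $\bar e_i$ rather than the unnormalized $e_i$ of Proposition \ref{linear} when stating the projection properties.
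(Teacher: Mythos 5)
Your proposal is correct and follows essentially the same route as the paper: the paper likewise reduces both statements to the orthonormality of $\{\bar e_1,\dots,\bar e_5\}$ and $\{\bar e_6,\bar e_7,\bar e_8\}$ together with the cross-orthogonality $\langle\bar e_i,\bar e_j\rangle_{L^2_v}=0$ for $1\le i\le 5$, $6\le j\le 8$, verified by direct Gaussian moment computation. Your explicit moment bookkeeping ($15$, $5\delta_{ij}$, $35$, etc.) and the formal projection identities simply spell out what the paper leaves as "a direct computation."
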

\begin{proof}
The first statement follows from that each of the following:
\[
\{\bar{e}_1,\,\bar{e}_2,\,\bar{e}_3,\,\bar{e}_4,\,\bar{e}_5\},
\]
and
\[
\{\bar{e}_6,\,\bar{e}_7,\,\bar{e}_8\},
\]
forms an orthonormal basis, while the second statement is derived from  
\begin{align*}
\langle \bar{e}_i,\bar{e}_{j} \rangle_{L^2_v} = 0\quad (1\leq i\leq 5,~~6\leq j\leq 8),
\end{align*}
which can be checked through a direct computation.
\end{proof}
In the following proposition, we prove the main result of this section, namely the dichotomy between $Pr>0$ and $Pr=0$ in the dissipative property of the linearized Shakhov operator. We note that the degeneracy of the estimate is stronger in the case of $Pr=0$.
\begin{proposition}\label{dichotomy} \noindent$(1)$ In the case $Pr>0$,  $L_{Pr}f$ satisfies 
\begin{align*}
\langle L_{Pr}f,f \rangle_{L^2_v} \leq -\min\{Pr,1\} \|(I-P_c)f \|_{L^2_v}^2.
\end{align*}
\noindent$(2)$ When $Pr= 0$, $L_{Pr}$ satisfies 
	\begin{align*}
		\langle L_{Pr}f,f \rangle_{L^2_{x,v}} = -\|(I-P_{Pr})f\|_{L^2_{x,v}}^2= -\|(I-P_c-P_{nc})f\|_{L^2_{x,v}}^2.
	\end{align*}
\end{proposition}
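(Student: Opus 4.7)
The plan is to reduce everything to an orthogonal-projection calculation based on Lemma \ref{P0Ps}. Since $P_c$ and $P_{nc}$ are orthogonal projections onto mutually orthogonal subspaces (with orthonormal bases $\{\bar e_1,\dots,\bar e_5\}$ and $\{\bar e_6,\bar e_7,\bar e_8\}$ respectively), their sum $P_c+P_{nc}$ is itself an orthogonal projection onto an $8$-dimensional subspace, so $I-P_c-P_{nc}$ is the orthogonal projection onto its complement. Any $f$ then admits the orthogonal decomposition
\begin{equation*}
f=P_cf+P_{nc}f+(I-P_c-P_{nc})f,
\end{equation*}
with the Pythagorean identity $\|f\|_{L^2_v}^2=\|P_cf\|_{L^2_v}^2+\|P_{nc}f\|_{L^2_v}^2+\|(I-P_c-P_{nc})f\|_{L^2_v}^2$, and likewise $\|(I-P_c)f\|_{L^2_v}^2=\|P_{nc}f\|_{L^2_v}^2+\|(I-P_c-P_{nc})f\|_{L^2_v}^2$.

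For part $(2)$, I would simply plug $Pr=0$ into the definition of $L_{Pr}$ to obtain $L_0 f=P_cf+P_{nc}f-f=-(I-P_c-P_{nc})f$. Taking the $L^2_{x,v}$ inner product with $f$ and using that $I-P_c-P_{nc}$ is a self-adjoint orthogonal projection yields the claimed equality $\langle L_0 f,f\rangle_{L^2_{x,v}}=-\|(I-P_c-P_{nc})f\|_{L^2_{x,v}}^2$ directly, which is also $-\|(I-P_{Pr})f\|_{L^2_{x,v}}^2$ when $Pr=0$.

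For part $(1)$, the plan is to rewrite the linear operator against the three-piece decomposition above. A short computation gives
\begin{equation*}
L_{Pr}f=P_cf+(1-Pr)P_{nc}f-f=-Pr\,P_{nc}f-(I-P_c-P_{nc})f.
\end{equation*}
Pairing with $f$ and exploiting mutual orthogonality of $P_cf$, $P_{nc}f$, and $(I-P_c-P_{nc})f$ collapses all cross terms, leaving
\begin{equation*}
\langle L_{Pr}f,f\rangle_{L^2_v}=-Pr\,\|P_{nc}f\|_{L^2_v}^2-\|(I-P_c-P_{nc})f\|_{L^2_v}^2.
\end{equation*}
Bounding each coefficient below by $\min\{Pr,1\}$ and invoking $\|(I-P_c)f\|_{L^2_v}^2=\|P_{nc}f\|_{L^2_v}^2+\|(I-P_c-P_{nc})f\|_{L^2_v}^2$ gives the desired inequality.

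The proof is essentially bookkeeping; the only genuine content is the orthogonality statement supplied by Lemma \ref{P0Ps}, so the main thing to be careful about is verifying that the bases listed in \eqref{bare} really are orthonormal and mutually orthogonal (which is the normalization purpose behind the prefactors $1/\sqrt{6}$ and $1/\sqrt{10}$ and the shift by $5v_i$ in $\bar e_{i+5}$). Once that is checked, everything reduces to a three-line algebraic identity.
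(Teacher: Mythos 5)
Your proposal is correct, and it rests on the same pillar as the paper's proof, namely the orthogonality facts of Lemma \ref{P0Ps}; part $(2)$ is essentially identical to the paper's argument. For part $(1)$, however, your bookkeeping differs in a way worth noting: you diagonalize $L_{Pr}$ against the three-way orthogonal decomposition $f=P_cf+P_{nc}f+(I-P_c-P_{nc})f$ and obtain the \emph{exact} identity $\langle L_{Pr}f,f\rangle_{L^2_v}=-Pr\,\|P_{nc}f\|_{L^2_v}^2-\|(I-P_c-P_{nc})f\|_{L^2_v}^2$, after which the bound $-\min\{Pr,1\}\|(I-P_c)f\|_{L^2_v}^2$ follows from the Pythagorean identity $\|(I-P_c)f\|_{L^2_v}^2=\|P_{nc}f\|_{L^2_v}^2+\|(I-P_c-P_{nc})f\|_{L^2_v}^2$ with no case distinction. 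The paper instead writes $\langle L_{Pr}f,f\rangle_{L^2_v}=-\|(I-P_c)f\|_{L^2_v}^2+(1-Pr)\langle P_{nc}f,f\rangle_{L^2_v}$, proves $0\leq\langle P_{nc}f,f\rangle_{L^2_v}\leq\|(I-P_c)f\|_{L^2_v}^2$, and splits into the cases $0<Pr\leq1$ and $Pr>1$. Your route is marginally cleaner and slightly stronger, since the exact identity makes transparent precisely where dissipation is lost as $Pr\to0$ (the heat-flux direction is damped with rate $Pr$), which is the dichotomy the paper wants to highlight; the paper's route reaches only the stated inequality. Your only implicit obligations are that $P_{nc}$ and $I-P_c-P_{nc}$ are self-adjoint (true, as they are orthogonal projections onto spans of the orthonormal $\bar e_i$) and the orthonormality of the basis \eqref{bare}, which you correctly flag as the one thing to verify.
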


\begin{proof}
\noindent(1) By an explicit computation, we have 
\begin{align*}
\langle L_{Pr}f,f \rangle_{L^2_v} &= \langle P_cf-f+(1-Pr)P_{nc}f,f \rangle_{L^2_v} \cr
&= \langle P_cf-f,f \rangle_{L^2_v} + (1-Pr) \langle P_{nc}f,f \rangle_{L^2_v}.
\end{align*}
Since $\bar{e}_1,\cdots,\bar{e}_5$ constitute an orthonormal basis, we have
\begin{align*}
\langle P_cf-f,f \rangle_{L^2_v} &=- \langle (I-P_c)f,f \rangle_{L^2_v}\cr
&=- \langle (I-P_c)f,(I-P_c)f \rangle_{L^2_v}-\langle (I-P_c)f,P_cf \rangle_{L^2_v} \cr
&=-\|(I-P_c)f\|_{L^2_v}^2,
\end{align*}
which implies
\begin{align}\label{L_{Pr}f,f}
\begin{split}
\langle L_{Pr}f,f \rangle_{L^2_v} &=  -\|(I-P_c)f\|_{L^2_v}^2 + (1-Pr) \langle P_{nc}f,f \rangle_{L^2_v}.
\end{split}
\end{align}
Applying the property $P_{nc}\perp P_c$ in Lemma \ref{P0Ps} (2), we have 
\begin{align*}
\langle P_{nc}f,f \rangle_{L^2_v} &=\langle P_{nc}(P_cf+(I-P_c)f),P_cf+(I-P_c)f \rangle_{L^2_v} \cr
&= \langle P_{nc}(I-P_c)f,(I-P_c)f \rangle_{L^2_v} \cr
&= \| P_{nc}(I-P_c)f\|_{L^2_v}^2,
\end{align*}
which gives 
\begin{align}\label{Psf,f}
0 \leq \langle P_{nc}f,f \rangle_{L^2_v} \leq \| (I-P_c)f\|_{L^2_v}^2.
\end{align}
When $0<Pr\leq 1$, substituting \eqref{Psf,f} in \eqref{L_{Pr}f,f} yields 
\begin{align*}
	\langle L_{Pr}f,f \rangle_{L^2_v} \leq -Pr \| (I-P_c)f\|_{L^2_v}^2.
\end{align*}
In the case $1<Pr$, since $\langle P_{nc}f,f \rangle_{L^2_v}$ is non-negative, we can ignore the second term in the R.H.S of \eqref{L_{Pr}f,f}, to obtain
\begin{align*}
	\langle L_{Pr}f,f \rangle_{L^2_v} \leq - \| (I-P_c)f\|_{L^2_v}^2.
\end{align*}
We combine the above two inequalities to get the desired result.\newline

\noindent(2) Since $P_{Pr}f=P_cf+P_{nc}f$ is a projection operator onto the space spanned by the $8$-dimensional orthonormal basis $\{\bar{e}_i\}_{1\leq i \leq 8 }$, we have
\begin{align*}
	\langle L_{Pr}f,f \rangle_{L^2_{x,v}} &=- \langle (I-P_{Pr})f,f \rangle_{L^2_{x,v}}\cr
	&=- \langle (I-P_{Pr})f,(I-P_{Pr})f \rangle_{L^2_{x,v}}-\langle (I-P_{Pr})f,P_{Pr}f \rangle_{L^2_{x,v}} \cr
	&=-\|(I-P_{Pr})f\|_{L^2_{x,v}}^2.
\end{align*}
\end{proof}

\begin{lemma}\label{ker} When $Pr>0$, the kernel of the linear operator $L_{Pr}$ is given by the following 5-dimensional space:
\begin{align*}
Ker{L} = span\{\sqrt{m},v\sqrt{m},|v|^2\sqrt{m}\},
\end{align*}
while in the case of $Pr=0$,  $L_{Pr}$ has a larger kernel spanned by the following 8 functions: 
\begin{align*}
	Ker{L}= span\{\sqrt{m},v\sqrt{m},|v|^2\sqrt{m},v|v|^2\sqrt{m} \}.
\end{align*}
\end{lemma}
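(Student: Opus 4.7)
The plan is to read off the kernel directly from the definition $L_{Pr}f = P_cf + (1-Pr)P_{nc}f - f$, using the coercivity identities of Proposition \ref{dichotomy} for one inclusion and the orthogonality $P_c \perp P_{nc}$ of Lemma \ref{P0Ps}(2) for the other.

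For the case $Pr > 0$, I would argue the two inclusions separately. If $L_{Pr}f = 0$, then pairing with $f$ and invoking Proposition \ref{dichotomy}(1) gives $0 = \langle L_{Pr}f,f\rangle_{L^2_v} \leq -\min\{Pr,1\}\|(I-P_c)f\|_{L^2_v}^2$, which forces $f = P_cf$, so $f$ lies in the five-dimensional span of $\{\bar{e}_1,\dots,\bar{e}_5\}$. Conversely, for any $f = P_cf$, the orthogonality $\langle \bar{e}_i,\bar{e}_j\rangle_{L^2_v} = 0$ for $1\leq i \leq 5$ and $6\leq j\leq 8$ supplied by Lemma \ref{P0Ps}(2) gives $P_{nc}f = P_{nc}(P_cf) = 0$, so $L_{Pr}f = P_cf - f = 0$ by idempotence of $P_c$. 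It remains to observe that $\mathrm{span}\{\bar{e}_1,\dots,\bar{e}_5\} = \mathrm{span}\{\sqrt{m},v\sqrt{m},|v|^2\sqrt{m}\}$, which follows from \eqref{bare} since $\bar{e}_5$ is a linear combination of $\sqrt{m}$ and $|v|^2\sqrt{m}$.

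For the degenerate case $Pr = 0$, the argument is even more direct. Proposition \ref{dichotomy}(2) gives $\langle L_0 f,f\rangle_{L^2_{x,v}} = -\|(I - P_c - P_{nc})f\|_{L^2_{x,v}}^2$, so $L_0 f = 0$ is equivalent to $f = (P_c + P_{nc})f$, i.e.\ $f$ lies in the eight-dimensional span of $\{\bar{e}_1,\dots,\bar{e}_8\}$. The analogous change of basis, now absorbing the constant shifts in both $\bar{e}_5$ and $\bar{e}_6,\bar{e}_7,\bar{e}_8$, identifies this span with $\mathrm{span}\{\sqrt{m},v\sqrt{m},|v|^2\sqrt{m},v|v|^2\sqrt{m}\}$. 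There is no serious obstacle here: once Proposition \ref{dichotomy} and Lemma \ref{P0Ps} are in hand, the kernel identification reduces to pure bookkeeping between the orthonormal basis $\{\bar{e}_i\}$ and the natural moment basis appearing in the statement.
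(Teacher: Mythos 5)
Your proposal is correct and follows the same route as the paper, whose proof of this lemma is simply the observation that it follows directly from Proposition \ref{dichotomy}; you have merely spelled out the bookkeeping (the coercivity identity forcing $f=P_cf$, respectively $f=(P_c+P_{nc})f$, the converse inclusion via $P_{nc}P_c=0$ from Lemma \ref{P0Ps}, and the change of basis from $\{\bar{e}_i\}$ to the moment basis) that the paper leaves implicit. No gaps.
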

\begin{proof}
This follows directly from Proposition \ref{dichotomy}.
\end{proof}

\section{Local solution}
In this section, we construct the local-in-time classical solution. We first estimate macroscopic fields $\rho$, $U$, $\Theta$ and $q$.
\subsection{Estimates for the macroscopic fields}
\begin{lemma}\label{macro} Let $N\geq3$. For  sufficiently small $\mathcal{E}(t)$, there exist  positive constants $C$ such that
\begin{align*}
&(1) \ |\rho(x,t)-1|\leq C\sqrt{\mathcal{E}(t)},	\cr
&(2) \ |U(x,t)|\leq C\sqrt{\mathcal{E}(t)},	\cr
&(3) \ |\Theta_{ij}(x,t)-\delta_{ij}| \leq C\sqrt{\mathcal{E}(t)}, \cr
&(4) \ |q_i(x,t)|\leq C\sqrt{\mathcal{E}(t)} ,
\end{align*}
for $1\leq i,j \leq 3$.
\end{lemma}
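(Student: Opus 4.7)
The plan is to express each macroscopic quantity as the sum of its equilibrium value and an $L^2_v$-pairing of $f$ with a polynomial-times-Gaussian weight, then bound those pairings by Cauchy--Schwarz, and finally upgrade the bound to an $L^\infty_x$ estimate through a Sobolev embedding. Substituting $F = m + \sqrt{m} f$ into the definitions \eqref{macro quantity} gives directly
\begin{align*}
\rho - 1 = \langle f,\sqrt{m}\rangle_{L^2_v},\qquad \rho U_i = \langle f, v_i\sqrt{m}\rangle_{L^2_v},
\end{align*}
and, after expanding $(v_i-U_i)(v_j-U_j)$ and $(v_i-U_i)|v-U|^2$ in powers of $v$ and $U$, polynomial identities expressing $\rho\Theta_{ij}$ and $q_i$ as finite combinations of pairings $\langle f, v^{\gamma}\sqrt{m}\rangle_{L^2_v}$ with coefficients that are polynomial in $\rho$, $U$, and $\Theta$. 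Identities of this form already appear in \eqref{viv2}.

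Cauchy--Schwarz together with the finiteness of every Gaussian moment bounds each such pairing by $C\|f(x,\cdot,t)\|_{L^2_v}$ pointwise in $x$. For the uniform-in-$x$ control, the Sobolev embedding $H^2(\mathbb{T}^3)\hookrightarrow L^\infty(\mathbb{T}^3)$ combined with $N\geq 3$ yields
\begin{align*}
\sup_{x\in\mathbb{T}^3}\|f(x,\cdot,t)\|_{L^2_v}^2 \;\leq\; C\sum_{|\alpha|\leq 2}\|\partial^{\alpha}_x f(t)\|_{L^2_{x,v}}^2 \;\leq\; C\,\mathcal{E}(t),
\end{align*}
where it is enough to take $\alpha$ of the form $(0,\alpha_1,\alpha_2,\alpha_3)$, which is contained in the energy functional \eqref{energy}. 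This immediately gives (1). For (2), since $|\rho-1|\leq C\sqrt{\mathcal{E}}$ the density stays above $1/2$ once $\mathcal{E}$ is small, so dividing $\rho U_i$ by $\rho$ delivers the bound on $U$.

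Items (3) and (4) are then handled by bootstrapping. Once $|\rho-1|$ and $|U|$ are of order $\sqrt{\mathcal{E}}$, the nonlinear remainder terms in the expressions for $\rho\Theta_{ij}$ and $q_i$---terms such as $\rho U_iU_j$ or $U_j\int F v_i v_j\,dv$---are of order $\mathcal{E}$ and hence dominated by $\sqrt{\mathcal{E}}$ for small $\mathcal{E}$. The only point requiring real care is the bookkeeping in the expansion of $q_i$, where several mixed terms arise; but since every such remainder is strictly higher order than the principal pairing $\langle f, v_i|v|^2\sqrt{m}\rangle_{L^2_v}$, no genuine obstacle is expected, and the argument closes by absorbing these contributions into the $C\sqrt{\mathcal{E}(t)}$ on the right-hand side.
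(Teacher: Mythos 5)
Your proposal is correct and follows essentially the same route as the paper: expand $F=m+\sqrt m f$ in the moment identities \eqref{rho} and \eqref{viv2}, bound each velocity pairing by Cauchy--Schwarz (H\"older) against Gaussian moments, pass to $\sup_x$ via $H^2(\mathbb{T}^3)\hookrightarrow L^\infty(\mathbb{T}^3)$ using $N\geq 3$, and then bootstrap (1)--(2), together with the resulting lower bound on $\rho$, into the estimates for $\Theta_{ij}$ and $q_i$, absorbing the quadratic and cubic remainders into $C\sqrt{\mathcal{E}(t)}$. This matches the paper's proof of Lemma \ref{macro} in both structure and detail.
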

\begin{proof} (1) Since
\begin{align}\label{rho}
\rho=\int_{\mathbb{R}^3}m+\sqrt{m}f dv = 1+\int_{\mathbb{R}^3}\sqrt{m}f dv .
\end{align}
The H\"{o}lder inequality and the Sobolev embedding $H^2 \subset\subset L^{\infty}$ give
\begin{align*}
|\rho-1| \leq C\sup_{x\in\mathbb{T}^3}\|f\|_{L^2_v}\leq \sum_{|\alpha|\leq 2}\| \partial^{\alpha} f \|_{L^2_{x,v}} \leq C\sqrt{\mathcal{E}(t)}.
\end{align*}
(2) We write the bulk velocity $U$ as
\begin{align*}
	\rho U=\int_{\mathbb{R}^3}(m+\sqrt{m}f)v dv = \int_{\mathbb{R}^3}\sqrt{m}fv dv.
\end{align*}
Then, the H\"{o}lder inequality and the Sobolev embedding, together with the lower bound of $\rho$ in (1) yields
\begin{align*}
|U| \leq \frac{\left(\int_{\mathbb{R}^3}|f|^2dv\right)^\frac{1}{2}\left(\int_{\mathbb{R}^3}m|v|^2dv\right)^\frac{1}{2}}{1-C\sqrt{\mathcal{E}(t)}}\leq \frac{C\sqrt{\mathcal{E}(t)}}{1-C\sqrt{\mathcal{E}(t)}} \leq C\sqrt{\mathcal{E}(t)}.
\end{align*}
(3) For the estimate of $\Theta$, we recall the computation in $(\ref{viv2})_1$: 
\begin{align*}
\rho\Theta_{ij}&=\int_{\mathbb{R}^3}F(v_i-U_i)(v_j-U_j) dv  = \int_{\mathbb{R}^3}(m+\sqrt{m}f)v_iv_j dv -\rho U_iU_j.
\end{align*}
When $i=j$, we apply the H\"{o}lder inequality and the estimates in (1) and (2) to get
\begin{align*}
|\rho \Theta_{ii}-1|&\leq \left(\int_{\mathbb{R}^3}|f|^2dv\right)^\frac{1}{2}\left(\int_{\mathbb{R}^3}mv_i^4dv\right)^\frac{1}{2}+C(1+C\sqrt{\mathcal{E}(t)})\mathcal{E}(t) \leq C\sqrt{\mathcal{E}(t)},
\end{align*}
which gives
\begin{align*}
| \Theta_{ii}-1|&\leq C\sqrt{\mathcal{E}(t)}.
\end{align*}
When $i \neq j $, since $\int_{\mathbb{R}^3}mv_iv_j dv=0$, we get
\begin{align*}
|\Theta_{ij}|&\leq \frac{\left(\int_{\mathbb{R}^3}|f|^2dv\right)^\frac{1}{2}\left(\int_{\mathbb{R}^3}mv_i^2v_j^2dv\right)^\frac{1}{2}+C(1+C\sqrt{\mathcal{E}(t)})\mathcal{E}(t)}{1-C\sqrt{\mathcal{E}(t)}} \leq C\sqrt{\mathcal{E}(t)}.
\end{align*}
(4) Recall the computation in $(\ref{viv2})_2$ that 
\begin{align*}
q_i	 &= \int_{\mathbb{R}^3}Fv_i|v|^2dv -\sum_{1\leq j \leq 3}2U_j\rho\Theta_{ij}- \rho U_i|U|^2 - U_i \rho  \sum_{1\leq i \leq 3 }\Theta_{ii}.
\end{align*}
Combining this with the above estimates (1)-(3), we have
\begin{align*}
|q_i|	 &\leq C\|f\|_{L^2_v}+C\sqrt{\mathcal{E}(t)} \leq C\sqrt{\mathcal{E}(t)}.
\end{align*}
\end{proof}
\begin{lemma}\label{macro diff} Let $N=|\alpha| \geq 1$. For sufficiently small $\mathcal{E}(t)$, there exist positive constants $C$ and $C_{\alpha}$ such that
\begin{align*}
&(1) \ |\partial^{\alpha}\rho(x,t)| \leq C\|\partial^{\alpha}f\|_{L^2_v},	\cr
&(2) \ |\partial^{\alpha}U(x,t)| \leq C_{\alpha}\sum_{|\alpha_1|\leq|\alpha|}\|\partial^{\alpha_1}f\|_{L^2_v},	\cr
&(3) \ |\partial^{\alpha}\Theta_{ij}(x,t)| \leq C_{\alpha}\sum_{|\alpha_1|\leq|\alpha|}\|\partial^{\alpha_1}f\|_{L^2_v} ,	\cr
&(4) \ |\partial^{\alpha}q_i(x,t)|  \leq C_{\alpha}\sum_{|\alpha_1|\leq|\alpha|}\|\partial^{\alpha_1}f\|_{L^2_v},
\end{align*}
for $1\leq i,j \leq 3$.
\end{lemma}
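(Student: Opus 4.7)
The proof proceeds by induction on $|\alpha|$, treating each of parts (1)--(4) in turn. The inputs are the moment identities from (\ref{macro quantity}) and (\ref{viv2}), the Sobolev embedding $H^2\subset\subset L^\infty$ in $x$, and the smallness of the macroscopic fields established in Lemma \ref{macro}. Throughout, $\mathcal{E}(t)$ is taken sufficiently small that Lemma \ref{macro} applies; in particular $\rho(x,t)\geq 1/2$, so one may divide by $\rho$ freely.

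Part (1) requires no induction: differentiating $\rho-1 = \int_{\mathbb{R}^3}\sqrt{m}\,f\,dv$ and applying Cauchy--Schwarz in $v$ pointwise in $x$ immediately yields $|\partial^\alpha \rho(x,t)|\leq \|\sqrt{m}\|_{L^2_v}\|\partial^\alpha f(x,t)\|_{L^2_v}$. For part (2) I would start from $\rho U = \int v\sqrt{m}\,f\,dv$, apply $\partial^\alpha$, and use Leibniz's rule to isolate
\begin{align*}
\rho\, \partial^\alpha U = \int_{\mathbb{R}^3} v\sqrt{m}\,\partial^\alpha f\,dv - \sum_{\substack{\alpha_1+\alpha_2=\alpha\\|\alpha_1|\geq 1}} \binom{\alpha}{\alpha_1}\, \partial^{\alpha_1}\rho\cdot \partial^{\alpha_2}U.
\end{align*}
The moment term is controlled pointwise by $C\|\partial^\alpha f(x,t)\|_{L^2_v}$. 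For each remaining product, $|\alpha_2|<|\alpha|$, so the inductive hypothesis applies to $\partial^{\alpha_2}U$, while part (1) controls $\partial^{\alpha_1}\rho$. To convert the product of two pointwise $L^2_v$-norms into a single one, I would place the factor carrying the smaller multi-index into $L^\infty_x$ via Sobolev embedding: since $\min(|\alpha_1|,|\alpha_2|)\leq |\alpha|/2\leq N-2$ for $N\geq 3$, that factor is bounded by $C\sqrt{\mathcal{E}(t)}$, which is absorbed into the constant, while the complementary factor retains its pointwise $L^2_v$ bound.

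Parts (3) and (4) follow the same pattern. For (3), I would use $\rho\Theta_{ij} = \int v_iv_j(m+\sqrt{m}f)\,dv - \rho U_iU_j$ from (\ref{viv2}), differentiate, and isolate $\rho\,\partial^\alpha\Theta_{ij}$ as a moment term minus Leibniz products in lower-order derivatives of $\rho$, $U$, and $\Theta$. For (4), the explicit formula $q_i = \int Fv_i|v|^2\,dv - \sum_{j}2U_j\rho\Theta_{ij} - \rho U_i|U|^2 - U_i\rho\sum_{k}\Theta_{kk}$ yields $\partial^\alpha q_i$ directly, without any division by $\rho$. In both cases Leibniz products of up to four macroscopic-field derivatives arise, but upon ordering the multi-indices decreasingly, all but the largest have order at most $|\alpha|/2\leq N-2$, so every such factor except one can be placed in $L^\infty_x$ via Sobolev at the cost of $C\sqrt{\mathcal{E}(t)}$, and the lone remaining factor is bounded pointwise through parts (1)--(3) and the inductive hypothesis.

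The main technical obstacle is simply the bookkeeping of these Leibniz expansions --- particularly for $q_i$, where several macroscopic fields multiply --- together with verifying that at every step the smaller-order factor admits a Sobolev $L^\infty_x$ bound. The hypothesis $N\geq 3$ is exactly what makes $\min(|\alpha_1|,|\alpha_2|)+2\leq N$ hold universally (with equality in the borderline case $N=3$, $|\alpha|=3$), so the induction closes without loss of regularity.
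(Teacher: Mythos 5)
Your argument is correct and essentially the paper's own: differentiate the moment identities, bound the leading moment term by Cauchy--Schwarz in $v$, and control the remaining Leibniz factors with part (1), Lemma \ref{macro}, and the Sobolev embedding $H^2_x\subset\subset L^\infty_x$, placing the lower-order factor in $L^\infty_x$ at the cost of $C\sqrt{\mathcal{E}(t)}$. The only cosmetic differences are that you isolate $\rho\,\partial^\alpha U$ (resp.\ $\rho\,\partial^\alpha\Theta_{ij}$) and induct on $|\alpha|$ where the paper differentiates the quotient and bounds $\partial^{\alpha_2}(1/\rho)$ directly, and your chain $\min(|\alpha_1|,|\alpha_2|)\le |\alpha|/2\le N-2$ should be read with $\lfloor |\alpha|/2\rfloor$, which is exactly the condition $\min(|\alpha_1|,|\alpha_2|)+2\le N$ you state correctly at the end.
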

\begin{proof} (1) Taking $\partial^{\alpha}$ on \eqref{rho} and applying the H\"{o}lder inequality give
\begin{align*}
|\partial^{\alpha}\rho| = \bigg|\int_{\mathbb{R}^3}\sqrt{m}\partial^{\alpha}f dv\bigg| \leq  C\|\partial^{\alpha}f\|_{L^2_v} .
\end{align*}
(2) Similarly, we have
\begin{align*}
|\partial^{\alpha}U|&= \bigg| \partial^{\alpha}\frac{\int_{\mathbb{R}^3}\sqrt{m}fv dv}{\rho }\bigg|\\
&\leq C_{\alpha} \sum_{\alpha_1+\alpha_2=\alpha}\bigg|\int_{\mathbb{R}^3}\sqrt{m}\partial^{\alpha_1}fv dv\bigg|\bigg|\partial^{\alpha_2}\frac{1}{\rho}\bigg|\\
&\leq C_{\alpha}\sum_{\alpha_1+\alpha_2=\alpha}\|\partial^{\alpha_1}f\|_{L^2_v}\bigg|\partial^{\alpha_2}\frac{1}{\rho}\bigg|.
\end{align*}
We then use the boundedness of $\rho$ and $\partial^{\alpha}\rho$ in Lemma \ref{macro}, and the estimate (1) of this lemma, and apply the Sobolev embedding $H^2 \subset\subset L^{\infty}$ to obtain 
\begin{align*}
\bigg|\partial^{\alpha_2}\frac{1}{\rho}\bigg| &\leq \left( \displaystyle{\prod_{\sum|\alpha_{2i}|\leq|\alpha_2|}|\partial^{\alpha_{2i}}\rho|} \right) \left(\sum_{0\leq n \leq  |\alpha|}\bigg|\frac{1}{\rho}\bigg|^{n+1}\right) \cr 
&\leq \left(C\sqrt{\mathcal{E}(t)}\sum_{|\alpha_2|-1 \leq |\alpha_{2i}|\leq |\alpha_2|}\|\partial^{\alpha_{2i}}f\|_{L^2_v}\right)\left(\sum_{0\leq n \leq  |\alpha|}\bigg|\frac{1}{1-C\sqrt{\mathcal{E}(t)}}\bigg|^{n+1}\right)\cr
&\leq C_{\alpha}\sum_{|\alpha_1|\leq|\alpha|}\|\partial^{\alpha_1}f\|_{L^2_v},
\end{align*}
which gives the desired result. \newline
(3) Taking $\partial^{\alpha}$ on $\Theta_{ij}$ gives 
\begin{align*}
|\partial^{\alpha}\Theta_{ij}|&\leq\bigg|\partial^{\alpha}\frac{\int_{\mathbb{R}^3}(m+\sqrt{m}f)v_iv_j dv}{\rho}\bigg| + \partial^{\alpha}(U_iU_j).
\end{align*}
Then by the Sobolev embedding $H^2 \subset\subset L^{\infty}$, 
\begin{align*}
	|\partial^{\alpha}(U_iU_j)|&\leq \sum_{|\alpha_1|+|\alpha_2|=|\alpha|}|\partial^{\alpha_1}U_i||\partial^{\alpha_1}U_j| \leq C_{\alpha}\sqrt{\mathcal{E}(t)} \| \partial^{\alpha}f\|_{L^2_v}.
\end{align*}
For sufficiently small $\mathcal{E}(t)$, the H\"{o}lder inequality gives
\begin{align*}
|\partial^{\alpha}\Theta_{ij}|&\leq C_{\alpha}\sum_{|\alpha_1|\leq|\alpha|}\|\partial^{\alpha_1}f\|_{L^2_v}.
\end{align*}
(4) Similarly, taking $\partial^{\alpha}$ on $q$ gives 
\begin{align*}
\partial^{\alpha}q_i	 &= \int_{\mathbb{R}^3}\partial^{\alpha}Fv_i|v|^2dv -2\sum_{1\leq j \leq 3}\partial^{\alpha}(\rho U_j\Theta_{ij})- \partial^{\alpha}( \rho U_i|U|^2) - \partial^{\alpha}\left(U_i \rho  \sum_{1\leq i \leq 3 }\Theta_{ii}\right).
\end{align*}
The previous results and the H\"{o}lder inequality yields 
\begin{align*}
|\partial^{\alpha}q_i|	 &\leq  \int_{\mathbb{R}^3}v_i|v|^2\sqrt{m}\partial^{\alpha}fdv+C_{\alpha}\mathcal{E}(t)\sum_{|\alpha_1|\leq|\alpha|}\|\partial^{\alpha_1}f\|_{L^2_v} \leq C_{\alpha}\sum_{|\alpha_1|\leq|\alpha|}\|\partial^{\alpha_1}f\|_{L^2_v}.
\end{align*}
\end{proof}
We also estimate the macroscopic fields depending on $\theta$. 
\begin{lemma}\label{macrotheta} For sufficiently small $\mathcal{E}(t)$, there exist positive constants $C$ such that
\begin{align*}
	&(1) \ |\rho_{\theta}(x,t)-1|\leq C\sqrt{\mathcal{E}(t)},	\cr
	&(2) \ |U_{\theta}(x,t)|\leq C\sqrt{\mathcal{E}(t)},	\cr
	&(3) \ |\Theta_{\theta ij}(x,t)-\delta_{ij}| \leq C\sqrt{\mathcal{E}(t)}, \cr
	&(4) \ |q_{_{\theta} i}(x,t)|\leq C\sqrt{\mathcal{E}(t)} ,
\end{align*}
for $1\leq i,j \leq 3$.
\end{lemma}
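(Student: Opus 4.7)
The plan is to deduce everything from Lemma \ref{macro} by exploiting the simple affine structure of the transitional fields in \eqref{transmacro}. Concretely, I would invert the change of variables $(\rho,U,\Theta,q)\leftrightarrow(\rho,\rho U,G,H)$ at the transitional level to write $U_{\theta},\Theta_{\theta},q_{\theta}$ as explicit rational functions of $\rho_{\theta}$, $\rho_{\theta}U_{\theta}=\theta \rho U$, $G_{\theta}=\theta G$, $H_{\theta}=\theta H$, and then apply the estimates already proved for $\rho,U,\Theta,q,G,H$ uniformly in $\theta\in[0,1]$.

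I would proceed in the order of the four statements. For (1), I note that $\rho_{\theta}-1=\theta(\rho-1)$ by definition, so Lemma \ref{macro}(1) immediately yields $|\rho_{\theta}-1|\le C\sqrt{\mathcal{E}(t)}$ with a constant independent of $\theta$. This step also gives $\rho_{\theta}\ge 1-C\sqrt{\mathcal{E}(t)}\ge 1/2$ when $\mathcal{E}(t)$ is taken sufficiently small, which furnishes the uniform lower bound of the denominator needed in all subsequent steps. For (2), I use $U_{\theta}=\theta\rho U/\rho_{\theta}$ together with this lower bound and Lemma \ref{macro}(1),(2) to obtain $|U_{\theta}|\le C|\rho|\,|U|\le C\sqrt{\mathcal{E}(t)}$.

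For (3) and (4), I would solve the definitions \eqref{G}, \eqref{H} of $G_{ij}$ and $H_i$ for $\Theta_{ij}$ and $q_i$ in terms of $(\rho,\rho U,G,H)$, and then read off the same formulas with every quantity replaced by its $\theta$-version; e.g., $\Theta_{\theta ii}=(2 G_{\theta ii}+\rho_{\theta}-\rho_{\theta}U_{\theta i}^{2})/\rho_{\theta}$, $\Theta_{\theta ij}=G_{\theta ij}/\rho_{\theta}-U_{\theta i}U_{\theta j}$ for $i\neq j$, and an analogous expression for $q_{\theta i}$ coming from \eqref{H}. Since $G_{\theta}=\theta G$ and $H_{\theta}=\theta H$, and Lemma \ref{macro}(1)--(4) together with the definition of $G,H$ force $|G|+|H|\le C\sqrt{\mathcal{E}(t)}$, every term on the right-hand sides is either small by (1)--(2) of this lemma, or small by Lemma \ref{macro}, so that $|\Theta_{\theta ii}-1|,\,|\Theta_{\theta ij}|,\,|q_{\theta i}|\le C\sqrt{\mathcal{E}(t)}$ with constants uniform in $\theta$.

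There is no substantive obstacle here: the argument is purely algebraic once the inversion of the map $(\rho,U,\Theta,q)\mapsto(\rho,\rho U,G,H)$ is written down (this inversion is already implicit in Lemma \ref{Jaco}(2)), and the only care required is to keep the denominator $\rho_{\theta}$ bounded below uniformly in $\theta\in[0,1]$, which is secured in step (1). The proof would therefore be quite short, consisting of four lines of estimates, one for each displayed inequality.
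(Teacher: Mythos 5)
Your proposal is correct and follows essentially the same route as the paper: both exploit $\rho_\theta-1=\theta(\rho-1)$, $\rho_\theta U_\theta=\theta\rho U$, $G_\theta=\theta G$, $H_\theta=\theta H$, the uniform lower bound on $\rho_\theta$, and then solve the defining relations of $G$ and $H$ for $\Theta_\theta$ and $q_\theta$ before invoking Lemma \ref{macro}. The only cosmetic difference is that you phrase step (3)--(4) via the smallness of $G,H$ themselves, while the paper estimates the equivalent combinations $\rho\Theta_{ij}+\rho U_iU_j-\rho\delta_{ij}$ and the bracket in \eqref{H} directly; the estimates are identical in substance.
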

\begin{proof}
(1) Since $0\leq\theta \leq 1$, we have from Lemma \ref{macro} that 
\begin{align*}
|\rho_{\theta}-1|=\theta|\rho-1| \leq  C\sqrt{\mathcal{E}(t)}.
\end{align*}
(2) Applying the estimate (1) above and Lemma \ref{macro} gives
\begin{align*}
|U_{\theta}| = \bigg|\frac{\theta \rho U}{\rho_{\theta}} \bigg| \leq C\sqrt{\mathcal{E}(t)}.
\end{align*}
(3) From the definition of $G$ in \eqref{G}, we see that
\begin{align*}
\left(\rho_{\theta} \Theta_{\theta ii}+\rho_{\theta} U_{\theta i}^2-\rho_{\theta}\right) = \theta\left(\rho \Theta_{ii}+\rho U_i^2-\rho\right),
\end{align*}
for $i=j$ case. Applying Lemma \ref{macro} and estimates of (1) and (2) of this lemma, we have
\begin{align}\label{Thetaii}
|\Theta_{\theta ii}-1| = \bigg| \theta\frac{\rho \Theta_{ii}+\rho U_i^2-\rho}{\rho_{\theta}} -  U_{\theta i}^2 \bigg| \leq C\sqrt{\mathcal{E}(t)}.
\end{align}
When $i\neq j$, the definition $G_{\theta}=\theta G$ implies 
\begin{align*}
	\rho_{\theta}\Theta_{\theta ij}+\rho_{\theta} U_{\theta i}U_{\theta j}=\theta (\rho\Theta_{ij}+\rho U_iU_j).
\end{align*}
Therefore,
\begin{align}\label{Thetaij}
|\Theta_{\theta ij}|=\bigg|\theta \frac{\rho\Theta_{ij}+\rho U_iU_j}{\rho_{\theta}} -U_{\theta i}U_{\theta j}\bigg| \leq C\sqrt{\mathcal{E}(t)}.
\end{align}
(4) The definition of $H_{\theta}$ in \eqref{transmacro} with the definition of $H$ in \eqref{H} implies
\begin{multline}\label{qtheta}
\begin{split}
q_{\theta i} +\sum_{1\leq j \leq 3}2\rho_{\theta} U_{\theta j}\Theta_{\theta ij}+ \rho_{\theta} U_{\theta i}|U_{\theta}|^2 + \rho_{\theta} U_{\theta i} (\Theta_{\theta 11}+\Theta_{\theta 22}+\Theta_{\theta 33}) -5\rho_{\theta} U_{\theta i} \cr
= \theta  \left(q_i +\sum_{1\leq j \leq 3}2\rho U_j\Theta_{ij}+ \rho U_i|U|^2 + \rho U_i (\Theta_{11}+\Theta_{22}+\Theta_{33}) -5\rho U_i\right).
\end{split}
\end{multline}
We then apply (1)-(3) of this lemma and Lemma \ref{macro} to obtain the desired result.
\end{proof}

\begin{lemma}\label{macrotheta diff} Let $|\alpha| \geq 1$. For a sufficiently small $\mathcal{E}(t)$, there exist positive constants $C$ and $C_{\alpha}$ such that
\begin{align*}
&(1) \ |\partial^{\alpha}\rho_{\theta}(x,t)| \leq C\|\partial^{\alpha}f\|_{L^2_v},	\cr
&(2) \ |\partial^{\alpha}U_{\theta}(x,t)| \leq C_{\alpha}\sum_{|\alpha_1|\leq|\alpha|}\|\partial^{\alpha_1}f\|_{L^2_v},	\cr
&(3) \ |\partial^{\alpha}\Theta_{\theta ij}(x,t)| \leq C_{\alpha}\sum_{|\alpha_1|\leq|\alpha|}\|\partial^{\alpha_1}f\|_{L^2_v} ,	\cr
&(4) \ |\partial^{\alpha}q_{\theta i}(x,t)|  \leq C_{\alpha}\sum_{|\alpha_1|\leq|\alpha|}\|\partial^{\alpha_1}f\|_{L^2_v},
\end{align*}
for $1\leq i,j \leq 3$.
\end{lemma}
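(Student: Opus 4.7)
The plan is to mirror the proof of Lemma \ref{macro diff}, working from the explicit expressions for the transitional macroscopic fields in \eqref{transmacro} and using Lemma \ref{macrotheta} to control the undifferentiated factors. The only genuinely new feature is that we must handle the quotient $1/\rho_\theta$ and the auxiliary identities rewriting $\Theta_\theta$ and $q_\theta$ in terms of $\rho_\theta, U_\theta$ and the $\theta=1$ macroscopic quantities; both have already been carried out in the proof of Lemma \ref{macrotheta}, so we may re-use them directly.

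\emph{Estimate for $\rho_\theta$.} Since $\rho_\theta = \theta \rho + (1-\theta)$ and $|\alpha|\geq 1$, the constant term is annihilated by $\partial^\alpha$, so $\partial^\alpha \rho_\theta = \theta \, \partial^\alpha \rho$. The bound follows immediately from Lemma \ref{macro diff} (1) and $0\leq \theta\leq 1$.

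\emph{Estimate for $U_\theta$.} From $\rho_\theta U_\theta = \theta \rho U$ we get $U_\theta = \theta \rho U/\rho_\theta$. Lemma \ref{macrotheta} (1) gives $\rho_\theta \geq 1/2$ for $\mathcal{E}(t)$ small, so $1/\rho_\theta$ and all its derivatives are controlled exactly as in the proof of Lemma \ref{macro diff} (2), via the chain rule expansion
\begin{equation*}
\bigl|\partial^{\alpha_2}(1/\rho_\theta)\bigr| \leq C_\alpha \sum_{|\alpha_1|\leq|\alpha_2|} \|\partial^{\alpha_1} f\|_{L^2_v}.
\end{equation*}
Applying the Leibniz rule to $\theta \rho U/\rho_\theta$ together with Lemma \ref{macro diff} (2), Lemma \ref{macro} (2) and Lemma \ref{macrotheta} (2) produces the claimed bound.

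\emph{Estimate for $\Theta_{\theta ij}$.} We solve the identities \eqref{Thetaii}--\eqref{Thetaij} used in the proof of Lemma \ref{macrotheta} (3):
\begin{equation*}
\Theta_{\theta ii} = 1 + \theta\,\frac{\rho\Theta_{ii} + \rho U_i^2 - \rho}{\rho_\theta} - U_{\theta i}^2,\qquad
\Theta_{\theta ij} = \theta\,\frac{\rho\Theta_{ij} + \rho U_i U_j}{\rho_\theta} - U_{\theta i} U_{\theta j}\quad (i\neq j).
\end{equation*}
Differentiating $\partial^\alpha$ through these expressions via the Leibniz rule, each resulting term is a product of $\partial^{\alpha_k}$-derivatives of factors drawn from $\{\rho,U,\Theta_{ij},\rho_\theta^{-1},U_\theta\}$. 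Lemma \ref{macro diff}, Lemma \ref{macrotheta}, and the quotient estimate from the previous step give the desired control.

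\emph{Estimate for $q_{\theta i}$.} Solving \eqref{qtheta} for $q_{\theta i}$ yields
\begin{align*}
q_{\theta i} &= \theta\Bigl(q_i + \sum_{j}2\rho U_j \Theta_{ij} + \rho U_i|U|^2 + \rho U_i(\Theta_{11}+\Theta_{22}+\Theta_{33}) - 5\rho U_i\Bigr)\\
&\quad - \sum_{j}2\rho_\theta U_{\theta j}\Theta_{\theta ij} - \rho_\theta U_{\theta i}|U_\theta|^2 - \rho_\theta U_{\theta i}(\Theta_{\theta 11}+\Theta_{\theta 22}+\Theta_{\theta 33}) + 5\rho_\theta U_{\theta i}.
\end{align*}
Applying $\partial^\alpha$ and expanding via the Leibniz rule, each resulting summand is a product of at most $|\alpha|+1$ derivatives of factors already estimated in Lemma \ref{macro diff} and the previous steps of the present lemma. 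Collecting yields the stated bound.

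\emph{Main obstacle.} There is no conceptual difficulty; the proof is essentially a careful bookkeeping exercise. The only point requiring mild care is ensuring that, when the product rule distributes all $|\alpha|$ derivatives, at most one factor carries a derivative of order $|\alpha|$ while every other factor is either $L^\infty$-bounded by a constant (via Lemma \ref{macrotheta}) or by a controllable quantity through the Sobolev embedding $H^2 \subset\subset L^\infty$, as was done in Lemma \ref{macro diff}. For this reason the proof, being straightforward and tedious, may be omitted in the final write-up.
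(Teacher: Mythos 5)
Your proposal is correct and follows essentially the same route as the paper: reduce each transitional field to the explicit formulas $\rho_\theta=\theta\rho+(1-\theta)$, $U_\theta=\theta\rho U/\rho_\theta$, the identities \eqref{Thetaii}--\eqref{Thetaij} (equivalently the combined formula used in the paper) and \eqref{qtheta}, then apply the Leibniz rule together with Lemma \ref{macro}, Lemma \ref{macro diff}, Lemma \ref{macrotheta} and the quotient estimate for $1/\rho_\theta$ exactly as in Lemma \ref{macro diff} (2). No gaps; the bookkeeping is the same as in the paper's own (terse) proof.
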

\begin{proof}
(1) The definition of $\rho_{\theta}$ and Lemma \ref{macro diff} yield
\begin{align*}
|\partial^{\alpha}\rho_{\theta}| = \theta|\partial^{\alpha}\rho| \leq C\|\partial^{\alpha}f\|_{L^2_v}.
\end{align*}
(2) Using the definition of $U_{\theta}$, we can write 
\begin{align*}
|\partial^{\alpha}U_{\theta}| =  \theta\bigg|\partial^{\alpha}\frac{ \rho U}{\rho_{\theta}} \bigg|.
\end{align*}
Since $\rho U \leq C\sqrt{\mathcal{E}(t)} $, by exactly the same argument in Lemma \ref{macro diff} (2), we obtain  
\begin{align*}
|\partial^{\alpha}U_{\theta}| \leq C_{\alpha}\sum_{|\alpha_1|\leq|\alpha|}\|\partial^{\alpha_1}f\|_{L^2_v}.
\end{align*}
(3) For a convenience of notation $\Theta_{\theta}$, we combine the previous computation \eqref{Thetaii} and \eqref{Thetaij} as follows: 
\begin{align*}
\Theta_{\theta ij}=\theta \frac{\rho\Theta_{ij}+\rho U_iU_j-\rho\delta_{ij}}{\rho_{\theta}} -U_{\theta i}U_{\theta j}+\delta_{ij}.
\end{align*}
Note that the numerator part can be estimated by Lemma \ref{macro} and Lemma \ref{macro diff} as
\begin{align*}
|\partial^{\alpha}\left( \rho\Theta_{ij}+\rho U_iU_j-\rho\delta_{ij}\right)| \leq C_{\alpha}\mathcal{E}(t)\sum_{|\alpha_1|\leq|\alpha|}\|\partial^{\alpha_1}f\|_{L^2_v}.
\end{align*}
Then from the same way as in Lemma \ref{macro diff} (2), we have 
\begin{align*}
|\partial^{\alpha}\Theta_{\theta ij}| \leq C_{\alpha}\sum_{|\alpha_1|\leq|\alpha|}\|\partial^{\alpha_1}f\|_{L^2_v}.
\end{align*}
(4) Recall that the form of $q_{\theta}$ in \eqref{qtheta}. We already obtained the estimates for all other terms $\rho$, $U$, $\Theta$. Therefore taking $\partial^{\alpha}$ on \eqref{qtheta} gives 
\begin{align*}
|\partial^{\alpha}q_{\theta i}|  \leq C_{\alpha}\sum_{|\alpha_1|\leq|\alpha|}\|\partial^{\alpha_1}f\|_{L^2_v}.
\end{align*}
\end{proof}
\subsection{Estimate for the nonlinear term}
We now estimate the nonlinear perturbations.
\begin{proposition}\label{prop} Let $\mathcal{E}(t)$ be sufficiently small. Then we have 
\begin{align*}
\bigg| \int_{\mathbb{R}^3}\partial^{\alpha}_{\beta} \Gamma(f) g dv \bigg|	 &\leq  C\sqrt{\mathcal{E}(t)}\sum_{|\alpha_1|+|\alpha_2|+|\alpha_3|\leq |\alpha|} \|\partial^{\alpha_1}f\|_{L^2_{v}}\|\partial^{\alpha_2}f\|_{L^2_{v}}\|\partial^{\alpha_3}f\|_{L^2_v}\|g\|_{L^2_{v}}.
\end{align*}
\end{proposition}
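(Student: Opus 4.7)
The plan is to decompose $\Gamma(f) = \Gamma_1(f) + \Gamma_2(f) + \Gamma_3(f)$ as in \eqref{non linear} and estimate each piece separately. For every $\Gamma_i$ I would apply the Leibniz rule to distribute $\partial^{\alpha}_{\beta}$ among the constituent factors: the collision-frequency remainder $(1/\tau - 1/\tau_0)$, the coefficient tensor $D^2_{(\rho_\theta,\rho_\theta U_\theta,G_\theta,H_\theta)}\mathcal{S}_{Pr}(\theta)$, the linear piece $L_{Pr}f$, and the moment inner products $\langle f,e_i\rangle_{L^2_v}$. The prototype is $\Gamma_2$, where Lemma \ref{non-linear poly} rewrites the coefficient as $\mathcal{P}_{ij}(\rho_\theta,U_\theta,\Theta_\theta,q_\theta,v-U_\theta,1-Pr)/(\rho_\theta^{m}T_\theta^{n})\cdot \mathcal{M}(\theta)$. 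The key observation is that, for $\mathcal{E}(t)$ small, Lemma \ref{macrotheta} keeps $\rho_\theta,T_\theta$ close to $1$ and $U_\theta$ close to $0$ uniformly in $\theta\in[0,1]$, so $\mathcal{M}(\theta)/\sqrt{m}$ is pointwise dominated by a fixed Gaussian with genuine exponential decay.

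Each $\partial_\beta$ that lands on $\mathcal{M}(\theta)/\sqrt{m}$ produces polynomial factors in $(v-U_\theta)/T_\theta$ still enjoying Gaussian decay, so the product remains integrable against $g$ in $L^2_v$. Each $\partial^{\alpha}$ that falls on the macroscopic fields (appearing in $\mathcal{P}$, in the denominators $\rho_\theta^{m}T_\theta^{n}$, or in the exponent of $\mathcal{M}(\theta)$) is controlled, via Lemma \ref{macrotheta diff}, by $C\sum_{|\alpha'|\le |\alpha|}\|\partial^{\alpha'}f\|_{L^2_v}$; each $\partial^{\alpha}$ that falls on a moment $\langle f,e_i\rangle_{L^2_v}$ directly gives $\|\partial^{\alpha}f\|_{L^2_v}$ because $e_i$ is Maxwellian-decaying. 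H\"older's inequality in $v$ then yields the factor $\|g\|_{L^2_v}$, and the accumulated $f$-factors distribute exactly into at most three $\|\partial^{\alpha_i}f\|_{L^2_v}$ with $|\alpha_1|+|\alpha_2|+|\alpha_3|\le |\alpha|$.

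The $\sqrt{\mathcal{E}(t)}$ gain is extracted from one of two sources. When no $x$-derivative has landed on the coefficient, one of the two moments $\langle f,e_i\rangle_{L^2_v}$ is pulled out in $L^\infty_x$ via Sobolev embedding $H^2(\mathbb{T}^3)\hookrightarrow L^\infty(\mathbb{T}^3)$, giving $|\langle f,e_i\rangle_{L^2_v}|\le C\|f\|_{L^\infty_xL^2_v}\le C\sqrt{\mathcal{E}(t)}$; when derivatives do hit the coefficient, the factors of $\rho_\theta-1,\,U_\theta,\,\Theta_\theta-I,\,q_\theta$ surviving inside $\mathcal{P}$ provide the smallness by Lemma \ref{macrotheta}. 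For $\Gamma_1=(1/\tau-1/\tau_0)L_{Pr}f$, the collision-frequency linearization expresses $1/\tau-1/\tau_0$ as a linear functional of $\int f\sqrt{m}\,dv$ and $\int f(|v|^2-3)/\sqrt{6}\sqrt{m}\,dv$, each $O(\sqrt{\mathcal{E}(t)})$ pointwise by Lemma \ref{macro}, while their derivatives are controlled by Lemma \ref{macro diff}; combined with $L_{Pr}f=P_{Pr}f-f$, the claimed structure follows at once. For $\Gamma_3$, a direct combination of the $\Gamma_1$ and $\Gamma_2$ arguments suffices, with one extra $\sqrt{\mathcal{E}(t)}$ supplied by the $(1/\tau-1/\tau_0)$ prefactor.

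The main obstacle is the detailed bookkeeping in the $\Gamma_2$ estimate: for each $(i,j)$ the tensor $D^2_{(\rho_\theta,\rho_\theta U_\theta,G_\theta,H_\theta)}\mathcal{S}_{Pr}(\theta)$ is an intricate rational--polynomial expression whose derivatives, via Leibniz applied simultaneously to $\mathcal{P}_{ij}/(\rho_\theta^{m}T_\theta^{n})$ and to the exponent of $\mathcal{M}(\theta)$, spawn many structurally different terms. One must verify systematically that each such term admits a bound of the stated form, with at most three $\|\partial^{\alpha_i}f\|_{L^2_v}$ factors and a $\sqrt{\mathcal{E}(t)}$ gain that is consistently extractable either from a surviving smallness factor inside $\mathcal{P}$ or from a Sobolev-bounded moment. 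The uniform Gaussian domination of $\mathcal{M}(\theta)/\sqrt{m}$ across $\theta\in[0,1]$, ensured by the smallness of $\mathcal{E}(t)$, is the single technical input that makes the polynomial growth from $\partial_\beta$ derivatives harmless.
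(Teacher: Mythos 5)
Your proposal follows essentially the same route as the paper's proof: the decomposition $\Gamma=\Gamma_1+\Gamma_2+\Gamma_3$ with $\Gamma_2$ as the prototype, Leibniz distribution of $\partial^{\alpha}_{\beta}$ onto the coefficient tensor (via the rational--polynomial structure of Lemma \ref{non-linear poly}) and the moments $\langle f,e_i\rangle_{L^2_v}$, control of the macroscopic fields through Lemmas \ref{macrotheta} and \ref{macrotheta diff}, uniform Gaussian domination of $\mathcal{M}(\theta)/\sqrt{m}$ for small $\mathcal{E}(t)$, and H\"older in $v$ to produce $\|g\|_{L^2_v}$. This matches the paper's argument in both structure and the key technical inputs, so the proposal is correct as written.
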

\begin{proof} Since the other terms are similar, we only consider $\Gamma_2$. We apply $\partial^{\alpha}_{\beta}$ to the non-linear term $\Gamma_2$ in \eqref{non linear}:
\begin{multline*}
\partial^{\alpha}_{\beta}\Gamma_2(f) =\sum_{\substack{1 \leq i,j \leq 13\cr \alpha_1+\alpha_2+\alpha_3=\alpha}} \frac{1}{\tau_0}\int_0^1 \partial^{\alpha_1}_{\beta}\left(\frac{1}{\sqrt{m}}\left\{D^2_{(\rho_{\theta},\rho_{\theta} U_{\theta},G_{\theta},H_{\theta})}\mathcal{S}_{Pr}(\theta)\right\}_{ij}\right)(1-\theta)d\theta\cr
\times  \langle \partial^{\alpha_2}f, e_i \rangle_{L^2_v}\langle \partial^{\alpha_3}f, e_j \rangle_{L^2_v}.
\end{multline*}
Since we have from Lemma \ref{non-linear poly} that
\begin{align*}
	\{D^2_{(\rho_{\theta},\rho_{\theta} U_{\theta},G_{\theta},H_{\theta})}\mathcal{S}_{Pr}(\theta)\}_{ij} =\frac{\mathcal{P}_{ij}(\rho_{\theta},U_{\theta},\Theta_{\theta},q_{\theta},(v_i-U_{\theta i}),1-Pr)}{\rho_{\theta}^{m}T_{\theta}^{n}}\mathcal{M}(\theta).
\end{align*}
We consider  
\begin{align*}
\partial^{\alpha}_{\beta}\left(\frac{\mathcal{P}_{ij}(\rho_{\theta},U_{\theta},\Theta_{\theta},q_{\theta},(v_i-U_{\theta i}),1-Pr)}{\rho_{\theta}^{m}T_{\theta}^{n}}\mathcal{M}(\theta)/\sqrt{m}\right).
\end{align*}
Using the estimates of macroscopic fields in Lemma \ref{macrotheta} and Lemma \ref{macrotheta diff}, we have 
\begin{align*}
\partial^{\alpha}_{\beta}\left(\frac{\mathcal{P}_{ij}(\rho_{\theta},U_{\theta},\Theta_{\theta},q_{\theta},(v_i-U_{\theta i}),1-Pr)}{\rho_{\theta}^{m}T_{\theta}^{n}}\right) \leq C_{\alpha}\sum_{|\alpha_1|\leq|\alpha|}\|\partial^{\alpha_1}f\|_{L^2_v}\mathcal{P}(v_i),
\end{align*}
for some generically defined polynomial $\mathcal{P}$. The remaining exponential part can be estimated similarly:
\begin{align*}
\partial^{\alpha}_{\beta}\left(\mathcal{M}(\theta)/\sqrt{m}\right) &\leq C_{\alpha}\sum_{|\alpha_1|\leq|\alpha|}\|\partial^{\alpha_1}f\|_{L^2_v}\mathcal{P}(v_i) \exp\left(-\frac{|v-U_{\theta}|^2}{2T_{\theta}}+\frac{|v|^2}{4}\right).
\end{align*}
Combining these computations with the H\"{o}lder inequality yields 
\begin{multline*}
\bigg| \int_{\mathbb{R}^3}\partial^{\alpha}_{\beta} \Gamma_2(f) g dv \bigg| \leq  C\sum_{|\alpha_1|+|\alpha_2|+|\alpha_3|\leq |\alpha|} \|\partial^{\alpha_1}f\|_{L^2_{v}}\|\partial^{\alpha_2}f\|_{L^2_{v}}\|\partial^{\alpha_3}f\|_{L^2_v}\|g\|_{L^2_{v}} \cr
\times  \left(\int_{\mathbb{R}^3}\mathcal{P}(v_i)\exp\left(-\frac{|v-U_{\theta}|^2}{2T_{\theta}}+\frac{|v|^2}{2}\right)dv\right)^{\frac{1}{2}} .
\end{multline*}
Then, since $U_{\theta}<1$ and $T_{\theta}\leq 3/2$ for sufficiently small $\mathcal{E}(t)$, we have  
\begin{align*}
\exp\left(-\frac{|v-U_{\theta}|^2}{2T_{\theta}}+\frac{|v|^2}{2}\right) \leq \exp\left(-\frac{|v-4U_{\theta}|^2}{6}+2|U_{\theta}|^2\right)\leq C\exp\left(-\frac{|v|^2}{6}\right).
\end{align*}
This completes the proof.
\end{proof}
\subsection{Local solution}
We are now ready to construct the local smooth solution.
\begin{theorem}\label{local} Let $N\geq 3$ and $F_0(x,v)\geq 0 $. Then there exists $M_0$ and $T_*\geq0 $ such that if $\mathcal{E}(f_0) \leq M_0/2$, then \eqref{perturb} has the unique local-in-time classical solution that exists for $0\leq t<T_*$ satisfying 
\begin{enumerate}
\item The energy of perturbation is continuous and satisfies
\begin{align*}
	\sup_{0 \leq t \leq T_*}\mathcal{E}(f)(t) \leq M_0.
\end{align*}
\item The distribution function is non-negative: 
\begin{align*}
	F(x,v,t)=m+\sqrt{m}f(x,v,t) \geq 0.
\end{align*}
\item The Shakhov operator is non-negative: 
\[\mathcal{S}_{Pr}(F)\geq 0. \]
\item The perturbation $f$ satisfies the conservation laws \eqref{conservf} for $ 0\leq t \leq T_*$.

\end{enumerate}
\end{theorem}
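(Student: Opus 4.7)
My approach is a standard Picard-type iteration scheme adapted to the relaxation structure, combined with the macroscopic and nonlinear estimates already established in Lemmas \ref{macro}--\ref{macrotheta diff} and Proposition \ref{prop}. Set $F^0 \equiv m$, and given $F^n = m+\sqrt{m}f^n$ with $F^n \geq 0$, $\mathcal{S}_{Pr}(F^n)\geq 0$ and $\mathcal{E}(f^n)(t) \leq M_0$ on $[0,T_*]$, I define $F^{n+1}$ as the unique solution of the linear damped transport equation
\begin{align*}
\partial_t F^{n+1} + v\cdot\nabla_x F^{n+1} + \tfrac{1}{\tau^n} F^{n+1} = \tfrac{1}{\tau^n}\mathcal{S}_{Pr}(F^n), \qquad F^{n+1}(0) = F_0,
\end{align*}
where $\tau^n$ is the relaxation time \eqref{tau} evaluated at $(\rho^n, T^n)$. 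The method of characteristics yields the explicit formula
\begin{align*}
F^{n+1}(x,v,t) = e^{-A^n(0,t)}F_0(x-vt,v) + \int_0^t e^{-A^n(s,t)}\tfrac{1}{\tau^n}\mathcal{S}_{Pr}(F^n)(x-v(t-s),v,s)\,ds,
\end{align*}
with $A^n(s,t)=\int_s^t(\tau^n)^{-1}\,d\sigma$, so $F^{n+1}\geq 0$ as soon as $F_0 \geq 0$ and $\mathcal{S}_{Pr}(F^n)\geq 0$.

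For the uniform energy bound, I rewrite the scheme in perturbation form for $f^{n+1} = (F^{n+1}-m)/\sqrt{m}$, apply $\partial^\alpha_\beta$ for $|\alpha|+|\beta|\leq N$, take the $L^2_{x,v}$ inner product with $\partial^\alpha_\beta f^{n+1}$, and absorb the damping term using the lower bound on $\tau^n$ provided by Lemma \ref{macrotheta}. The linear contribution from $P_{Pr}f^n$ is estimated via the boundedness of $P_{Pr}$ (no coercivity is needed at this stage), and the nonlinear contribution $\Gamma(f^n)$ is controlled through Proposition \ref{prop} by $C\sqrt{\mathcal{E}(f^n)}\,\mathcal{E}(f^n)$. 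Summing over $|\alpha|+|\beta|\leq N$ and incorporating the time-integrated dissipation of \eqref{energy}, Grönwall yields
\begin{align*}
\mathcal{E}(f^{n+1})(t) \leq \mathcal{E}(f_0) + CT_*\bigl(M_0 + M_0^{3/2}\bigr),
\end{align*}
which closes the bootstrap $\mathcal{E}(f^{n+1})\leq M_0$ as soon as $T_*$ is chosen small in terms of $M_0$.

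Cauchy convergence in $C([0,T_*];L^2_{x,v})$ follows from a base-level $L^2_{x,v}$ energy estimate on the differences $g^n = f^{n+1}-f^n$, whose source is quadratic in $f^n,\,f^{n-1}$ and can again be handled through Proposition \ref{prop}; after possibly shrinking $T_*$ this produces a contraction $\sup_{[0,T_*]}\|g^n\|_{L^2_{x,v}} \leq \tfrac{1}{2}\sup_{[0,T_*]}\|g^{n-1}\|_{L^2_{x,v}}$. The limit $f$ inherits the uniform $H^N$ bound by weak lower semicontinuity, and classical regularity follows from Sobolev embedding since $N\geq 3$. The conservation laws \eqref{conservf} are preserved by testing the limiting equation against $\sqrt{m}$, $v\sqrt{m}$, $|v|^2\sqrt{m}$ and invoking the Shakhov identities behind \eqref{conserv}.

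The most delicate step is the inductive preservation of $\mathcal{S}_{Pr}(F^n)\geq 0$: pointwise non-negativity of the bracket
\begin{align*}
1 + \tfrac{1-Pr}{5}\tfrac{q^n\cdot(v-U^n)}{\rho^n(T^n)^2}\left(\tfrac{|v-U^n|^2}{2T^n}-\tfrac{5}{2}\right)
\end{align*}
in \eqref{Shakhov operator} is subtle because the heat-flux correction contains a polynomial in $v$ that is unbounded. The mechanism is the smallness bound $|q^n|/(\rho^n(T^n)^2)\leq C\sqrt{M_0}$ provided by Lemma \ref{macro}: combined with the Gaussian envelope of $\mathcal{M}(F^n)$ and a suitably small choice of $M_0$, the correction remains dominated by $1$ on the range of $v$ relevant for the iteration, preserving $\mathcal{S}_{Pr}(F^n)\geq 0$. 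This is the main technical obstacle; the remaining bookkeeping for uniform bounds, the contraction, and the passage to the limit is essentially routine once Lemmas \ref{macro}--\ref{macrotheta diff} and Proposition \ref{prop} are in hand.
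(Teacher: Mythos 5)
Your plan coincides with the paper's proof in all structural respects: the same linear iteration scheme (the paper initializes with $F^0=F_0$ rather than $m$, which is immaterial), non-negativity of $F^{n+1}$ from the Duhamel/characteristics formula once $\mathcal{S}_{Pr}(F^n)\geq 0$ is known, the uniform bound on $\mathcal{E}(f^{n+1})$ obtained by applying $\partial^{\alpha}_{\beta}$, testing with $\partial^{\alpha}_{\beta}f^{n+1}$, using only boundedness of $P_{Pr}$ plus Proposition \ref{prop}, and choosing $T_*$ and $M_0$ small; the contraction and passage to the limit you sketch are exactly the ``standard argument'' the paper cites and omits.

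The genuine gap is the step you yourself single out. Saying that the bracket $1+\frac{1-Pr}{5}\frac{q^n\cdot(v-U^n)}{\rho^n(T^n)^2}\left(\frac{|v-U^n|^2}{2T^n}-\frac{5}{2}\right)$ ``remains dominated by $1$ on the range of $v$ relevant for the iteration'' is not an argument: item (3) is a pointwise claim for every $v\in\mathbb{R}^3$, there is no restricted range of $v$, and since the correction is a cubic polynomial in $v-U^n$ it drops below $-1$ at large $|v|$ whenever $q^n\neq 0$ and $Pr\neq 1$, no matter how small $M_0$ is. Hence smallness of $|q^n|/(\rho^n(T^n)^2)$ alone can never make the bracket itself non-negative, and your write-up never produces a pointwise lower bound for $\mathcal{S}_{Pr}(F^n)$. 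The paper's argument at this point is different in substance: it does not try to control the bracket alone, but estimates the heat-flux correction \emph{together with} the Gaussian factor of the local Maxwellian, absorbing the cubic growth into the exponential via $\left|x^m\exp\left(-\frac{x^2}{2T}\right)\right|\leq CT^{m/2}$, which is recorded as the quantitative pointwise bound
\begin{align*}
\mathcal{S}_{Pr}(F^n)\ \geq\ \mathcal{M}(F^n)\left(1-C\frac{1-Pr}{5}\frac{|q_{n}|}{\rho_{n}T_{n}^{3/2}}\right);
\end{align*}
Lemma \ref{macro} and the induction hypothesis $\mathcal{E}(f^n)\leq M_0$ then give $\mathcal{S}_{Pr}(F^n)\geq \mathcal{M}(F^n)\left(1-CM_0\right)\geq 0$ for $M_0$ small, and this is precisely what feeds the mild formulation to propagate $F^{n+1}\geq 0$. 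To complete your proof you must replace the ``relevant range of $v$'' remark by such an explicit estimate in which the polynomial is paired with the Gaussian before any bound is taken; this inequality is the only non-routine ingredient of the local theory, and everything else in your outline matches the paper.
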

\begin{proof}
We define $F^{n+1}$ iteratively by the following scheme:
\begin{align}\label{Iter}
\begin{split}
\partial_tF^{n+1}+v\cdot \nabla_xF^{n+1} &= \frac{1}{\tau(F^n)}(\mathcal{S}_{Pr}(F^n)-F^{n+1}), \cr
F^{n+1}(x,v,0) &= F_0(x,v),
\end{split}
\end{align}
with $F^0(x,v,t)=F_0(x,v)$.  We use induction argument. We assume the statement (1) - (4) for $n$-th step.  
We first observe that
\begin{align*}
	\mathcal{S}_{Pr}(F^n) &= \frac{\rho_n}{\sqrt{2\pi T_n}^3}  \left[ 1+\frac{1-Pr}{5}\frac{q_n\cdot v}{\rho_n T_n^2} \left(\frac{|v|^2}{2T_n}-\frac{5}{2}\right)\exp\left(-\frac{|v|^2}{2T_n}\right)\right]\cr
	&\geq \mathcal{M}(F^n)\left(1-C\frac{1-Pr}{5}\frac{|q_{n}|}{\rho_{n} T_{n}^{3/2}}\right),
\end{align*}
where we used
\begin{align*}
	\bigg|x^m\exp\left(-\frac{x^2}{2T}\right)\bigg| \leq C T^{m/2}.
\end{align*}
Applying Lemma \ref{macro} and the induction hypothesis yields  
\begin{align*}
	\mathcal{S}_{Pr}(F^n)\geq \mathcal{M}(F^n)\left(1-C\sqrt{\mathcal{E}(f^n)}\right)\geq \mathcal{M}(F^n)\left(1-CM_0\right).
\end{align*}
Therefore for sufficiently small $M_0$, we have $\mathcal{S}_{Pr}(F^n)\geq0$. Then the non-negativity of $F^n$ follows directly from the mild formulation of \eqref{Iter}:
\begin{multline*}
F^{n+1}(x,v,t) = e^{-\int_0^t\frac{1}{\tau(F^n)}dt}F_0(x-vt,v)\cr
+\frac{1}{\tau(F^n)}e^{-\int_s^t\frac{1}{\tau(F^n)}dt}\int_0^{T_*}\mathcal{S}_{Pr}(F^n)(x+v(s-t),v,s)ds.
\end{multline*}
Now we prove the uniform boundedness of the energy norm. For this, we substitute $F^{n+1}=m+\sqrt{m}f^{n+1}$ into \eqref{Iter} to get
\begin{align*}
\partial_tf^{n+1}+v\cdot \nabla_xf^{n+1} +\frac{1}{\tau_0}f^{n+1} &= \frac{1}{\tau_0}P_{Pr}(f^n)+\Gamma(f^n), \cr
f^{n+1}(x,v,0) &= f_0(x,v),
\end{align*}
where $f_0^n(x,v)=(F_0(x,v)-m)/\sqrt{m}$. Taking $\partial^{\alpha}_{\beta}$ on both sides: 
\begin{align*}
\partial_t\partial^{\alpha}_{\beta} f^{n+1}+v\cdot \nabla_x \partial^{\alpha}_{\beta} f^{n+1} +\frac{1}{\tau_0}\partial^{\alpha}_{\beta} f^{n+1}+ \sum_{i=1}^3 \partial^{\alpha+\bar{k}_i}_{\beta-k_i}\partial^{\alpha}_{\beta} f^{n+1}&= \frac{1}{\tau_0}\partial_{\beta}P_{Pr}(\partial^{\alpha}f^n)+\partial^{\alpha}_{\beta}\Gamma(f^n),
\end{align*}
and taking  product with $ \partial^{\alpha}_{\beta} f^{n+1}$ yields
\begin{multline*}
\frac{1}{2}\frac{d}{dt}\|\partial^{\alpha}_{\beta} f^{n+1}\|_{L^2_{x,v}}^2+\frac{1}{\tau_0}\|\partial^{\alpha}_{\beta} f^{n+1}\|_{L^2_{x,v}}^2 \leq  \sum_{i=1}^3 \int_{\mathbb{R}^3\times \mathbb{T}^3}\partial^{\alpha}_{\beta} f^{n+1}\partial^{\alpha+\bar{k}_i}_{\beta-k_i}\partial^{\alpha}_{\beta} f^{n+1}dvdx \cr
+ \frac{1}{\tau_0}\int_{\mathbb{R}^3\times \mathbb{T}^3}(\partial^{\alpha}_{\beta} f^{n+1}\partial_{\beta}P_{Pr}(\partial^{\alpha}f^n)+\partial^{\alpha}_{\beta} f^{n+1}\partial^{\alpha}_{\beta}\Gamma(f^n))dvdx,
\end{multline*}
where $k_i~(i=1,2,3)$ are coordinate unit vectors  and $\bar{k}_1=(0,1,0,0)$, $\bar{k}_2=(0,0,1,0)$, $\bar{k}_3=(0,0,0,1)$.
We then integrate for time and recall Proposition \ref{prop} to derive
\begin{align*}
(1-CT_*)\mathcal{E}(f^{n+1})(t) \leq \left( \frac{1}{2}+CT_*+CT_*M_0^2\right)M_0.
\end{align*}
For sufficiently small $M_0$ and $T_*$, we conclude that
\begin{align*}
\mathcal{E}(f^{n+1})(t) \leq M_0.
\end{align*}
The remaining part can be obtained from the standard argument \cite{MR1908664,MR2000470,MR2095473}. We omit it. 
\end{proof}

\section{Coercivity estimate for $Pr>0$}

In this section, we fill up the degeneracy of the linearized Shakov operator $L_{Pr}$ and recover the full coercivity. As is observed in Proposition \ref{dichotomy} and Lemma \ref{ker}, the degeneracy of the linear operator $L_{Pr}$ is strictly larger when  $Pr=0$ than $Pr>0$. The case of $Pr >0$ can be treated by a rather standard argument, which is briefly presented in this section. 

Recall the macroscopic projection operator $P_c$ from Definition \ref{P_{Pr}f}:
\begin{align*}
P_cf=a_0(x,t)\sqrt{m}+\sum_{1\leq i\leq 3}b_{0i}(x,t)v_i\sqrt{m}+c_0(x,t)|v|^2\sqrt{m},
\end{align*}
where
\begin{align*}
&a_0(x,t)=\int_{\mathbb{R}^3} f\sqrt{m}dv-\frac{1}{2}\int_{\mathbb{R}^3} f(|v|^2-3)\sqrt{m}dv ,\cr
&b_{0i}(x,t)=\int_{\mathbb{R}^3} fv_i\sqrt{m}dv, \cr
&c_0(x,t)=\frac{1}{6}\int_{\mathbb{R}^3} f(|v|^2-3)\sqrt{m}dv,
\end{align*}
for $i=1,2,3$. Substituting $f=P_cf+(I-P_c)f$ into \eqref{perturb} gives 
\begin{align}\label{split0}
	\{\partial_t+v\cdot \nabla_x \}  \{ P_cf \} = l_0(f)+h_0(f),
\end{align}
where
\begin{align*}
	l_0(f)&= -\{\partial_t+v\cdot \nabla_x\}\{(I-P_c)f\}+\frac{1}{\tau_0}L_{Pr}\{(I-P_c)f\}, \quad 
	h_0(f)=\Gamma(f).
\end{align*}
By an explicit computation, the left-hand side of \eqref{split0} is expressed as a linear combination of the following $13$-basis 
$	\{\sqrt{m},v_i\sqrt{m},v_iv_j\sqrt{m},v_i|v|^2\sqrt{m}\}$ $(1\leq i,j\leq 3)$:
\begin{multline*}
\bigg\{\partial_ta_0+\sum_{1\leq i\leq 3}(\partial_{x_i}a_0+\partial_tb_{0i})v_i  +\sum_{1\leq i\leq 3}(\partial_{x_i}b_{0i}+\partial_tc_0)v_i^2\cr
+\sum_{i<j}(\partial_{x_i}b_{0j}+\partial_{x_j}b_{0i})v_iv_j 
+\sum_{1\leq i\leq 3}(\partial_{x_i}c_0)v_i|v|^2\bigg\}\sqrt{m}.
\end{multline*}
Let $(l_{0c}, l_{0i}, l_{0ij}, l_{0is}, l_{0ijs})$, and  $(h_{0c}, h_{0i}, h_{0ij}, h_{0is},  h_{0ijs})$ be the coefficient corresponding to the linear 
expansion w.r.t the above basis when $l_0$ and $h_0$ are expanded to the above $13$ basis, respectively. Then comparing the coefficients of both sides yields the following system:
\begin{align}\label{system1}
\begin{split}
	\partial_ta &= l_{0c}+h_{0c}, \cr
	\partial_{x_i}a+\partial_tb_i  &= l_{0i}+h_{0i},	\cr
	\partial_{x_i}b_i+ \partial_{t}c  &= l_{0ii} + h_{0ii},	\cr
	\partial_{x_i}b_j + \partial_{x_j}b_i &= l_{0ij}+h_{0ij} \quad (i\neq j),	\cr
	\partial_{x_i}c &= l_{0is} + h_{0is}.
\end{split}
\end{align}
for $i,j=1,2,3$. For the notational simplicity we define 
\begin{align*}
	\tilde{l}_0 &=l_{0c}+\sum_{1\leq i \leq 3}\left(l_{0i}+l_{0is}\right)+\sum_{1\leq i,j \leq 3}\left(l_{0ij}+ l_{0ijs}\right), \cr
	\tilde{h}_0 &= h_c+\sum_{1\leq i \leq 3}\left(h_0{i}+h_{0is}\right)+\sum_{1\leq i,j \leq 3}\left(h_{0ij}+ h_{0ijs}\right).
\end{align*}
The analysis for this system is now standard, which can be found, for example, in \cite{MR1908664,MR2000470,MR2095473,MR2779616} to yield
\begin{align*}
\sum_{|\alpha|\leq N}\|P_{Pr}\partial^{\alpha}f\|_{L^2_{x,v}} &\leq C\sum_{|\alpha|\leq N} \left(\|\partial^{\alpha}a\|_{L^2_x}+\|\partial^{\alpha}b\|_{L^2_x}+\|\partial^{\alpha}c\|_{L^2_x}\right) \cr
&\leq C \sum_{|\alpha|\leq N-1}\left( \|\partial^{\alpha}\tilde{l}_0\|_{L_{x}^2} + \|\partial^{\alpha}\tilde{h}_0\|_{L_{x}^2}\right) \cr
&\leq  C \sum_{|\alpha|\leq N}\|(I-P_{Pr})\partial^{\alpha}f\|_{L^2_{x,v}} +CM_0 \sum_{|\alpha|\leq N}\|\partial^{\alpha}f\|_{L^2_{x,v}},
\end{align*}
for sufficiently small $\mathcal{E}(t)$. This, combined with the degenerate coercive estimate in Proposition \ref{dichotomy} (1), leads to the following full coercivity estimate for sufficiently small $\mathcal{E}(t)$: 
\begin{align}\label{full coer0}
\sum_{|\alpha|\leq N}\langle L_{Pr}\partial^{\alpha}f, \partial^{\alpha}f\rangle_{L^2_{x,v}} \leq -\delta \sum_{|\alpha|\leq N}\|\partial^{\alpha}f\|_{L^2_{x,v}}^2.
\end{align}
%
%
%
\section{Coercivity estiamte for $Pr=0$}
Due to the bigger degeneracy of $L_{Pr}$ in the case of $Pr=0$, a new idea is to need to recover the full coercivity. More precisely, the presence of an additional 3-dimensional null space leads to a bigger differential system than \eqref{system1} that involves non-conservative quantities, which cannot be merged into the coercivity estimate using the previous arguments.  

First,  we write $P_{Pr}f$ in Definition \ref{P_{Pr}f} as
\begin{align*}
P_{Pr}f=a(x,t)\sqrt{m}+\sum_{1\leq i\leq 3}b_i(x,t)v_i\sqrt{m}+c(x,t)|v|^2\sqrt{m}+\sum_{1\leq i\leq 3}d_i(x,t)v_i|v|^2\sqrt{m},
\end{align*}
where
\begin{align*}
&a(x,t)=\int_{\mathbb{R}^3} f\sqrt{m}dv-\frac{1}{2}\int_{\mathbb{R}^3} f(|v|^2-3)\sqrt{m}dv ,\cr
&b_i(x,t)=\int_{\mathbb{R}^3} fv_i\sqrt{m}dv-\frac{1}{2}\int_{\mathbb{R}^3} fv_i(|v|^2-5)\sqrt{m}dv, \cr
&c(x,t)=\frac{1}{6}\int_{\mathbb{R}^3} f(|v|^2-3)\sqrt{m}dv, \cr
&d_i(x,t)=\frac{1}{10}\int_{\mathbb{R}^3} fv_i(|v|^2-5) \sqrt{m}dv,
\end{align*}
for $i=1,2,3$. Substituting $f=P_{Pr}f+(I-P_{Pr})f$ into \eqref{perturb}, we get the similar equation as in \eqref{split0}:
\begin{align}\label{split}
\{\partial_t+v\cdot \nabla_x \}  \{ P_{Pr}f \} = l(f)+h(f),
\end{align}
where
\begin{align*}
l(f)&= -\{\partial_t+v\cdot \nabla_x\}\{(I-P_{Pr})f\}+\frac{1}{\tau_0}L_{Pr}\{(I-P_{Pr})f\}, \quad 
h(f)=\Gamma(f).
\end{align*}
This time, we expand the L.H.S. of \eqref{split} using the following $19$-basis $(1\leq i,j\leq 3)$:
\begin{align}\label{basis}
\{\sqrt{m},v_i\sqrt{m},v_iv_j\sqrt{m},v_i|v|^2\sqrt{m},v_iv_j|v|^2\sqrt{m}\},
\end{align}
to get
\begin{multline*}
\bigg\{\partial_ta+\sum_{1\leq i\leq 3}(\partial_{x_i}a+\partial_tb_i)v_i  +\sum_{1\leq i\leq 3}(\partial_{x_i}b_i+\partial_tc)v_i^2+\sum_{i<j}(\partial_{x_i}b_j+\partial_{x_j}b_i)v_iv_j \cr
+\sum_{1\leq i\leq 3}(\partial_{x_i}c+\partial_td_i)v_i|v|^2+\sum_{1\leq i\leq 3}\partial_{x_i}d_iv_i^2|v|^2+\sum_{i<j}(\partial_{x_i}d_j+\partial_{x_j}d_i)v_iv_j|v|^2\bigg\}\sqrt{m}.
\end{multline*}
Let $(l_c, l_i, l_{ij}, l_{is}, l_{ijs})$, and  $(h_c, h_i, h_{ij}, h_{is},  h_{ijs})$ be the coefficient for the linear expansion of  $l$ and $h$ w.r.t \eqref{basis} respectively. Then we can derive the following system:
\begin{align}\label{system}
\begin{split}
\partial_ta &= l_c+h_c, \cr
\partial_{x_i}a+\partial_tb_i  &= l_{i}+h_{i},	\cr
\partial_{x_i}b_i+\partial_{t}c  &= l_{ii} + h_{ii},	\cr
\partial_{x_i}b_j + \partial_{x_j}b_i &= l_{ij}+h_{ij} \quad (i\neq j),	\cr
\partial_{x_i}c+\partial_td_i &= l_{is} + h_{is}, \cr
\partial_{x_i}d_i &= l_{iis} + h_{iis}, \cr
\partial_{x_i}d_j+\partial_{x_j}d_i &= l_{ijs} + h_{ijs} \quad (i\neq j),
\end{split}
\end{align}
for $i,j=1,2,3$. For a notational simplicity we define 
\begin{align*}
\tilde{l} &=l_c+\sum_{1\leq i \leq 3}\left(l_{i}+l_{is}\right)+\sum_{1\leq i,j \leq 3}\left(l_{ij}+ l_{ijs}\right), \cr
\tilde{h} &= h_c+\sum_{1\leq i \leq 3}\left(h_{i}+h_{is}\right)+\sum_{1\leq i,j \leq 3}\left(h_{ij}+ h_{ijs}\right).
\end{align*}
The desired full coercivity estimate is stated in the following theorem. Note that we need an additional moment condition on the initial data. 

\begin{theorem}\label{full coer}
	Let $Pr=0$ and $|\alpha|\leq N$. Let $f$ be the local smooth solution obtained in Theorem \ref{local}. Suppose further that the third moment of the initial data is zero: 
	\begin{align}\label{assum}
	\int_{\mathbb{T}^3\times\mathbb{R}^3} F_0(x,v)v_i|v|^2dvdx= 0,
	\end{align}
	for $i=1,2,3$. Then we have
	\begin{align*}
	\sum_{|\alpha|\leq N}\langle L_{Pr}\partial^{\alpha}f, \partial^{\alpha}f\rangle_{L^2_{x,v}} \leq -\delta \sum_{|\alpha|\leq N}\|\partial^{\alpha}f\|_{L^2_{x,v}}^2+C\mathcal{E}^2(t).
	\end{align*}
\end{theorem}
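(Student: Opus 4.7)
The plan is to upgrade the degenerate identity of Proposition \ref{dichotomy}(2),
\[
\sum_{|\alpha|\leq N}\langle L_{Pr}\partial^{\alpha}f, \partial^{\alpha}f\rangle_{L^2_{x,v}} \;=\; -\sum_{|\alpha|\leq N}\|(I-P_{Pr})\partial^{\alpha}f\|_{L^2_{x,v}}^2,
\]
into a full coercivity by controlling the eight-dimensional macroscopic part $\|P_{Pr}\partial^{\alpha}f\|_{L^2_{x,v}}^2$, which is equivalent to $\|\partial^{\alpha}(a,b,c,d)\|_{L^2_x}^2$. The conservative coefficients $(a,b,c)$ can be estimated by adapting the micro--macro argument of Section 4; the genuinely new part is the non-conservative heat-flux coefficient $d$, whose zero-frequency mode is not pinned down by any conservation law and is responsible for the $C\mathcal{E}^2(t)$ defect in the target inequality.

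\textbf{Derivative estimates via the micro--macro system.} For $1\leq|\alpha|\leq N$, I treat the system \eqref{system} as an over-determined elliptic system in $(a,b,c,d)$ whose right-hand sides are the $\tilde l$--terms (linear in $(I-P_{Pr})f$ and its first time/space derivatives) and the $\tilde h$--terms (equal to $\Gamma(f)$). Differentiating the system and performing the standard sequence of manipulations--apply $\partial^{\alpha-1}$ to isolate each unknown, take an $L^2_x$ inner product against a space derivative of another unknown, integrate by parts, and use Poincar\'e on $\mathbb{T}^3$ against the (trivially vanishing) spatial mean of $\partial^{\alpha-1}$--yields
\[
\sum_{1\leq|\alpha|\leq N}\|\partial^{\alpha}(a,b,c,d)\|_{L^2_x}^2 \;\leq\; C\sum_{|\alpha|\leq N}\|(I-P_{Pr})\partial^{\alpha}f\|_{L^2_{x,v}}^2 + C\,\mathcal{E}^2(t),
\]
where the nonlinear contribution of $\tilde h$ is bounded through Proposition \ref{prop}. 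The key observation is that the presence of the extra $d_i$-equations in \eqref{system} does not break the algebraic structure: $\partial_{x_i}d_j+\partial_{x_j}d_i = l_{ijs}+h_{ijs}$ and $\partial_{x_i}d_i = l_{iis}+h_{iis}$ together form a Korn-type system that gives $\|\nabla d\|_{L^2_x}$ from microscopic data.

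\textbf{Zero-frequency mode.} The case $|\alpha|=0$ is where the non-conservative nature of $d$ bites. The conservation laws \eqref{conservf} combined with the matching of initial moments force the spatial means of $a$, $b$, $c$ to vanish, so Poincar\'e closes the estimate for them. For $d_i$, I use the evolution identity recorded in the introduction,
\[
\frac{d}{dt}\int_{\mathbb{T}^3\times\mathbb{R}^3} f\,v_i(|v|^2-5)\sqrt{m}\,dv\,dx \;=\; \frac{1}{\tau}\int_{\mathbb{T}^3}\!\Bigl(2U_i\rho T - 2\sum_{j}\rho U_j\Theta_{ij}\Bigr)dx,
\]
in which every integrand on the right is a product of at least two macroscopic perturbations ($U$ or $\Theta-I$). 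By Lemma \ref{macro} each such integrand is pointwise bounded by $C\|f(x,\cdot)\|_{L^2_v}^2$, so the right-hand side is dominated by $C\|f(s)\|_{L^2_{x,v}}^2$. Starting from hypothesis \eqref{assum}, which gives $\bar d_i(0)=0$, integration in time yields
\[
|\bar d_i(t)| \;\leq\; C\int_0^t \|f(s)\|_{L^2_{x,v}}^2\,ds \;\leq\; C\,\mathcal{E}(t),
\]
and therefore $|\bar d_i(t)|^2 \leq C\,\mathcal{E}^2(t)$. Combined with Poincar\'e and the derivative estimate above, this controls $\|d_i\|_{L^2_x}^2$ up to the allowed $C\mathcal{E}^2(t)$ remainder.

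\textbf{Conclusion and main obstacle.} Summing the estimates for $a,b,c,d$ over all $|\alpha|\leq N$, I obtain
\[
\sum_{|\alpha|\leq N}\|P_{Pr}\partial^{\alpha}f\|_{L^2_{x,v}}^2 \;\leq\; C\sum_{|\alpha|\leq N}\|(I-P_{Pr})\partial^{\alpha}f\|_{L^2_{x,v}}^2 + C\,\mathcal{E}^2(t),
\]
and combining this with Proposition \ref{dichotomy}(2) (splitting off a small multiple of $\sum\|\partial^{\alpha}f\|^2$ and absorbing) produces the inequality of the theorem with a suitable $\delta>0$. The substantive obstacle is precisely the treatment of $\bar d_i(t)$: unlike $\bar a,\bar b,\bar c$ in the $Pr>0$ case, it is not killed by a conservation law, and its control forces one to read off smallness from the \emph{time-integrated} dissipation built into $\mathcal{E}(t)$. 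This is exactly why the production term $\int_0^t\|\partial^{\alpha}_{\beta}f\|^2\,ds$ was included in the definition \eqref{energy} of the energy functional, as noted in the introduction.
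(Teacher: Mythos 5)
Your proposal is correct and follows essentially the same route as the paper: micro--macro estimates for $(a,b,c,d)$ from the enlarged system (including the Korn-type control of $\nabla_x d$), with the zero mode of $d$ handled through the vanishing initial third moment, the evolution law for $\int_{\mathbb{T}^3} d_i\,dx$, and the time-integrated production term built into $\mathcal{E}(t)$, all finally combined with Proposition \ref{dichotomy}(2) to absorb the macroscopic part. The one step you should make explicit is that the right-hand side $2U_i\rho T-2\sum_j\rho U_j\Theta_{ij}$ is not quadratic term by term (e.g.\ $2U_i\rho T$ alone is only first order since $\rho T\approx 1$); the quadratic smallness comes from the cancellation $2U_i\rho T-2\rho U_i\Theta_{ii}=2\rho U_i(T-\Theta_{ii})$ with $3T=\sum_k\Theta_{kk}$, which is exactly what the paper exploits through its estimate of $\rho(\Theta_{jj}-\Theta_{ii})$.
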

The following estimate of macroscopic variables is the key estimate for the proof of Theorem \ref{full coer}.
\begin{proposition}\label{abcd} Under the same assumption as in Theorem \ref{full coer}, we have
\begin{align*}
\|\partial^{\alpha}a\|_{L_{x}^2}+\|\partial^{\alpha}b\|_{L_{x}^2}+\|\partial^{\alpha}c\|_{L_{x}^2}+\|\partial^{\alpha}d\|_{L_{x}^2} 
\leq C \sum_{|\alpha|\leq N-1}\left( \|\partial^{\alpha}\tilde{l}\|_{L_{x}^2} + \|\partial^{\alpha}\tilde{h}\|_{L_{x}^2}\right)+C\mathcal{E}(t).
\end{align*}
\end{proposition}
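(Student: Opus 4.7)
The plan is to carry out the standard macro-dissipation (elliptic-type) argument for near-Maxwellian kinetic equations, enlarged to accommodate the additional non-conservative mode $d_i$. I will split the estimate into two regimes: higher-order derivatives, which are handled via the micro-macro system \eqref{system}, and the lowest-order piece, which requires the Poincar\'{e} inequality together with a careful treatment of the spatial means.

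\textbf{Higher-order estimates.} For $|\alpha|\geq 1$, I will read off bounds on $\partial^{\alpha}(a,b,c,d)$ from \eqref{system} in the manner of \cite{MR1908664,MR2095473,MR2779616}. Specifically, the symmetric-gradient equations $\partial_{x_i}b_j+\partial_{x_j}b_i=l_{ij}+h_{ij}$ together with $\partial_{x_i}b_i+\partial_t c=l_{ii}+h_{ii}$ recover $\nabla_x b$ (modulo a time derivative of $c$) by a Korn-type argument; the last two lines of \eqref{system} give $\nabla_x d$ analogously. Spatial derivatives of $a$ and $c$ are then extracted from the second and fifth equations, and time derivatives of macroscopic variables are absorbed back into the same system by iteration. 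This should produce
\begin{align*}
\|\partial^{\alpha}(a,b,c,d)\|_{L^2_x}\;\leq\; C\sum_{|\alpha'|\leq|\alpha|-1}\bigl(\|\partial^{\alpha'}\tilde{l}\|_{L^2_x}+\|\partial^{\alpha'}\tilde{h}\|_{L^2_x}\bigr),
\end{align*}
for $1\leq|\alpha|\leq N$, which accounts for the first piece of the proposition's right-hand side.

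\textbf{Lowest-order estimate.} For $|\alpha|=0$, Poincar\'{e} on $\mathbb{T}^3$ reduces $\|X\|_{L^2_x}$ to $\|\nabla_x X\|_{L^2_x}$ (already handled) plus $|\int_{\mathbb{T}^3} X\,dx|$ for $X\in\{a,b,c,d\}$. The conservation laws \eqref{conservf}, combined with the initial-data hypothesis matching the mass, momentum, and energy of $m$, force $\int a = \int c = 0$ directly. The means of $b$ and $d$ are more delicate: from the definitions, $\int_{\mathbb{T}^3}b_i\,dx$ differs from a multiple of $\int_{\mathbb{T}^3}d_i\,dx$ only by the conserved momentum integral, so both reduce to controlling $\int d_i\,dx$. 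Here I invoke the hypothesis $\int F_0 v_i|v|^2\,dv\,dx=0$ and the third-moment evolution identity
\begin{align*}
\frac{d}{dt}\int_{\mathbb{T}^3\times\mathbb{R}^3}f v_i(|v|^2-5)\sqrt{m}\,dv\,dx\;=\;\frac{1}{\tau}\int_{\mathbb{T}^3}\Bigl(2U_i\rho T - 2\sum_j \rho U_j\Theta_{ij}\Bigr)\,dx.
\end{align*}
Since the integrand on the right vanishes at the equilibrium state $(\rho,U,\Theta)=(1,0,I)$, Taylor expansion together with Lemma \ref{macro} should bound it by $C\|f\|_{L^2_{x,v}}^2$. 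Integrating in time from zero initial mean then yields $|\int d_i\,dx|\leq C\int_0^t\|f\|_{L^2_{x,v}}^2\,ds\leq C\mathcal{E}(t)$, which is precisely the second piece of the bound.

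The main obstacle I anticipate is exactly this mean-estimate for $d_i$ (and thereby for $b_i$). Without the extra third-moment hypothesis on $F_0$, the integrated mean cannot be made small; without the time-integrated production term built into $\mathcal{E}(t)$, the quadratic right-hand side of the evolution identity cannot be repackaged as $C\mathcal{E}(t)$. Both structural ingredients are therefore indispensable, and they reflect the genuine novelty of the $Pr=0$ regime. The higher-order step, by contrast, is algebraic manipulation on \eqref{system} and, while bookkeeping-heavy, presents no conceptual difficulty.
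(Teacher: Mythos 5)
Your overall strategy coincides with the paper's: derivative bounds from the micro--macro system \eqref{system}, Poincar\'{e} at the bottom order, and control of the spatial mean of $d_i$ through the third-moment evolution identity, the vanishing initial third moment, and the time-integrated production term in $\mathcal{E}(t)$. Your reduction of $\int_{\mathbb{T}^3}b_i\,dx$ to $-5\int_{\mathbb{T}^3}d_i\,dx$ via momentum conservation is correct and in fact spelled out more carefully than in the paper, and your treatment of the $|\alpha|=0$ mean of $d_i$ matches the paper's Step 3 (the quadratic structure of $2U_i\rho T-2\sum_j\rho U_j\Theta_{ij}$, time integration, and Sobolev embedding for the cubic remainder).

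There is, however, a gap in your higher-order step. You assert that for $|\alpha|\geq 1$ everything follows algebraically from \eqref{system} ``by iteration,'' but the pair consisting of $\nabla_x c$ (with no time derivative) and $\partial_t d_i$ (one pure time derivative, no spatial derivative) enters the system only through the single combination $\partial_{x_i}c+\partial_t d_i=l_{is}+h_{is}$; no amount of substitution within \eqref{system} separates the two, since the only other equation containing $c$ is $\partial_{x_i}b_i+\partial_t c=l_{ii}+h_{ii}$, which carries a time derivative. (Pure time derivatives of order $\geq 2$ of $d_i$ are unproblematic: there $\partial_t^{k-1}\partial_{x_i}c$ has at least one time derivative and is controlled through the $\partial_t c$ estimate.) The paper breaks this circularity by estimating $\partial_t d_i$ itself via Poincar\'{e}: the gradient $\|\nabla_x\partial_t d_i\|_{L^2_x}$ comes from the elliptic (Korn-type) estimate built from the last two lines of \eqref{system}, while the mean $|\int_{\mathbb{T}^3}\partial_t d_i\,dx|$ is bounded by $C\mathcal{E}(t)$ directly from the evolution identity \eqref{dd} \emph{without} time integration; only then is $\nabla_x c$ recovered from the fifth equation, and $\|c\|_{L^2_x}$ from Poincar\'{e} with vanishing mean. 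You already have every ingredient needed for this repair (you even note that the right-hand side of the evolution identity is pointwise-in-time of size $\|f\|_{L^2_{x,v}}^2$), but as written your proposal deploys the mean argument only at $|\alpha|=0$, so the estimate of $\partial_t d_i$ and of $\nabla_x c$ is not closed. Note also that this is why the extra $C\mathcal{E}(t)$ must appear in the bound even for first-order derivatives, not only at the bottom order.
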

\begin{proof}
Note that the new variable $d$ is coupled only with $c$. Therefore, the estimates for $a$ and $b$ are the same as the previous $Pr>0$ case, which is
\begin{align}\label{ab}
\|\partial^{\alpha}a\|_{L_{x}^2}+\|\partial^{\alpha}b\|_{L_{x}^2}
\leq C \sum_{|\alpha|\leq N-1}\left( \|\partial^{\alpha}\tilde{l}\|_{L_{x}^2} + \|\partial^{\alpha}\tilde{h}\|_{L_{x}^2}\right).
\end{align}
We divide the estimate of $c$ and $d$ into the following four steps:
\begin{align*}
	&(\mbox{Step} ~ 1) \quad  \|\partial^{\alpha}\partial_tc\|_{L_{x}^2},	\cr
	&(\mbox{Step} ~ 2) \quad	\|\nabla_x\partial^{\alpha}d_i\|_{L_{x}^2}, \quad (|\alpha|\leq N-1)\cr
	&(\mbox{Step} ~ 3) \quad \|\partial^{\gamma}d_i\|_{L_{x}^2},  \cr
	&(\mbox{Step} ~ 4) \quad \|c\|_{L_{x}^2}+\|\nabla_x\partial^{\alpha}c\|_{L_{x}^2},
\end{align*}
where $\partial^{\gamma}$ is pure time derivatives for $|\gamma|\leq N$, that is, $\partial^{\gamma}=\partial_t^{|\gamma|}$. \newline

\noindent$\bullet$ {\bf Step 1.} The estimate of $\|\partial^{\alpha}\partial_tc\|_{L_{x}^2}$: Taking $\partial^{\alpha}$ on $\eqref{system}_3$ gives
\begin{align*}
\partial^{\alpha}\partial_{t}c &= \partial^{\alpha}l_{ii} + \partial^{\alpha}h_{ii}-\partial^{\alpha}\partial_{x_i}b_i.
\end{align*}
Multiplying both sides by $\partial^{\alpha}\partial_{t}c$  and applying the H\"{o}lder inequality yields 
\begin{align*}
\|\partial^{\alpha}\partial_{t}c\|_{L^2_x} &\leq \|\partial^{\alpha}l_{ii}\|_{L^2_x} + \|\partial^{\alpha}h_{ii}\|_{L^2_x}+\|\partial^{\alpha}\partial_{x_i}b_i\|_{L^2_x}.
\end{align*}
We then combine this with the estimate of $b$ in \eqref{ab} to get
\begin{align*}
\|\partial^{\alpha}\partial_{t}c\|_{L^2_x} &\leq C\sum_{|\alpha|\leq N-1} \left(\|\partial^{\alpha}\tilde{l}\|_{L^2_x}+\|\partial^{\alpha}\tilde{h}\|_{L^2_x}\right).
\end{align*}
\noindent$\bullet$ {\bf Step 2.} The estimate of $\|\nabla_x\partial^{\alpha}d_i\|_{L_{x}^2}$: Using $\eqref{system}_6$ and $\eqref{system}_7$, we compute
\begin{align*}
	\triangle d_i 
	&= \sum_{1\leq j\leq 3}\partial_{jj}d_i\\ 
	&= \sum_{j \neq i} \partial_{jj} d_i +\partial_{ii} d_i \\
	&= \sum_{j \neq i} \left(\partial_{j}l_{ijs} + \partial_{j}h_{ijs}-\partial_{ji} d_j\right) + \partial_{i}l_{iis} + \partial_{i}h_{iis}.
\end{align*}
We then use $\eqref{system}_6$ to get
\begin{align*}
	\triangle d_i &= \sum_{j \neq i} \left(\partial_{j}l_{ijs} + \partial_{j}h_{ijs}-\partial_{i}l_{jjs} -\partial_{i}h_{jjs}\right) + \partial_{i}l_{iis} + \partial_{i}h_{iis},
\end{align*}
which implies
\begin{align*}
\|\nabla_x d\|_{L^2_x} \leq C \sum_{1 \leq i,j \leq 3} (\|l_{ijs}\|_{L^2_x}+\|h_{ijs}\|_{L^2_x}+\|l_{iis}\|_{L^2_x}+\|h_{iis}\|_{L^2_x}).
\end{align*}
The same argument holds when  $d_i$ is replaced by $\partial^{\alpha}d_i$. This completes the proof of Step 2. \newline 

\noindent$\bullet$ {\bf Step 3.} The estimate of $\|\partial^{\gamma}d_i\|_{L_{x}^2}$: We divide the proof into the following 3 cases: $|\gamma|=0$ $|\gamma|=1$ and $2\leq |\gamma|\leq N$. We start with $|\gamma|=1$.\newline

\noindent (1) The case of $|\gamma|=1$: We employ the  Poincar\'{e} inequality to derive
\begin{align}\label{poin d}
	\|\partial_t d_i\|_{L^2_x} &\leq \|\nabla_x \partial_t d_i\|_{L^2_x}+C\Big\|\int_{\mathbb{T}^3}\partial_t d_i dx\Big\|_{L^2_x}.
\end{align}
Note that, in the case of $a$, $b$, $c$, the last term on the R.H.S. vanishes due to the conservation laws, which is not the case for non-conservative quantity $d$.
To control $\int \partial^{\gamma}d_i dx$, we multiply $v_i|v|^2$ and integrate with respect to $dvdx$ on the equation \eqref{Shakhov}.
\begin{align}\label{b0}
\frac{d}{dt}\int_{\mathbb{T}^3\times\mathbb{R}^3}Fv_i|v|^2 dvdx &= \frac{1}{\tau}\int_{\mathbb{T}^3\times\mathbb{R}^3}(\mathcal{S}_{Pr}(F)-F)v_i|v|^2~ dvdx.
\end{align}
We recall from $\eqref{viv2}_2$ that the energy flux can be expressed as follows:
\begin{align}\label{b1}
\int_{\mathbb{R}^3}F(x,v,t)v_i|v|^2dv &=q_i +\sum_{1\leq j \leq 3}2\rho U_j\Theta_{ij}+ \rho U_i|U|^2 + U_i \rho  \sum_{1\leq i \leq 3 }\Theta_{ii}.
\end{align}
On the other hand, an explicit computation using the following decomposition of $v_i|v|^2$
\begin{multline*}
v_i|v|^2 =(v_i-U_i)|v-U|^2 +2(v_i-U_i)(v-U)\cdot U +(v_i-U_i)|U|^2 \cr
+U_i|v-U|^2 + 2U_i(v-U)\cdot U + U_i|U|^2,
\end{multline*}
gives
\begin{align}\label{b2}
\int_{\mathbb{R}^3}\mathcal{S}_{Pr}(F)(x,v,t)v_i|v|^2dv&=(1-Pr)q_i +2U_i\rho T + \rho U_i|U|^2 + U_i 3\rho  T.
\end{align}
Inserting \eqref{b1} and \eqref{b2} into \eqref{b0}, we get the following evolution law for the energy flux for $Pr=0$: 
\begin{align}\label{intv|v|^2}
\frac{d}{dt}\int_{\mathbb{T}^3\times\mathbb{R}^3}F(x,v,t)v_i|v|^2dvdx= \frac{1}{\tau}\int_{\mathbb{T}^3}\bigg(2U_i\rho T - \sum_{1\leq j \leq 3}2\rho U_j\Theta_{ij}\bigg)dx,
\end{align}
which, combined with the momentum conservation law:
\begin{align*}
\frac{d}{dt}\int_{\mathbb{T}^3\times\mathbb{R}^3}F(x,v,t)v~dvdx= 0,
\end{align*}
and
\[
\int_{\mathbb{R}^3}v_i|v|^2mdv= 0 ,
\]
gives the evolution law for $d_i$:
\begin{align}\label{dd}
\frac{d}{dt}\int_{\mathbb{T}^3}d_i(x,t)dx= \frac{1}{10\tau} \bigg( \int_{\mathbb{T}^3}2U_i\rho T - \sum_{1\leq j \leq 3}2\rho U_j\Theta_{ij}dx\bigg).
\end{align} 
Then, from Lemma \ref{macro},
\[U,~ \rho T,~ \rho \Theta \leq \| f \|_{L^2_v},\] 
we get
\begin{align}\label{d1}
\bigg|\int_{\mathbb{T}^3}\partial_td_i(x,t)dx\bigg|=\bigg|\frac{d}{dt}\int_{\mathbb{T}^3}d_i(x,t)dx\bigg|\leq C\int_{\mathbb{T}^3} \| f \|_{L^2_v}^2dx \leq C\mathcal{E}(t).
\end{align}
Inserting this, into the Poincar\'{e} inequality \eqref{poin d}, we get the desired estimate for $|\gamma|=1$:
\begin{align*}
\|\partial_t d_i\|_{L^2_x} &\leq \|\nabla_x \partial_t d_i\|_{L^2_x}+C\mathcal{E}(t).
\end{align*}
\newline
\noindent (2) The case of $|\gamma|=0$: 
We note from  \eqref{assum} that 
\begin{align*}
\int_{\mathbb{T}^3}d_i(x,0)dx 
=\int_{\mathbb{T}^3\times\mathbb{R}^3} \frac{v_i(|v|^2-5)}{10}F(x,v,0)~dvdx = 0.
\end{align*}
Therefore, integration \eqref{dd} with respect to $dt$ gives
\begin{align}\label{d-d0}
\begin{split}
\int_{\mathbb{T}^3}d_i(x,t)dx = \frac{1}{10\tau}\int_0^t \int_{\mathbb{T}^3}\frac{2}{3}U_i\rho (\Theta_{11}+\Theta_{22}+\Theta_{33}-3\Theta_{ii}) - \sum_{j \neq i }2\rho U_j\Theta_{ij}dxdt,
\end{split}
\end{align}
where we used $3T=\Theta_{11}+\Theta_{22}+\Theta_{33}$.
To estimate the first term of the R.H.S., we observe 
\begin{align*}
\rho(\Theta_{jj}-\Theta_{ii}) &= \int_{\mathbb{R}^3}F\left(v_j^2-v_i^2\right)dv +\rho U_j^2-\rho U_i^2.
\end{align*}
Therefore, substituting $F=m+\sqrt{m}f$ and applying the H\"{o}lder inequality, we get 
\begin{align}\label{jj-ii}
\begin{split}
\big|\rho(\Theta_{jj}-\Theta_{ii})\big| &\leq  \int_{\mathbb{R}^3}(m+\sqrt{m}f)\left(v_j^2-v_i^2\right)dv +\frac{(\rho U_j)^2+(\rho U_i)^2}{\rho} \cr
&\leq C \| f \|_{L^2_v} +C \| f \|_{L^2_v}^2,
\end{split}
\end{align}
where we used the lower bound of $\rho$ in Lemma \ref{macro} (1) and 
\begin{align*}
\int_{\mathbb{R}^3}m\left(v_j^2-v_i^2\right)dv = 0 .
\end{align*}
We insert \eqref{jj-ii} into \eqref{d-d0}, and apply $\rho U/\rho \leq C\| f \|_{L^2_v}$ and $\rho \Theta \leq C\| f \|_{L^2_v}$ to obtain 
\begin{align*}
\bigg|\int_{\mathbb{T}^3}d_i(x,t)dx\bigg| &\leq C \int_0^t \int_{\mathbb{T}^3} \| f \|_{L^2_v}^2 + \| f \|_{L^2_v}^3dxdt .
\end{align*}
We then apply the Sobolev embedding $H^2 \subset\subset L^{\infty}$ to bound
\begin{align*}
\int_{\mathbb{T}^3} \| f \|_{L^2_v}^3dx \leq \sup_{x\in \mathbb{T}^3}\| f \|_{L^2_v}  \| f \|_{L^2_{x,v}}^2 \leq \sum_{|\alpha| \leq 2}\| \partial^{\alpha}f \|_{L^2_{x,v}}  \| f \|_{L^2_{x,v}}^2 \leq \sqrt{\mathcal{E}(t)}\| f \|_{L^2_{x,v}}^2.
\end{align*}
Therefore,  
\begin{align}\label{it}
\bigg|\int_{\mathbb{T}^3}d_i(x,t)dx\bigg| &\leq (1+\sqrt{M_0}) \int_0^t  \| f \|_{L^2_{x,v}}^2dt \leq  (1+\sqrt{M_0})\mathcal{E}(t),
\end{align}
where $M_0$ is from Theorem \ref{local} (1). We note that this is why we use the energy functional with the time integration of the production
term.
Combining \eqref{it} with \eqref{d1} gives 
\begin{align*}
\bigg|\int_{\mathbb{T}^3}\partial^{\gamma} d_i(x,t)dx\bigg| &\leq C(1+\sqrt{M_0}) \mathcal{E}(t),
\end{align*}
for $|\gamma|= 0,1 $. Substituting it into \eqref{poin d} yields 
\begin{align*}
	\|\partial^{\gamma} d_i\|_{L^2_x} &\leq \|\nabla_x \partial^{\gamma} d_i\|_{L^2_x} +C(1+\sqrt{M_0}) \mathcal{E}(t) \cr
	&\leq C  \left(\|\partial^{\gamma}\tilde{l}\|_{L^2_x}+\|\partial^{\gamma}\tilde{h}\|_{L^2_x}\right) +C(1+\sqrt{M_0})\mathcal{E}(t),
\end{align*}
where we used the result of (Step 2).\newline

\noindent (3) The case of $2 \leq 
|\gamma|\leq N$: We have from $\eqref{system}_5$
\begin{align*}
\partial_t^{|\gamma|} d_i &=\partial_t^{|\gamma|-1}l_{is} + \partial_t^{|\gamma|-1}h_{is} - \partial_t^{|\gamma|-1}\partial_{x_i}c.
\end{align*}
Since the last term $\partial_t^{|\gamma|-1}\partial_{x_i}c$ has at least one time derivative, we can apply the estimate of $\partial_t c$ in (Step 1):
\begin{align*}
\|\partial_t^{|\gamma|} d_i\|_{L^2_x} &\leq  C\left(\|\partial_t^{|\gamma|-1}l_{is}\|_{L^2_x}+\|\partial_t^{|\gamma|-1}h_{is}\|_{L^2_x}+\| \partial_t^{|\gamma|-1}\partial_{x_i}c\|_{L^2_x}\right) \cr
&\leq C \sum_{|\alpha|\leq N-1}(\|\partial^{\alpha}\tilde{l}\|_{L_{x}^2} + \|\partial^{\alpha}\tilde{h}\|_{L_{x}^2}).
\end{align*}
Finally, we combine (1)-(3) to get the desired result: 
\begin{align*}
\|\partial^{\gamma} d_i\|_{L^2_x} &\leq  C \sum_{|\alpha|\leq N-1}(\|\partial^{\alpha}\tilde{l}\|_{L_{x}^2} + \|\partial^{\alpha}\tilde{h}\|_{L_{x}^2}) +C \mathcal{E}(t).
\end{align*}
\newline

\noindent$\bullet$ {\bf Step 4:} We first consider the estimate of $c$ which has at least one spatial derivative. We take $\partial^{\alpha}$ on $\eqref{system}_5$ to get
\begin{align*}
	\partial^{\alpha}\partial_{x_i}c &= \partial^{\alpha}l_{is} + \partial^{\alpha}h_{is}-\partial^{\alpha}\partial_td_i.
\end{align*}
Using (Step 3), we have
\begin{align*}
	\|\partial^{\alpha}\partial_{x_i}c\|_{L^2_x} &\leq  \|\partial^{\alpha}l_{is}\|_{L^2_x} + \|\partial^{\alpha}h_{is}\|_{L^2_x}+\|\partial^{\alpha}\partial_td_i \|_{L^2_x} \cr
	&\leq C \sum_{|\alpha|\leq N-1}(\|\partial^{\alpha}\tilde{l}\|_{L_{x}^2} + \|\partial^{\alpha}\tilde{h}\|_{L_{x}^2}) +C \mathcal{E}(t).
\end{align*}
Finally, we use the Poincar\'{e} inequality to get the estimate of $c$ without derivative:
\begin{align*}
\|c\|_{L^2_x} &\leq\|\nabla_xc\|_{L^2_x}\leq C \sum_{|\alpha|\leq N-1}(\|\partial^{\alpha}\tilde{l}\|_{L_{x}^2} + \|\partial^{\alpha}\tilde{h}\|_{L_{x}^2}) +C \mathcal{E}(t).
\end{align*}
This completes the proof of Proposition \ref{abcd}.
\end{proof}
Finally, we need to estimate the R.H.S. of \eqref{split}.
\begin{lemma} \label{52} Suppose that $\mathcal{E}(t)$ is sufficiently small.
Then we have
\begin{align*}
&(1) \ \sum_{|\alpha|\leq N-1} \|\partial^{\alpha}\tilde{l}\|_{L^2_x} \leq C \sum_{|\alpha|\leq N}\|(I-P_{Pr})\partial^{\alpha}f\|_{L^2_{x,v}}, \cr 
&(2) \ \sum_{|\alpha|\leq N} \|\partial^{\alpha}\tilde{h}\|_{L^2_x} \leq CM_0 \sum_{|\alpha|\leq N}\|\partial^{\alpha}f\|_{L^2_{x,v}}.
\end{align*}
\end{lemma}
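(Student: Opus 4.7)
The plan is to view every coefficient in $\tilde l$ and $\tilde h$ as an $L^2_v$ moment of $l(f)$ or $\Gamma(f)$ against a polynomial times $\sqrt{m}$, and then to estimate these moments directly: by Cauchy--Schwarz for part (1), and by Proposition \ref{prop} for part (2).

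For part (1), I would first exploit the identity $L_{Pr}(I-P_{Pr})f = -(I-P_{Pr})f$ --- valid in the $Pr=0$ case because $P_{Pr}=P_c+P_{nc}$ is a projection onto $\mathrm{Ker}\, L_{Pr}$ by Lemma \ref{P0Ps} and Lemma \ref{ker} --- to rewrite
\begin{align*}
l(f) = -(\partial_t + v\cdot\nabla_x)(I-P_{Pr})f - \tfrac{1}{\tau_0}(I-P_{Pr})f.
\end{align*}
Each of $l_c$, $l_i$, $l_{ij}$, $l_{is}$, $l_{ijs}$ is then obtained by taking the $L^2_v$ inner product of $l(f)$ against the dual polynomial-times-$\sqrt{m}$ weight associated to the 19-basis \eqref{basis}. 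Since each such dual weight lies in $L^2_v$, Cauchy--Schwarz yields a pointwise (in $x$) bound by $\|(I-P_{Pr})f\|_{L^2_v}$ together with a norm of $(\partial_t + v\cdot\nabla_x)(I-P_{Pr})f$ weighted by polynomial-Maxwellian. Because $P_{Pr}$ has constant coefficients in $x,t$ and thus commutes with $\partial^\alpha$, applying $\partial^\alpha$ with $|\alpha|\leq N-1$ gives
\begin{align*}
|\partial^\alpha l_*(x,t)| \leq C\sum_{|\alpha'|\leq |\alpha|+1} \|(I-P_{Pr})\partial^{\alpha'}f(x,\cdot,t)\|_{L^2_v},
\end{align*}
and then summing over $|\alpha|\leq N-1$ and taking $L^2_x$ yields (1).

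For part (2), the coefficients $h_*$ of $\Gamma(f)$ are $L^2_v$ moments of $\Gamma(f)$ against polynomial-Maxwellian weights $g$ with $\|g\|_{L^2_v}<\infty$. Applying Proposition \ref{prop} with $\beta=0$ and such a $g$ gives, pointwise in $x$,
\begin{align*}
|\partial^\alpha h_*(x,t)| \leq C\sqrt{\mathcal{E}(t)}\sum_{|\alpha_1|+|\alpha_2|+|\alpha_3|\leq |\alpha|}\|\partial^{\alpha_1}f\|_{L^2_v}\|\partial^{\alpha_2}f\|_{L^2_v}\|\partial^{\alpha_3}f\|_{L^2_v}.
\end{align*}
I then take $L^2_x$, place the two lower-index factors in $L^\infty_x$ via the Sobolev embedding $H^2(\mathbb{T}^3)\hookrightarrow L^\infty(\mathbb{T}^3)$ (absorbing them into another $\sqrt{\mathcal{E}(t)}$), while keeping the highest-index factor in $L^2_{x,v}$. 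The resulting bound
\begin{align*}
\|\partial^\alpha \tilde h\|_{L^2_x} \leq C\mathcal{E}(t)\sum_{|\alpha|\leq N}\|\partial^\alpha f\|_{L^2_{x,v}},
\end{align*}
together with the a priori bound $\mathcal{E}(t)\leq M_0$ from Theorem \ref{local}, yields (2).

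The only point that requires real care is the derivative bookkeeping in (1): because $l$ already carries the first-order operator $\partial_t+v\cdot\nabla_x$, the outer restriction $|\alpha|\leq N-1$ is exactly what allows up to $N$ derivatives on $(I-P_{Pr})f$ on the right, matching the stated index bound. Part (2) is then essentially immediate from Proposition \ref{prop} together with Sobolev embedding; no new idea beyond the smallness of $M_0$ is required.
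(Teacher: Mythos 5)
Your proposal is correct and is essentially the argument the paper has in mind: the paper omits the proof of Lemma \ref{52}, citing the standard macroscopic-coefficient estimates of Guo and Yun, and your treatment (reading $l_*$, $h_*$ as $L^2_v$ moments against dual polynomial-Maxwellian weights, Cauchy--Schwarz plus commutation of $P_{Pr}$ with $\partial^{\alpha}$ for (1), and Proposition \ref{prop} with Sobolev embedding and $\mathcal{E}(t)\leq M_0$ for (2)) is exactly that standard argument adapted to the $19$-element basis. The derivative bookkeeping ($|\alpha|\leq N-1$ outside, one extra derivative from $\partial_t+v\cdot\nabla_x$) matches the stated index ranges, so no gap.
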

\begin{proof}
This estimates are standard (See for example \cite{MR1908664,MR2000470,MR2779616}). The only difference is that, as we can see in \eqref{basis}, the number of basis changes from $13$ to $19$, so we omit it.
\end{proof}

Now we are ready to prove Theorem \ref{full coer}.
\subsection{Proof of Theorem \ref{full coer}}
From Proposition \ref{abcd}, we have Lemma \ref{52},
\begin{align*}
\sum_{|\alpha|\leq N}\|P_{Pr}\partial^{\alpha}f\|^2_{L^2_{x,v}} &\leq C\sum_{|\alpha|\leq N} \left(\|\partial^{\alpha}a\|^2_{L^2_x}+\|\partial^{\alpha}b\|^2_{L^2_x}+\|\partial^{\alpha}c\|^2_{L^2_x}+\|\partial^{\alpha}d\|^2_{L^2_x}\right) \cr
&\leq C \sum_{|\alpha|\leq N-1}\left( \|\partial^{\alpha}\tilde{l}\|_{L_{x}^2} + \|\partial^{\alpha}\tilde{h}\|_{L_{x}^2}+ C\mathcal{E}(t)\right)^2 \cr
&\leq  C \sum_{|\alpha|\leq N}\|(I-P_{Pr})\partial^{\alpha}f\|^2_{L^2_{x,v}} +CM_0 \sum_{|\alpha|\leq N}\|\partial^{\alpha}f\|^2_{L^2_{x,v}}+ C\mathcal{E}^2(t).
\end{align*}
Adding $\sum\|(I-P_{Pr})\partial^{\alpha}f\|^2_{L^2_{x,v}} $ on both side, we get
\begin{align*}
\sum_{|\alpha|\leq N}\|\partial^{\alpha}f\|^2_{L^2_{x,v}} &\leq C \sum_{|\alpha|\leq N}\|(I-P_{Pr})\partial^{\alpha}f\|^2_{L^2_{x,v}} + C\mathcal{E}^2(t),
\end{align*}
for the sufficiently small $ M_0$. This, combined with the degenerate coercivity estimate in Proposition \ref{dichotomy} (2), lead to the following modified coercivity estimate for sufficiently small $\mathcal{E}(t)$: 
\begin{align*}
\sum_{|\alpha|\leq N}\langle L_{Pr}\partial^{\alpha}f, \partial^{\alpha}f\rangle_{L^2_{x,v}} \leq -\delta \sum_{|\alpha|\leq N}\|\partial^{\alpha}f\|_{L^2_{x,v}}^2+C\mathcal{E}^2(t),
\end{align*}
for some positive constant $\delta>0$. This completes the proof of Theorem \ref{full coer}.

\section{Global existence}
In this section, we prove Theorem \ref{theorem}. We will only prove
\begin{align}\label{want}
	\sum_{\substack{|\alpha|+|\beta|\leq N \cr |\beta|\leq m}}\left\{C_{m_1}\frac{d}{dt}\|\partial^{\alpha}_{\beta}f\|_{L^2_{x,v}}^2 +\delta_m \sum_{|\alpha|\leq N}\|\partial^{\alpha}_{\beta}f\|_{L^2_{x,v}}^2\right\} &\leq C_{m_2} \mathcal{E}^2(t),
\end{align}
for some positive constants $C_{m_1}$, $C_{m_2}$, and $\delta_m$ for $0 \leq m \leq N $, since the standard argument in \cite{MR1908664,MR2000470,MR2779616} leads to  the desired result.  \newline
{\bf Proof of \eqref{want}:} Let $f$ be a local-in-time solution constructed in Theorem \ref{local}. We apply the induction argument for the momentum derivative $|\beta|=m$. For $m=0$, taking $\partial^{\alpha}$ on \eqref{perturb} and applying inner product with $\partial^{\alpha}f$ give
\begin{align*}
\frac{1}{2}\frac{d}{dt}\|\partial^{\alpha}f\|_{L^2_{x,v}}^2 = \frac{1}{\tau_0}\langle \partial^{\alpha}f, L_{Pr}\partial^{\alpha}f\rangle_{L^2_{x,v}} + \langle \partial^{\alpha}f, \partial^{\alpha}\Gamma(f)\rangle_{L^2_{x,v}}.
\end{align*}
We then apply  the coercivity estimate \eqref{full coer0} in the case $Pr>0$,  and apply Theorem \ref{full coer} in the degenerate case $Pr=0$, to obtain  
\begin{align*}
\frac{1}{2}\frac{d}{dt}\|\partial^{\alpha}f\|_{L^2_{x,v}}^2 +\delta \|\partial^{\alpha}f\|_{L^2_{x,v}}^2&\leq \langle \partial^{\alpha}f, \partial^{\alpha}\Gamma(f)\rangle_{L^2_{x,v}}.
\end{align*}
For the nonlinear term, we apply Proposition \ref{prop} to have
\begin{align*}
\langle \partial^{\alpha}f, \partial^{\alpha}\Gamma(f)\rangle_{L^2_{x,v}}	 &\leq  C\sqrt{\mathcal{E}(t)}\sum_{|\alpha_1|+|\alpha_2|\leq |\alpha|}\int_{\mathbb{T}^3}\|\partial^{\alpha_1}f\|_{L^2_{v}}\|\partial^{\alpha_2}f\|_{L^2_{v}}\|\partial^{\alpha}f\|_{L^2_{v}}dx.
\end{align*}
Without loss of generality, we assume that $|\alpha_1| \leq |\alpha_2|$, 
and employ the Sobolev embedding $H^2 \subset\subset L^{\infty}$ to get 
\begin{align*}
\langle \partial^{\alpha}f, \partial^{\alpha}\Gamma(f)\rangle_{L^2_{x,v}}	 &\leq  C\sqrt{\mathcal{E}(t)}\bigg(\sum_{|\alpha_1|\leq |\alpha|}\|\partial^{\alpha_1}f\|_{L^2_{x,v}}\|\bigg)^2 \|\partial^{\alpha}f\|_{L^2_{x,v}} \leq \mathcal{E}^2(t).
\end{align*}
Thus we obtain the following estimate of $|\beta|=0$:
\begin{align*}
\mathcal{E}^{\alpha}_0 ~:~ \frac{1}{2}\frac{d}{dt}\|\partial^{\alpha}f\|_{L^2_{x,v}}^2 +\delta \|\partial^{\alpha}f\|_{L^2_{x,v}}^2&\leq C\mathcal{E}^2(t).
\end{align*}
Now we consider the case $|\beta|=m>0$. We take $\partial^{\alpha}_{\beta}$ on \eqref{perturb}
and apply inner product with $\partial^{\alpha}_{\beta}f$.
\begin{align}\label{Eab}
\begin{split}
\mathcal{E}^{\alpha}_{\beta} ~:~ &\frac{1}{2}\frac{d}{dt}\|\partial^{\alpha}_{\beta}f\|_{L^2_{x,v}}^2 +\frac{1}{\tau_0}\|\partial^{\alpha}_{\beta}f\|_{L^2_{x,v}}^2 \cr
&\leq \sum_{i=1}^3 \langle \partial^{\alpha}_{\beta}f,  \partial^{\alpha+\bar{k}_i}_{\beta-k_i}\partial^{\alpha}_{\beta} f \rangle_{L^2_{x,v}} + \frac{1}{\tau_0}\langle \partial^{\alpha}_{\beta}f, \partial_{\beta}P_{Pr}(\partial^{\alpha}f) \rangle_{L^2_{x,v}}+ \langle \partial^{\alpha}_{\beta}f,\partial^{\alpha}_{\beta}\Gamma(f) \rangle_{L^2_{x,v}}.
\end{split}
\end{align}
The first two terms on the second line can be estimated by Young's inequality:
\begin{align*}
\langle \partial^{\alpha}_{\beta}f,  \partial^{\alpha+\bar{k}_i}_{\beta-k_i}\partial^{\alpha}_{\beta} f \rangle_{L^2_{x,v}} \leq \frac{\epsilon}{2}\|\partial^{\alpha}_{\beta}f\|_{L^2_{x,v}}^2+\frac{1}{2\epsilon} \| \partial^{\alpha+\bar{k}_i}_{\beta-k_i}f\|_{L^2_{x,v}}^2,
\end{align*}
and
\begin{align*}
\langle \partial^{\alpha}_{\beta}f, \partial_{\beta}P_{Pr}(\partial^{\alpha}f) \rangle_{L^2_{x,v}} \leq \frac{\epsilon}{2}\|\partial^{\alpha}_{\beta}f\|_{L^2_{x,v}}^2+\frac{1}{2\epsilon}\| \partial^{\alpha}f\|_{L^2_{x,v}}^2,
\end{align*}
where we used 
\begin{align*}
\| \partial_{\beta}P_{Pr}(\partial^{\alpha}f)\|_{L^2_{x,v}}^2 \leq \| \partial^{\alpha}f\|_{L^2_{x,v}}^2.
\end{align*}
To estimate the last term of \eqref{Eab}, we apply Proposition \ref{prop}.
\begin{multline*}
\mathcal{E}^{\alpha}_{\beta} ~:~ \frac{1}{2}\frac{d}{dt}\|\partial^{\alpha}_{\beta}f\|_{L^2_{x,v}}^2 +\frac{1}{\tau_0}\|\partial^{\alpha}_{\beta}f\|_{L^2_{x,v}}^2 \cr
\leq C_{\epsilon}\|\partial^{\alpha}_{\beta}f\|_{L^2_{x,v}}^2+\frac{1}{2\epsilon}\sum_{i=1}^3  \| \partial^{\alpha+\bar{k}_i}_{\beta-k_i}f\|_{L^2_{x,v}}^2 + \frac{1}{2\epsilon}\|\partial^{\alpha}f\|_{L^2_{x,v}}^2+ C\mathcal{E}^2(t).
\end{multline*}
For sufficiently small $\epsilon$, the right-hand side of $\|\partial^{\alpha}_{\beta}f\|_{L^2_{x,v}}^2$ can be absorbed in the left-hand side of that. Once we take $\displaystyle \sum_{|\alpha|+|\beta|\leq N}$ and $\displaystyle\sum_{|\beta|=m+1}$ on each side, then the 2-nd and the 3-rd terms of the second line is bounded by the induction hypothesis: 
\begin{align*}
\sum_{\substack{|\alpha|+|\beta|\leq N \cr |\beta|=m+1}} \left(\sum_{i=1}^3  \| \partial^{\alpha+\bar{k}_i}_{\beta-k_i}f\|_{L^2_{x,v}}^2 + \|\partial^{\alpha}f\|_{L^2_{x,v}}^2 \right) \leq C_{m}\sum_{\substack{|\alpha|+|\beta|\leq N \cr |\beta|\leq m}}\mathcal{E}^{\alpha}_{\beta} + C_{0}\sum_{|\alpha|\leq N }\mathcal{E}^{\alpha} \leq C\mathcal{E}^2(t).
\end{align*}
Thus we have the desired result. This completes the proof.\\

\noindent {\bf Acknowledgement:}
G.-C. Bae is supported by the National Research Foundation of Korea(NRF) grant funded by the Korea government(MSIT) (No. 2021R1C1C2094843). S.-B. Yun is supported by Samsung Science and Technology Foundation under Project Number SSTF-BA1801-02.

\section{Appendix}
In this part, we prove the conservation laws \eqref{conserv}, the $H$-theorem \eqref{Hthm}, and the 
cancellation property \eqref{rq} of the Shakhov model. We present the proof in detail for the reader's convenience.
\begin{lemma} The Shakhov model satisfies the conservation laws \eqref{conserv} and the additional cancellation property \eqref{rq}.
\end{lemma}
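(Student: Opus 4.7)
The plan is to reduce both identities to a handful of Gaussian moment integrals. The key preparatory step is to split
$$\mathcal{S}_{Pr}(F)=\mathcal{M}(F)+\mathcal{M}(F)\,\frac{1-Pr}{5}\frac{q\cdot(v-U)}{\rho T^{2}}\!\left(\frac{|v-U|^{2}}{2T}-\frac{5}{2}\right),$$
and to perform the change of variables $w=(v-U)/\sqrt{T}$, under which $\mathcal{M}(F)\,dv=\frac{\rho}{(2\pi)^{3/2}}e^{-|w|^{2}/2}\,dw$. I will write $\langle \cdot\rangle$ for integration against the standard Gaussian on $\mathbb{R}^{3}$ and use the classical moment identities $\langle w_{i}^{2}\rangle=1$, $\langle w_{i}^{4}\rangle=3$, $\langle w_{i}^{6}\rangle=15$, and $\langle w_{i}^{2}w_{j}^{2}\rangle=1$ for $i\neq j$.

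For the conservation laws \eqref{conserv}, the plan is to show that $\int \mathcal{S}_{Pr}(F)\phi\,dv=\int F\phi\,dv$ for $\phi\in\{1,v_{i},|v|^{2}\}$, which reduces the statement via time differentiation of \eqref{Shakhov} and integration in $x$. Since $\mathcal{M}(F)$ is constructed to share these five moments with $F$, the task is to show that the correction term above integrates to $0$ against each such $\phi$. Expanding $v_{i}=(v_{i}-U_{i})+U_{i}$ and $|v|^{2}=|v-U|^{2}+2U\cdot(v-U)+|U|^{2}$, every resulting integral is either odd in $w$ (and vanishes by parity) or collapses via the $\delta_{ij}$ coming from $\int w_{j}w_{i}\,d\mu$ to the single model integral $T\,\langle w_{1}^{2}(\tfrac{|w|^{2}}{2}-\tfrac{5}{2})\rangle$. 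Since $\langle w_{1}^{2}|w|^{2}\rangle=\langle w_{1}^{4}\rangle+\langle w_{1}^{2}w_{2}^{2}\rangle+\langle w_{1}^{2}w_{3}^{2}\rangle=5$, this bracket equals $\tfrac{5}{2}-\tfrac{5}{2}=0$, and all three conservation identities follow.

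For the cancellation property \eqref{rq}, I would compute $\int \mathcal{S}_{Pr}(F)(v_{i}-U_{i})|v-U|^{2}\,dv$ directly: the Maxwellian piece vanishes by parity (odd factor $w_{i}|w|^{2}$), and the correction becomes
$$\frac{1-Pr}{5}\,q_{i}\,\langle w_{1}^{2}|w|^{2}(\tfrac{|w|^{2}}{2}-\tfrac{5}{2})\rangle$$
after the change of variables and the $\delta_{ij}$ collapse. Expanding $w_{1}^{2}|w|^{4}=w_{1}^{6}+w_{1}^{2}w_{2}^{4}+w_{1}^{2}w_{3}^{4}+2w_{1}^{4}w_{2}^{2}+2w_{1}^{4}w_{3}^{2}+2w_{1}^{2}w_{2}^{2}w_{3}^{2}$ and using the moment identities gives $\langle w_{1}^{2}|w|^{4}\rangle=15+3+3+6+6+2=35$, so the bracket equals $\tfrac{35}{2}-\tfrac{25}{2}=5$, producing $(1-Pr)q_{i}$. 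Subtracting $\int F(v_{i}-U_{i})|v-U|^{2}dv=q_{i}$ (from the definition of $q$ in \eqref{macro quantity}) then yields $-Pr\,q_{i}$, as desired.

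The only real bookkeeping obstacle is the sixth-order Gaussian moment $\langle w_{1}^{2}|w|^{4}\rangle$; the rest is parity. A useful sanity check is the $Pr=1$ case, in which the correction identically vanishes and both claims collapse to standard Maxwellian identities.
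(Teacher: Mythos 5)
Your proof is correct and follows essentially the same route as the paper: a centering/rescaling change of variables reducing everything to standard Gaussian moments, parity to kill the odd terms, and explicit fourth/sixth moments yielding $\int \mathcal{S}_{Pr}(F)(v_i-U_i)|v-U|^2\,dv=(1-Pr)q_i$ before subtracting $q_i$. The only cosmetic difference is that you normalize by $\sqrt{T}$ and expand $\langle w_1^2|w|^4\rangle=35$ componentwise, whereas the paper replaces $v_i^2$ by $|v|^2/3$ by symmetry and evaluates radial integrals such as $\int|v|^4e^{-|v|^2}dv$; the computations are equivalent.
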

\begin{proof}
It is enough to show that 
\begin{align*}
&(1) ~\int_{\mathbb{R}^3}\mathcal{S}_{Pr}(F)(x,v,t)dv = \rho, \cr
&(2) ~\int_{\mathbb{R}^3}(v-U)\mathcal{S}_{Pr}(F)(x,v,t)dv = 0,\cr
&(3) ~\int_{\mathbb{R}^3}|v-U|^2\mathcal{S}_{Pr}(F)(x,v,t)dv = 3\rho T, \\
&(4) ~ \int_{\mathbb{R}^3}(\mathcal{S}_{Pr}(F)-F)(v_i-U_i)|v-U|^2dv = -Pr q_i.
\end{align*}
(1) We integrate the Shakhov operator \eqref{Shakhov operator} with respect to $dv$ and take the change of variable $(v-U) \rightarrow v$ to get 
\begin{align*}
\int_{\mathbb{R}^3}\mathcal{S}_{Pr}(F) dv = \rho+ \int_{\mathbb{R}^3}\frac{\rho}{\sqrt{2\pi T}^3}\exp\left(-\frac{|v|^2}{2T}\right)\frac{1-Pr}{5}\frac{q\cdot v}{\rho T^2}\left(\frac{|v|^2}{2T}-\frac{5}{2}\right)dv = \rho.
\end{align*}
(2) Multiplying  $\mathcal{S}_{Pr}(F)$ by $(v_i-U_i)$ and applying the change of variable $(v-U) \rightarrow v$ yield
\begin{align*}
\int_{\mathbb{R}^3}(v_i-U_i)\mathcal{S}_{Pr}(F) dv &=  \int_{\mathbb{R}^3}v_i\frac{\rho}{\sqrt{2\pi T}^3}\exp\left(-\frac{|v|^2}{2T}\right)\frac{1-Pr}{5}\frac{q\cdot v}{\rho T^2}\left(\frac{|v|^2}{2T}-\frac{5}{2}\right)dv \cr
&=\frac{1-Pr}{5}\frac{\rho}{\sqrt{2\pi T}^3}\frac{1}{\rho T^2} \int_{\mathbb{R}^3}q_iv_i^2\left(\frac{|v|^2}{2T}-\frac{5}{2}\right)\exp\left(-\frac{|v|^2}{2T}\right)dv \cr
&=\frac{1-Pr}{5}\frac{\rho}{\sqrt{2\pi T}^3}\frac{1}{\rho T^2} \frac{1}{3}\int_{\mathbb{R}^3}q_i|v|^2\left(\frac{|v|^2}{2T}-\frac{5}{2}\right)\exp\left(-\frac{|v|^2}{2T}\right)dv.
\end{align*}
We then take another change of variable $v/\sqrt{2T}\rightarrow v$ to obtain
\begin{align*}
\int_{\mathbb{R}^3}(v_i-U_i)\mathcal{S}_{Pr}(F) dv&=\frac{1-Pr}{15}\frac{q_i}{4\pi^{3/2}T^4}\left(\int_{\mathbb{R}^3}|v|^4e^{-|v|^2}dv-\frac{5}{2}\int_{\mathbb{R}^3}|v|^2e^{-|v|^2}dv\right)=0,
\end{align*}
where we used
\begin{align*}
\int_{\mathbb{R}^3}|v|^4e^{-|v|^2}dv= \frac{15\pi^{\frac{3}{2}}}{4}, \qquad 
\int_{\mathbb{R}^3}|v|^2e^{-|v|^2}dv=\frac{3\pi^{\frac{3}{2}}}{2}.
\end{align*}
(3) We integrate  $\mathcal{S}_{Pr}(F)$ with respect to $|v-U|^2dv$ and the change of variable $(v-U) \rightarrow v$: 
\begin{align*}
\int_{\mathbb{R}^3}|v-U|^2\mathcal{S}_{Pr}(F) dv &= 3\rho T + \int_{\mathbb{R}^3}|v|^2\frac{\rho}{\sqrt{2\pi T}^3}\exp\left(-\frac{|v|^2}{2T}\right)\frac{1-Pr}{5}\frac{q\cdot v}{\rho T^2}\left(\frac{|v|^2}{2T}-\frac{5}{2}\right)dv \cr
&=3\rho T.
\end{align*}
(4) Integrating the Shakhov operator $\mathcal{S}_{Pr}(F)$ with respect to $(v_i-U_i)|v-U|^2 dv$ gives
\begin{align*}
\int_{\mathbb{R}^3}  \mathcal{S}_{Pr}(F)(v_i-U_i)|v-U|^2 dv
&=\int_{\mathbb{R}^3} v_i|v|^2\left[1+\frac{1-Pr}{5}\frac{q\cdot v}{
	\rho T^2}\left(\frac{|v|^2}{2T}-\frac{5}{2}\right)\right]\frac{\rho}{\sqrt{2\pi T}^3}e^{-\frac{|v|^2}{2T}}  dv \cr
&=\frac{1-Pr}{5}\frac{q_i}{\rho T^2}\frac{\rho}{\sqrt{2\pi T}^3}\int_{\mathbb{R}^3} \left[\left(\frac{v_i^2|v|^4}{2T}-\frac{5}{2}v_i^2|v|^2\right)\right]e^{-\frac{|v|^2}{2T}}  dv \cr
&=\frac{1-Pr}{15}\frac{q_i}{\rho T^2}\frac{\rho}{\sqrt{2\pi T}^3}\int_{\mathbb{R}^3} \left[\left(\frac{|v|^6}{2T}-\frac{5}{2}|v|^4\right)\right]e^{-\frac{|v|^2}{2T}}  dv \cr
&=(1-Pr)q_i.
\end{align*}
Then by the definition of the heat flux $q_i$, we have 
\begin{align*}
\int_{\mathbb{R}^3}(\mathcal{S}_{Pr}(F)-F)(v_i-U_i)|v-U|^2dv = -Pr q_i.
\end{align*}
\end{proof}

\begin{lemma}\emph{\cite{shakhov1968generalization}} The Shakhov model satisfies the $H$-theorem \eqref{Hthm} when the distribution function $F(x,v,t)$ is sufficiently close to the global Maxwellian in the sense that
\begin{align*}
|\rho-1|+|U|+|T-1|+|q| \ll 1.
\end{align*}
\begin{remark}
We remark that the above smallness condition is satisfied by the solution derived in Theorem \ref{theorem}.
\end{remark}
\end{lemma}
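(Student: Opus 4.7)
The plan is to test \eqref{Shakhov} against $\ln F$ and integrate over $\mathbb{T}^3 \times \mathbb{R}^3$. Since $v\cdot\nabla_x F \cdot \ln F = \nabla_x\cdot(v(F\ln F - F))$ is a pure spatial divergence, the transport term vanishes by periodicity, leaving
\[
\frac{d}{dt}\int_{\mathbb{T}^3\times\mathbb{R}^3} F\ln F \,dv\,dx = \int_{\mathbb{T}^3}\frac{1}{\tau}\int_{\mathbb{R}^3}(\mathcal{S}_{Pr}(F) - F)\ln F\,dv\,dx.
\]
Since $\tau > 0$, it suffices to establish the pointwise (in $(x,t)$) velocity estimate $\int_{\mathbb{R}^3}(\mathcal{S}_{Pr}(F) - F)\ln F\,dv \leq 0$.

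I would write $\mathcal{S}_{Pr}(F) = \mathcal{M}(F)(1+\phi)$ with $\phi = \tfrac{1-Pr}{5}\tfrac{q\cdot(v-U)}{\rho T^2}\bigl(\tfrac{|v-U|^2}{2T} - \tfrac{5}{2}\bigr)$, and split the dissipation into a BGK piece $(I):=\int(\mathcal{M}(F) - F)\ln F\,dv$ and a Shakhov-correction piece $(II):=\int\mathcal{M}(F)\phi \ln F\,dv$. For $(I)$ the classical BGK argument applies: since $\ln\mathcal{M}(F)$ is a polynomial of degree $2$ in $v$ while $\mathcal{M}(F)-F$ has vanishing $(1,v,|v|^2)$ moments, $\int(\mathcal{M}(F)-F)\ln\mathcal{M}(F)\,dv = 0$, so $(I) = -\int(F-\mathcal{M}(F))(\ln F - \ln\mathcal{M}(F))\,dv \leq 0$ by monotonicity of $\log$.

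For $(II)$, the five conservation identities verified earlier for the Shakhov operator ensure that $\mathcal{M}(F)\phi = \mathcal{S}_{Pr}(F) - \mathcal{M}(F)$ also has vanishing $(1,v,|v|^2)$ moments, so subtracting $\ln\mathcal{M}(F)$ is harmless and $(II) = \int\mathcal{M}(F)\phi(\ln F - \ln\mathcal{M}(F))\,dv$. Cauchy-Schwarz gives $|(II)| \leq \sqrt{AB}$ with $A:=\int\mathcal{M}(F)\phi^2\,dv$ and $B:=\int\mathcal{M}(F)(\ln F - \ln\mathcal{M}(F))^2\,dv$. A direct Gaussian moment computation produces $A = \tfrac{(1-Pr)^2|q|^2}{10\rho T^3}$, while the representation $q_i = \int(F-\mathcal{M}(F))(v_i - U_i)(|v-U|^2 - 5T)\,dv$ (obtained from the vanishing of the same moment of the Maxwellian by symmetry) combined with Cauchy-Schwarz in $L^2(\mathcal{M}(F)^{-1}dv)$ yields $|q|^2 \leq 10\rho T^3 \int(F-\mathcal{M}(F))^2/\mathcal{M}(F)\,dv$. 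Thus $A \leq (1-Pr)^2\int(F-\mathcal{M}(F))^2/\mathcal{M}(F)\,dv$.

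Under the near-equilibrium hypothesis $|\rho - 1| + |U| + |T-1| + |q|\ll 1$ (which, as the remark notes, is strengthened by Theorem \ref{theorem} to uniform $L^2$-closeness of $F$ to $\mathcal{M}(F)$), a Taylor expansion of $\log$ about $\mathcal{M}(F)$ shows that $B$, $|(I)|$, and $\int(F-\mathcal{M}(F))^2/\mathcal{M}(F)\,dv$ are pairwise equivalent with multiplicative factors $1+o(1)$. Combining, $\sqrt{AB} \leq (|1-Pr|+o(1))|(I)|$ and $(I)+(II) \leq -\bigl(1 - |1-Pr| - o(1)\bigr)|(I)| \leq 0$ for every $Pr$ in the physical range $0 \leq Pr < 2$. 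The main obstacle is quantifying the three equivalences sharply enough to absorb the $o(1)$ remainders; this is genuinely delicate at the endpoint $Pr = 0$, where the leading-order bound saturates precisely when $F - \mathcal{M}(F)$ aligns entirely with the heat-flux mode $(v-U)(|v-U|^2-5T)\mathcal{M}(F)$, and one must either track the strict defect of the Cauchy-Schwarz step or rule out this alignment using higher-order control of the perturbation. This marginality precisely mirrors the enlarged kernel of $L_{Pr}$ at $Pr=0$ identified in Proposition \ref{dichotomy}.
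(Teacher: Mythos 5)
There is a genuine gap, in two places. First, your entire quantitative step rests on the claim that $B=\int\mathcal{M}(\ln F-\ln\mathcal{M})^2dv$, $|(I)|=\int(F-\mathcal{M})(\ln F-\ln\mathcal{M})dv$ and $D=\int(F-\mathcal{M})^2/\mathcal{M}\,dv$ are pairwise equivalent up to factors $1+o(1)$. That equivalence requires pointwise smallness of $F/\mathcal{M}(F)-1$ (with Gaussian weight), which does not follow from the lemma's hypothesis $|\rho-1|+|U|+|T-1|+|q|\ll1$: this is a purely macroscopic condition and is compatible with $F$ being microscopically far from $\mathcal{M}(F)$ (for instance, on a region where $F\ll\mathcal{M}$ one has $B/|(I)|\sim|\ln(F/\mathcal{M})|\to\infty$, so the ratios are not $1+o(1)$). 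Even invoking Theorem \ref{theorem} does not repair this: the theorem controls $f=(F-m)/\sqrt{m}$ in $H^N$, which bounds $(F-m)/\sqrt m$ but not $(F-m)/m$ pointwise, so the Taylor expansion of the logarithm you need is not justified. Second, as you yourself note, even granting the equivalences your conclusion $(I)+(II)\le-(1-|1-Pr|-o(1))|(I)|$ is vacuous exactly at $Pr=0$ (and for $Pr\ge 2$), and $Pr=0$ is the case the paper is most interested in; "track the strict defect of Cauchy--Schwarz or rule out alignment with the heat-flux mode" is precisely the missing idea, not a routine detail, since at $Pr=0$ the Cauchy--Schwarz bound saturates on the enlarged kernel of $L_{0}$ identified in Proposition \ref{dichotomy}.

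The paper's proof avoids both problems by a different splitting: it writes $\ln F=\ln\frac{F}{\mathcal{S}_{Pr}(F)}+\ln\mathcal{S}_{Pr}(F)$, so the unconditionally nonpositive term is $\int(\mathcal{S}_{Pr}(F)-F)\ln\frac{F}{\mathcal{S}_{Pr}(F)}dv\le 0$ (no smallness or pointwise closeness needed), and the leftover $\int(\mathcal{S}_{Pr}(F)-F)\ln\mathcal{S}_{Pr}(F)dv$ is evaluated by expanding $\ln\mathcal{S}_{Pr}(F)$ about $\ln\mathcal{M}$ in powers of $q$ and using the exact moment identities together with the cancellation property \eqref{rq}; this produces the explicit sign-definite leading term $-\frac{Pr(1-Pr)|q|^2}{10\rho T^3}$ plus an $O(q^2)$ remainder handled by the smallness of $q$. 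In particular, at $Pr=0$ the heat-flux moment of $\mathcal{S}_{0}(F)-F$ vanishes identically by \eqref{rq}, so there is no marginal competition between a correction term and a degenerate Cauchy--Schwarz bound to resolve. If you want to salvage your route, you would have to pair the Shakhov correction against $\mathcal{S}_{Pr}(F)-F$ (where exact cancellations are available) rather than against $F-\mathcal{M}(F)$, or else add a hypothesis of uniform pointwise closeness of $F$ to $\mathcal{M}(F)$ and treat $Pr=0$ separately.
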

\begin{proof}
We take $(1+\ln F) dvdx$ on both sides of \eqref{Shakhov}:
\begin{align*}
\frac{d}{dt}\int_{\mathbb{T}^3\times \mathbb{R}^3} F\ln F dvdx &= \int_{\mathbb{T}^3\times \mathbb{R}^3}(\mathcal{S}_{Pr}(F)-F)\ln F dvdx \cr
&= \int_{\mathbb{T}^3\times \mathbb{R}^3}(\mathcal{S}_{Pr}(F)-F)\ln \frac{F}{\mathcal{S}_{Pr}(F)} dvdx  \cr
&\quad + \int_{\mathbb{T}^3\times \mathbb{R}^3}(\mathcal{S}_{Pr}(F)-F)\ln \mathcal{S}_{Pr}(F) dvdx.
\end{align*}
Since the first term is non-positive, we only consider the second term.
We expand $\ln \mathcal{S}_{Pr}(F)$ with respect to $q$ as
\begin{align*}
\ln \mathcal{S}_{Pr}(F) = \ln \mathcal{M} + \frac{1-Pr}{5}\frac{(v-U)}{\rho T^2}\left(\frac{|v-U|^2}{2T}-\frac{5}{2}\right)\cdot q + O(q^2),
\end{align*}
so that
\begin{align*}
\int_{\mathbb{T}^3\times \mathbb{R}^3}&(\mathcal{S}_{Pr}(F)-F)\ln \mathcal{S}_{Pr}(F) dvdx \cr
&= \int_{\mathbb{T}^3\times \mathbb{R}^3}(\mathcal{S}_{Pr}(F)-F)\times \left[\ln \mathcal{M} + \frac{1-Pr}{5}\frac{q\cdot(v-U) }{\rho T^2}\left(\frac{|v-U|^2}{2T}-\frac{5}{2}\right) + O(q^2)\right]dvdx \cr
&= \int_{\mathbb{T}^3}\frac{1-Pr}{5\rho T^2}\int_{\mathbb{R}^3}(\mathcal{S}_{Pr}(F)-F)\times \left[q\cdot (v-U)\left(\frac{|v-U|^2}{2T}-\frac{5}{2}\right)\right]dvdx +O(q^2)\cr
&= \int_{\mathbb{T}^3} \frac{-Pr(1-Pr)|q|^2}{10\rho T^3}dx +O(q^2).
\end{align*}
In the last line, we used
\begin{align*}
\int_{\mathbb{R}^3}(\mathcal{S}_{Pr}(F)-F)(v-U)|v-U|^2dv = -Pr q(x,t).
\end{align*}
Since $\rho$ and $T$ have lower bounds by the assumption, for sufficiently small $q$, we have the desired result.
\end{proof}

\end{document}